\newcommand{\ie}{\emph{i.e.}}
\newcommand{\eg}{\emph{e.g.}}
\newcommand{\cf}{\emph{cf}}
\newcommand{\Nat}{\mathbb{N}}
\newcommand{\Real}{\mathbb{R}}
\newcommand{\sii}{L^2}
\newcommand{\Dom}{\mathfrak{D}}
\newcommand{\dist}{\mathop{\mathrm{dist}}\nolimits}
\newcommand{\supp}{\mathop{\mathrm{supp}}\nolimits}
\newcommand{\eps}{\varepsilon}
\newcommand{\const}{\mathrm{const}}
\newcommand{\esssup}{\mathop{\mathrm{ess\;\!sup}}}
\newcommand{\essinf}{\mathop{\mathrm{ess\;\!inf}}}
\newtheorem{Lemma}{Lemma}[section]
\newtheorem{Theorem}{Theorem}[section]
\newtheorem{Proposition}{Proposition}[section]
\newtheorem{Corollary}{Corollary}[section]
\theoremstyle{remark}
\newtheorem{Remark}{Remark}[section]
\theoremstyle{definition}
\newtheorem{Example}{Example}[section]
\numberwithin{equation}{section}
\begin{document}
%
\title{\textbf{\LARGE
The Brownian traveller on manifolds
}}
\author{
Martin Kolb\,$^{a}$
\ and \
David Krej\v{c}i\v{r}{\'\i}k\,$^{b,c}$
}
\date{\small
%
\begin{quote}
\begin{enumerate}
\item[$a)$]
\emph{Department of Statistics, University of Oxford,
1 South Parks Road, Oxford OX1 3TG, United Kingdom};
kolb@stats.ox.ac.uk
\item[$b)$]
\emph{Department of Theoretical Physics,
Nuclear Physics Institute ASCR,
25068 \v Re\v z, Czech Republic};
krejcirik@ujf.cas.cz
\item[$c)$]
\emph{IKERBASQUE, Basque Foundation for Science,
48011 Bilbao, Kingdom of Spain}
\end{enumerate}
\end{quote}
%
\ \smallskip \\
16 August 2011
}
\maketitle
\begin{abstract}
\noindent
We study the influence of the intrinsic curvature
on the large time behaviour of the heat equation
in a tubular neighbourhood of an unbounded geodesic
in a two-dimensional Riemannian manifold.
Since we consider killing boundary conditions,
there is always an exponential-type decay for the heat semigroup.
We show that this exponential-type decay is slower
for positively curved manifolds comparing to the flat case.
As the main result, we establish a sharp extra polynomial-type
decay for the heat semigroup on negatively curved manifolds
comparing to the flat case.
The proof employs the existence of Hardy-type inequalities
for the Dirichlet Laplacian in the tubular neighbourhoods
on negatively curved manifolds
and the method of self-similar variables
and weighted Sobolev spaces for the heat equation.
%
%
\end{abstract}
\newpage
\tableofcontents
%
%
\newpage
\section{Introduction}
%
The intimate intertwining between properties of Brownian motion
(or alternatively the heat flow) on a Riemannian manifold and
the curvature properties of the manifold are a classical research
question that has been investigated extensively
(see, \eg, \cite{Kendall1,Kendall2,Grig,GSL,Ne,Hsu})
and has led to deep results and new methods,
which turned out to be also of importance in other fields of mathematics.
One of the main themes here is to characterize probabilistic properties
via geometric ones and vice versa.
Thinking of the Brownian particle as a `traveller'
in a curved space we continue this line of research and investigate
the influence of the curvature on its large time behaviour.

However, in contrast to previous works,
we restrict the motion of the Brownian particle
to a tubular neighbourhood of a curve in the Riemannian manifold
and kill it when it leaves this quasi-one-dimensional subset.
This line of research seems to have its origin in the mathematical physics
literature, where one aims to describe the dynamics of quantum particles
in very thin almost one-dimensional waveguides.
The constraint on the Brownian motion to the quasi-one-dimensional
subsets leads to additional effects not present
in the case of an unrestricted stochastic conservative motion.
It particular it will turn out that the behaviour of the Brownian
particle in the tube-like set is sensitive to local perturbations
of the geometry.

A more precise description of our setting is the following.
Let the ambient space of the Brownian traveller
be a complete non-compact \emph{two-di\-men\-sion\-al}
Riemannian manifold~$\mathcal{A}$
(not necessarily embedded in the Euclidean space~$\Real^3$)
with Gauss curvature~$K$.
We restrict to the case of \emph{locally perturbed traveller}
by assuming that~$K$ is compactly supported.

We further assume that
the motion is \emph{quasi-one-dimensional}
in the sense that the Brownian traveller is forced to move
along an infinite curve~$\Gamma$ on the surface~$\mathcal{A}$.
To focus on the effects induced by the intrinsic curvature~$K$ itself,
we suppress side effects induced by the curvature of the curve
by assuming that~$\Gamma$ is a geodesic.

The constraint to move along the geodesic curve
is introduced by imposing \emph{killing boundary conditions}
on the boundary of the tubular neighbourhood
\begin{equation}\label{strip}
  \Omega := \left\{
  q\in\mathcal{A} \ | \ \dist(q,\Gamma) < a
  \right\}
  \,,
\end{equation}
where~$a$ is a positive (not necessarily small) number.
That is, the Brownian traveller `dies' whenever it hits
the boundary~$\partial\Omega$ of the strip~$\Omega$.

The problem is mathematically described by the diffusion equation
\begin{equation}\label{heat}
  \left\{
  \begin{aligned}
    \partial_t u -\Delta_q u &= 0
    &&\mbox{in} \quad \Omega \times (0,\infty)
    \,,
    \\
    u &= 0
    &&\mbox{on} \quad \partial\Omega \times (0,\infty)
    \,,
    \\
    u &= u_0
    &&\mbox{on} \quad \Omega \times \{0\}
    \,,
  \end{aligned}
  \right.
\end{equation}
in the space time variables $(q,t) \in \Omega \times (0,\infty)$,
where~$u_0$ is an initial datum.
More specifically, for the Dirac distribution $u_0(q)=\delta(q-q_0)$,
the solution $u(q,t)$~is related to the density
of the transition probability of the Brownian motion
starting at $q_0\in\Omega$ as follows.
Let us denote by $\mathbb{E}_q$ (respectively, $\mathbb{P}_q$)
the expectation (respectively, probability)
of Brownian motion $(X_t)_{t \geq 0}$ on the manifold $\mathcal{A}$
started at $q \in \mathcal{A}$ and
let $\tau_{\Omega} := \inf \lbrace t > 0 \mid X_t \in \partial \Omega\rbrace$
denote the first exit time. Then
\begin{equation}\label{e:expheat}
u(q,t)= \mathbb{E}_q \bigl[ u_0(X_t),\tau_{\Omega}>t\bigr]
\end{equation}
solves equation \eqref{heat}.
If $u_0 = \chi_B$ for some measurable set $B\subset \Omega$, we get
\begin{equation}\label{e:probheat}
u(q,t) = \mathbb{P}_q\bigl(X_t \in B,\tau_{\Omega}>t\bigr)
\,,
\end{equation}
which is
the probability that the Brownian particle survived up to time $t$
and is in~$B$ at time $t$.

Now imagine a Brownian traveller in~$\Omega$
and we imagine that he/she reached his/her goal when hitting the boundary.
The ultimate question we would like to address in this paper
is to decide \emph{which geometry is better to travel}.
By the `good geometry' we understand that which
enables the Brownian traveller to reach his/her goal
as soon as possible or `to escape from his/her starting point as far as possible'.
More precisely, we are interested in quantifying
the large time of \eqref{e:probheat} for bounded sets $B \subset \Omega_0$.

In any case, the question is related to the large time decay
of the solutions of~\eqref{heat} as regards the curvature~$K$.
We mainly study a Hilbert-space version of the problem
by analysing the asymptotic behaviour
of the heat semigroup on $\sii(\Omega)$ associated with~\eqref{heat}.
Nevertheless, we establish some pointwise results
about the large time behaviour of~$u(q,t)$ as well.

Our results are informally summarized in Table~\ref{table}.
\begin{table}[!h]
\begin{center}
\begin{tabular}{c||c|c|c}
curvature & positive & zero & negative
\\ \hline
transport & \emph{bad} & \emph{critical} & \emph{good}
\\ \hline
probability decay
& $e^{\gamma t} \, e^{-E_1 t}$
& $t^{-1/2} \, e^{-E_1 t}$
& $t^{-3/2} \, e^{-E_1 t}$
\rule{0ex}{2.5ex}
\end{tabular}
\end{center}
\caption{An informal summary of our results.}\label{table}
\end{table}

\noindent
There $E_1:=\pi^2/(2a)^2$ denotes the lowest Dirichlet eigenvalue
of the strip cross-section $(-a,a)$ and~$\gamma$ is a positive number.
As explained above, the vague statements about transport
in Table~\ref{table} should be understood in the spirit
of the large time decay of the solutions to~\eqref{heat} stated there.
It turns out that the solutions of~\eqref{heat}
has worse (respectively, better) decay properties
if~$K$ is non-negative (respectively, non-positive)
as a consequence of the existence of stationary solutions
(respectively, Hardy-type inequalities).
More general results, involving surfaces with sign changing curvatures,
are established in this paper.

The effect of curvature on the transience/recurrence of a Brownian particle
have been extensively studied (see~\cite{Grig} for a nice review).
It turned out that on manifolds with `large'
negative curvature Brownian motion leaves compact subsets
faster than on manifolds with non-negative curvature.
But local changes of the Riemannian metric cannot change transience
to recurrence or vice versa.
Observe that for the results presented in Table~\ref{table}
this is non longer true.
In probabilistic literature this  corresponds
to the $R$-recurrence/$R$-transience dichotomy (see \cite{T74a,T74b})
or in analytic literature to the
critical/subcritical dichotomy (see, \eg, \cite{Pin},
or \cite{Pinchover_2004,Pinchover_2007} for a brief overview).
Indeed, in our setting the Brownian motion
in the negatively curved tube with \textit{compactly supported}
curvature is $E_1$-transient in contrast to the case of no curvature.

The organization of this paper is as follows.
In the forthcoming Sections~\ref{Sec.Geometry} and~\ref{Sec.Analytic}
we properly define the configuration space of the Brownian traveller
and the associated heat equation~\eqref{heat}, respectively.
The case of zero curvature is briefly mentioned in Section~\ref{Sec.zero}.
In Section~\ref{Sec.vanish} we consider direct consequences
in a more general situation when the curvature vanishes at infinity.
The influence of positive curvature on the Brownian traveller
is studied in Section~\ref{Sec.positive}.
The main part of the paper consists of Section~\ref{Sec.negative},
where we establish the existence of Hardy-type inequalities
in negatively curved manifolds
and develop the method of self-similar variables
for the heat equation to reveal the subtle effect
of negative curvature.
The paper is concluded by Section~\ref{Sec.end}
where we summarize our results
and refer to some open problems.

\section{Geometric preliminaries}\label{Sec.Geometry}
%
We start by imposing some natural hypotheses to give
an instructive geometrical interpretation of
the configuration space~$\Omega$ of the Brownian traveller.
The conditions will be considerably weakened later
when we reconsider the problem in an abstract setting.

\subsection{The configuration space}
%
Let us assume that the Riemannian manifold~$\mathcal{A}$
is of class~$C^2$ and that its Gauss curvature~$K$ is continuous.
The latter holds under the additional assumption
that~$\mathcal{A}$ is either of class~$C^3$
(by Gauss's Theorema Egregium)
or that it is embedded in~$\Real^3$
(by computing principal curvatures).

Any geodesic curve~$\Gamma$ on~$\mathcal{A}$ is $C^2$-smooth
and, without loss of generality, we may assume that it is
parameterized by arc-length.
To enable the traveller to propagate to infinity,
we consider unbounded geodesics~$\Gamma$ only.
For a moment,
we make the strong hypothesis that $\Gamma:\Real\to\mathcal{A}$
is an embedding.

Since~$\Gamma$ is parameterized by arc-length,
the derivative $T:=\dot\Gamma$ defines
the unit tangent vector field along~$\Gamma$.
Let~$N$ be the unit normal vector field along~$\Gamma$
which is uniquely determined as the $C^1$-smooth mapping
from~$\Real$ to the tangent bundle of~$\mathcal{A}$
by requiring that~$N(s)$ is orthogonal to~$T(s)$
and that $\{T(s),N(s)\}$ is positively oriented for all $s\in\Real$
(\cf~\cite[Sec.~7.B]{Spivak4}).

The feature of our model is that the Brownian traveller is assumed
to be confined to the strip-like $a$-tubular neighbourhood~\eqref{strip}.
By definition, $\Omega$~is the set of points~$q$ in~$\mathcal{A}$
for which there exists a geodesic of length less than~$a$
from~$q$ meeting~$\Gamma$ orthogonally.
More precisely, we introduce a mapping~$\mathcal{L}$
from the flat strip
\begin{equation}\label{Omega0}
 \Omega_0 := \Real\times(-a,a)
\end{equation}
(considered as a subset of the tangent bundle of~$\mathcal{A}$)
to the manifold~$\mathcal{A}$ by setting
\begin{equation}\label{exp}
  \mathcal{L}(x_1,x_2) := \exp_{\Gamma(x_1)}(N(x_1) \,x_2)
  \,,
\end{equation}
where $\exp_q$ is the exponential map of~$\mathcal{A}$ at $q\in\mathcal{A}$.
Then we have
\begin{equation}\label{image}
  \Omega = \mathcal{L}(\Omega_0)
  \,.
\end{equation}
Note that $x_1\mapsto\mathcal{L}(x_1,x_2)$ traces the curves
parallel to~$\Gamma$ at a fixed distance~$|x_2|$,
while the curve~$x_2\mapsto\mathcal{L}(x_1,x_2)$
is a geodesic orthogonal to~$\Gamma$ for any fixed~$x_1$.
See Figure~\ref{Fig.Fermi}.

\begin{figure}[h!]
\begin{center}
\includegraphics[width=0.7\textwidth]{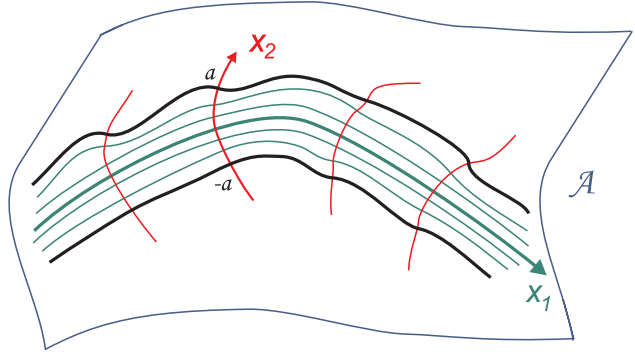}
\caption{The parametrization of the strip~$\Omega$ via the Fermi
coordinates $x=(x_1,x_2)$ defined by~\eqref{exp}.}\label{Fig.Fermi}
\end{center}
\end{figure}
%

\subsection{The Fermi coordinates}
%
Making the hypothesis that
\begin{equation}\label{Ass.basic}
  \mbox{$\mathcal{L} : \Omega_0 \to \Omega$ is a diffeomorphism}
  \;\!,
\end{equation}
we get a convenient parametrization of~$\Omega$
via the (Fermi or geodesic parallel) `coordinates'
$x=(x_1,x_2)$
determined by~\eqref{exp}, \cf~Figure~\ref{Fig.Fermi}.
We refer to~\cite[Sec.~2]{Gray} and~\cite{Hartman_1964}
for the notion and properties of Fermi coordinates.
In particular, it follows by the generalized Gauss lemma
that the metric~$G$ induced by~\eqref{exp}
acquires the diagonal form:
\begin{equation}\label{metric}
  G =
  \begin{pmatrix}
    f^2 & 0 \\
    0   & 1
  \end{pmatrix}
  \,,
\end{equation}
where~$f$ is continuous and has continuous partial derivatives
$\partial_2 f$, $\partial_2^2 f$ satisfying the Jacobi equation
\begin{equation}\label{Jacobi}
  \partial_2^2 f + \,K f = 0
  \qquad\textrm{with}\qquad\left\{
  \begin{aligned}
    f(\cdot,0) &= 1 \,, \\
    \partial_2 f(\cdot,0) &= 0 \,.
  \end{aligned}
  \right.
\end{equation}
Here~$K$ is considered as a function of the Fermi coordinates~$(x_1,x_2)$.

By the inverse function theorem, a sufficient condition
to ensure~\eqref{Ass.basic} is that~$\mathcal{L}$ is injective
and~$f$ positive. The latter can always be achieved
for sufficiently small~$a$ as the following lemma shows.
\begin{Lemma}\label{Lem.Taylor}
Let $K \in L^\infty(\Omega_0)$ and $\|K\|_\infty \;\! a^2 < 1$.
For every $x\in\Omega_0$, we have
\begin{equation}\label{bound.Taylor}
  1 - \frac{\bar{K}(x_1) \;\! a^2}{1-\bar{K}(x_1) \;\! a^2}
  \ \leq\  f(x) \ \leq\
  1 + \frac{\bar{K}(x_1) \;\! a^2}{1-\bar{K}(x_1) \;\! a^2}
  \,,
\end{equation}
where $\|\cdot\|_\infty := \|\cdot\|_{L^\infty(\Omega_0)}$ and
$$
  \bar{K}(x_1) := \esssup_{x_2\in(-a,a)} |K(x_1,x_2)|
  \,.
$$
\end{Lemma}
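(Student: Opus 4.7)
The plan is to reduce the two-variable estimate to a one-parameter family of second-order ODEs in the transverse variable $x_2$ and close each one by a Gronwall-type bootstrap. Fix $x_1 \in \Real$, abbreviate $M := \bar{K}(x_1)$, and set $g(x_2) := f(x_1, x_2)$ for $x_2 \in (-a, a)$. The Jacobi equation~\eqref{Jacobi} then reads $g'' + K(x_1, \cdot) g = 0$ with $g(0) = 1$ and $g'(0) = 0$. Integrating twice from the origin converts this into the Volterra form
\[
  g(x_2) \;=\; 1 - \int_0^{x_2}\int_0^{s} K(x_1, t)\, g(t) \, dt \, ds.
\]

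Next I would show that $F := \sup_{|t| \leq a}|g(t)|$ is bounded by $(1 - Ma^2)^{-1}$. Continuity of $g$ on $[-a, a]$, granted by ODE theory, ensures $F < \infty$ from the outset, so no circularity will arise. Using $|K(x_1, t)| \leq M$ almost everywhere on $(-a, a)$ together with the crude double bound $\bigl|\int_0^{x_2}\int_0^{s} dt\, ds\bigr| \leq a^2$ (valid since $|x_2|, |s| \leq a$) yields the pointwise inequality $|g(x_2) - 1| \leq Ma^2 F$ for every $x_2 \in (-a, a)$. Passing to the supremum on the left and applying the triangle inequality $F \leq 1 + \sup|g-1|$ produces the bootstrap $F \leq 1 + Ma^2 F$, which under the hypothesis $Ma^2 < 1$ rearranges to $F \leq (1 - Ma^2)^{-1}$.

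Substituting this a priori bound back into the pointwise estimate gives
\[
  |f(x_1, x_2) - 1| \;\leq\; \frac{Ma^2}{1 - Ma^2},
\]
which, on rewriting as a two-sided inequality and recalling that $x_1 \in \Real$ was arbitrary, is exactly~\eqref{bound.Taylor}. I do not expect a genuine obstacle; the only point requiring care is keeping the bootstrap legitimate by establishing the a priori finiteness of $F$ before dividing by $1-Ma^2$. A sharper constant with $2 - Ma^2$ in place of $1 - Ma^2$ would follow from the tighter identity $\int_0^{x_2}\int_0^{s} dt \, ds = x_2^2/2$, but the looser form in the statement has the advantage of a transparent geometric-series interpretation $\sum_{n \geq 1}(Ma^2)^n$, matching the Picard iteration of the Volterra equation above.
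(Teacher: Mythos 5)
Your proof is correct and follows essentially the same route as the paper: both reduce to a one-parameter family of problems in $x_2$ at fixed $x_1$, estimate $|f(x_1,x_2)-1|$ in terms of $a^2\bar{K}(x_1)\sup_{x_2}|f(x_1,x_2)|$, and then close the bootstrap $\bar f \leq 1 + a^2\bar K \bar f$ under the hypothesis $\|K\|_\infty a^2<1$. The only cosmetic difference is that you integrate the Jacobi equation twice to the Volterra form, whereas the paper integrates once and invokes the mean value theorem; the resulting bound and the key rearrangement are identical.
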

\begin{proof}
Integrating~\eqref{Jacobi}, we arrive at the identity
$$
  \forall x\in\Omega_0 \,, \qquad
  \partial_2 f(x) = - \int_0^{x_2} (K f)(x_1,\xi) \, d\xi
  \,.
$$
Consequently,
\begin{equation}\label{f2.Taylor}
  |\partial_2 f(x)| \leq a \, \bar{K}(x_1) \;\! \bar{f}(x_1)
  \,,
  \qquad \mbox{with} \qquad
  \bar{f}(x_1) := \sup_{\xi\in(-a,a)} |f(x_1,\xi)|
  \,,
\end{equation}
for all $x\in\Omega_0$.
By the mean value theorem, we deduce the bounds
\begin{equation}\label{bound.Taylor.bis}
  \forall x\in\Omega_0 \,, \qquad
  1 - a^2 \, \bar{K}(x_1) \, \bar{f}(x_1)
  \leq f(x) \leq
  1 + a^2 \, \bar{K}(x_1) \, \bar{f}(x_1)
  \,.
\end{equation}
Taking the supremum over $x_2\in(-a,a)$,
the upper bound leads to the upper bound of~\eqref{bound.Taylor}.
Finally, using the upper bound of~\eqref{bound.Taylor}
to estimate~$\bar{f}$ in the lower bound of~\eqref{bound.Taylor.bis},
we conclude with the lower bound of~\eqref{bound.Taylor}.
\end{proof}
%

\subsection{The abstract setting}
%
It follows from the preceding subsection that,
under the hypothesis~\eqref{Ass.basic},
we can identify $\Omega \subset \mathcal{A}$
with the Riemannian manifold $(\Omega_0,G)$.
However, the assumption~\eqref{Ass.basic}
is not really essential provided that one is ready to abandon
the geometrical interpretation of~$\Omega$
as a tubular neighbourhood embedded in~$\mathcal{A}$.

Indeed, $(\Omega_0,G)$, with the metric~$G$ determined
by~\eqref{metric} and~\eqref{Jacobi},
can be considered as an abstract Riemannian manifold
for which the boundedness of~$K$ and a restriction of~$a$
are the only important hypotheses.
More specifically, we assume
\begin{equation}\label{Ass.basic.alt}
  K \in L^\infty(\Omega_0)
  \qquad\mbox{and}\qquad
  \|K\|_\infty \;\! a^2 < \frac{1}{2}
  \,.
\end{equation}
Then the Jacobi equation~\eqref{Jacobi}
admits a solution $f(x_1,\cdot) \in H^2((-a,a))$
for every $x_1\in\Real$ and it follows from Lemma~\ref{Lem.Taylor}
that~$f$ is bounded and uniformly positive on~$\Omega_0$.

In the sequel, we therefore allow for self-intersections
and low regularity of~$\Omega$ by considering $(\Omega_0,G)$
as an abstract configuration space of the Brownian traveller.
The mere boundedness of the metric~$G$ is sufficient
to establish the desired results.

\section{Analytic and probabilistic preliminaries}\label{Sec.Analytic}
%
In this section, we give a precise meaning to
the evolution problem~\eqref{heat}.

\subsection{The generator of motion}
%
The meaning of~$-\Delta_q u$ in~\eqref{heat}
should be understood as an action of
the Laplace-Beltrami operator~$-\Delta$
in the Riemannian manifold~$\Omega$.
In the Fermi coordinates,
considering~$-\Delta$ as a differential expression in~$\Omega_0$,
we have
\begin{equation}\label{LB}
  -\Delta = - |G|^{-1/2} \partial_i |G|^{1/2} G^{ij} \partial_j
  = - f^{-1} \partial_1 f^{-1} \partial_1
  - f^{-1} \partial_2 f \partial_2
  \,.
\end{equation}
Here the first identity is a general formula for the Laplace-Beltrami
operator in a manifold equipped with the metric~$G$,
with the usual notation for the determinant $|G|:=\det(G)$
and the coefficients~$G^{ij}$ of the inverse metric~$G^{-1}$,
and using the Einstein summation convention.
The second identity employs the special form of the metric~\eqref{metric}
in the Fermi coordinates.

The objective of this subsection is to associate
to the differential expression~\eqref{LB} a self-adjoint operator~$H_K$
in the Hilbert space
\begin{equation}\label{Hilbert}
  \sii_f(\Omega_0) :=
  \sii\big(\Omega_0,f(x)\;\!dx\big)
  \,,
\end{equation}
a space isomorphic to $\sii(\Omega)$ via the Fermi coordinates.
In order to implement the Dirichlet boundary conditions of~\eqref{heat},
we introduce~$H_K$ as the Friedrichs extension of~\eqref{LB}
initially defined on smooth functions of compact support in~$\Omega_0$
(\cf~\cite[Sec.~6]{Davies}).
That is, $H_K$~is the unique self-adjoint operator associated
on~\eqref{Hilbert} with the quadratic form
\begin{equation}\label{form}
  h_K[\psi]
  := \big(\partial_i\psi,G^{ij} \partial_j\psi\big)_{f}
  \,, \qquad
  \psi\in\Dom(h_K)
  := H_0^1(\Omega_0,G)
  \,.
\end{equation}
Here $(\cdot,\cdot)_f$ denotes the inner product in~\eqref{Hilbert}
and $H_0^1(\Omega_0,G)$ denotes the completion
of $C_0^\infty(\Omega_0)$ with respect to the norm
$
  \|\cdot\|_{\Dom(h_K)} :=
  (
  h_K[\cdot] + \|\cdot\|_f^2
  )^{1/2}
$,
with~$\|\cdot\|_f$ denoting the norm in~\eqref{Hilbert}.
The dependence of~$H_K$ on the curvature~$K$
is understood through the dependence of~$f$ on~$K$,
\cf~\eqref{Jacobi}.

Under our hypothesis~\eqref{Ass.basic.alt},
it follows from Lemma~\ref{Lem.Taylor} that
$\|\cdot\|_f$~is equivalent to the usual
norm $\|\cdot\|:=\|\cdot\|_1$
in $\sii(\Omega_0)=\sii_1(\Omega_0)$ (\ie~$f=1$)
and, moreover, the $\Dom(h_K)$-norm is equivalent to the usual norm
in the Sobolev space $H^1(\Omega_0)$.
Consequently,
$$
  \Dom(h_K)=H_0^1(\Omega_0)
  \,.
$$
However, it is important to keep in mind that,
although $H^1(\Omega_0,G)$ and $H_0^1(\Omega_0)$
coincide as vector spaces, their topologies are different.

\begin{Remark}
Under extra regularity assumptions involving derivatives of~$f$,
it is possible to show that~$H_K$ acts as~\eqref{LB}
on the domain $H_0^1(\Omega_0) \cap H^2(\Omega_0)$.
However, we shall not need these facts,
always considering~$H_K$ in the form sense described above.
\end{Remark}
%

\subsection{The dynamics}\label{Sec.dynamics}
%
As usual, we consider the weak formulation of
the parabolic problem~\eqref{heat}.
We say a Hilbert space-valued function
$
  u \in \sii_\mathrm{loc}\big((0,\infty);H_0^1(\Omega_0,G)\big)
$,
with the weak derivative
$
  u' \in \sii_\mathrm{loc}\big((0,\infty);[H_0^{1}(\Omega_0,G)]^*\big)
$,
is a (global) solution of~\eqref{heat} provided that
\begin{equation}\label{heat.weak}
  \big\langle v,u'(t)\big\rangle_{\!f} + h_K\big(v,u(t)\big) = 0
\end{equation}
for each $v \in H_0^1(\Omega_0,G)$ and a.e.\ $t\in[0,\infty)$,
and $u(0)=u_0$.
Here $h_K(\cdot,\cdot)$ denotes the sesquilinear form
associated with~\eqref{form}
and $\langle\cdot,\cdot\rangle_f$
stands for the pairing of $H_0^1(\Omega_0,G)$
and its dual $[H_0^{1}(\Omega_0,G)]^*$.
With an abuse of notation, we denote by the same symbol~$u$
both the function on $\Omega_0\times(0,\infty)$
and the mapping $(0,\infty) \to H_0^1(\Omega_0,G)$.

Standard semigroup theory implies that there indeed exists
a unique solution of~\eqref{heat.weak} that belongs to
$C^0\big([0,\infty);\sii_f(\Omega_0)\big)$.
More precisely, the solution is given by $u(t) = e^{-t H_K} u_0$,
where~$e^{-t H_K}$ is the semigroup associated with~$H_K$.

It is easy to see that the real and imaginary parts
of the solution~$u$ of~\eqref{heat} evolve separately.
By writing $u = \Re(u) + i \, \Im(u)$ and solving~\eqref{heat}
with initial data $\Re(u_0)$ and $\Im(u_0)$,
we may therefore reduce the problem to the case of a real function~$u_0$,
without restriction.
This reflects the fact that~$e^{-t H_K}$ is positivity preserving.
Consequently, the functional spaces can be considered to be real
when investigating the heat equation~\eqref{heat}.

Indeed, the quadratic form $h_K$ is a Dirichlet form,
to which we can associate a strong Markov process
with continuous paths (Brownian motion on $(\Omega_0,G)$).
In order to do so let us first extend $f$ to $\mathbb{R}^2$
by setting it equal to $1$ outside $\Omega_0$.
Moreover, let us define the Dirichlet form $\tilde{h}_K$
in $L^2(\mathbb{R}^2, f(x)\;\!dx)$ by
\begin{displaymath}
  \tilde{h}_K[\psi]
  := \int_{\mathbb{R}^2}
  \overline{\partial_i\psi(x)} \;\! G^{ij}(x) \;\! \partial_j\psi(x)
  \,f(x)\,dx
  \,, \qquad
  \psi\in\Dom(\tilde{h}_K) := H^1(\mathbb{R}^2)
  \,.
\end{displaymath}
Then there exists a strong Markov process $(X_t)_{t \geq 0}$
with continuous paths, which is associated to $\tilde{h}_K$.
According to Theorem 4 in~\cite{StI} the process is conservative.
We use $\mathbb{E}_x$ (respectively, $\mathbb{P}_x$)
to denote the expectation (respectively, probability) conditional on $X_0 = x$.
Since Dirichlet boundary conditions correspond to killing
in the probabilistic picture, we have the following probabilistic
representation
\begin{equation}\label{e:probdirichletform}
e^{-tH_K}u_0 (x)=\mathbb{E}_x\bigl[u_0(X_t),\tau_{\Omega_0}>t\bigr]
\end{equation}
for almost every $x \in \Omega_0$.

\subsection{Basic properties}
%
In our first proposition we collect some fundamental properties
of the stochastic process $(X_t)_{t\geq 0}$.
\begin{Proposition}\label{p:fundprop}
Assume~\eqref{Ass.basic.alt}.
\itemize
\item The stochastic process $(X_t)_{t \geq 0}$ has the strong Feller property
and is therefore well-defined for every $x \in \Omega_0$.
In particular, the right hand side of~\eqref{e:probdirichletform}
is continuous for every $u_0 \in L^{\infty}(\Omega_0)$.
\item The stochastic process $(X_t)_{t \geq 0}$ has a continuous
transition function $k_t(\cdot,\cdot)$ with respect to $f(x)\;\!dx$,
which satisfy a Gaussian bound,
\ie\ for some constants $C_1>0$, $C_2>0$, one has
\begin{displaymath}
\forall x,y \in \Omega_0, \qquad
k_t(x,y) \leq \frac{C_1}{t} \, e^{\frac{|x-y|^2}{C_2t}}
\,.
\end{displaymath}
\end{Proposition}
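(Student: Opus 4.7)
The plan is to reduce both statements to the classical parabolic regularity theory for strictly elliptic divergence-form operators with bounded measurable coefficients on $\Real^2$. First I would observe that Lemma~\ref{Lem.Taylor} together with the strict inequality $\|K\|_\infty\;\!a^2<1/2$ from~\eqref{Ass.basic.alt} yields a uniform two-sided bound $0<c_1\leq f\leq c_2<\infty$ on $\Omega_0$; since $f$ is extended to be $1$ on $\Real^2\setminus\Omega_0$ the same bound holds on all of $\Real^2$. Because the metric~\eqref{metric} is diagonal with entries $f^2$ and $1$, the coefficients of $G$ and of $G^{-1}$ are also uniformly bounded above and away from zero. Hence
$$
\tilde h_K[\psi] = \int_{\Real^2} \bigl( f^{-1}|\partial_1\psi|^2 + f\,|\partial_2\psi|^2 \bigr)\,dx,
$$
which is equivalent to the standard Dirichlet energy on $\Real^2$, and the reference measure $f(x)\,dx$ is equivalent to Lebesgue measure. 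Thus the generator of $(X_t)_{t\geq 0}$ is a strictly elliptic second-order operator in divergence form with bounded measurable coefficients.

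In this setting the Nash--De Giorgi--Moser theory, or equivalently Aronson's theorem, applies directly. It furnishes a jointly continuous transition density $k_t(x,y)$ with respect to $f(x)\,dx$ together with the Gaussian upper bound
$$
k_t(x,y) \ \leq\ \frac{C_1}{t}\,\exp\!\left(\frac{|x-y|^2}{C_2 t}\right),
$$
with constants depending only on the ellipticity and boundedness constants above. This is exactly the content of the second item. The strong Feller property of the first item is then immediate: for any bounded measurable $u_0$ on $\Real^2$,
$$
(P_t u_0)(x) \ =\ \int_{\Real^2} k_t(x,y)\,u_0(y)\,f(y)\,dy
$$
is continuous in $x$ by dominated convergence using the Gaussian bound for integrability. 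In particular $(X_t)_{t\geq 0}$ is pointwise well-defined as a Hunt process starting from every $x\in\Real^2$, and a fortiori from every $x\in\Omega_0$.

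For the last assertion, namely continuity in $x\in\Omega_0$ of the killed expectation $\mathbb{E}_x[u_0(X_t);\tau_{\Omega_0}>t]$ appearing on the right-hand side of~\eqref{e:probdirichletform}, I would argue as follows: the killed transition kernel on $\Omega_0$ is dominated pointwise by $k_t$, so it inherits the same Gaussian upper bound and in particular local integrability; interior parabolic regularity for the same uniformly elliptic operator, applied to the Dirichlet heat kernel on $\Omega_0$, gives its joint continuity on $\Omega_0\times\Omega_0$, after which continuity of $x\mapsto\mathbb{E}_x[u_0(X_t);\tau_{\Omega_0}>t]$ follows again by dominated convergence. The argument has no real obstacle: the whole proof is a packaging of classical results once uniform ellipticity with bounded measurable coefficients has been verified. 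The only mildly delicate point is the continuity of the killed kernel, which is handled entirely by \emph{interior} parabolic regularity on $\Omega_0$ and does not require any regularity of $\partial\Omega_0$ beyond what is already encoded in the Dirichlet form.
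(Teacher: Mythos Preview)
Your proposal is correct and follows essentially the same route as the paper: both arguments verify uniform ellipticity of the divergence-form operator via the two-sided bounds on~$f$ from Lemma~\ref{Lem.Taylor}, then invoke the classical Nash--De Giorgi--Moser / Aronson machinery (the paper cites Norris~\cite{N} for the Gaussian bound and Saloff-Coste~\cite{LSC} / Sturm~\cite{StII} for local H\"older continuity) and deduce the strong Feller property by dominated convergence. The only cosmetic difference is that the paper transfers the Gaussian bound to the killed semigroup by appealing explicitly to semigroup domination (\cite{Ouh} or the probabilistic representation), whereas you phrase this directly in terms of the pointwise inequality between the killed and the free transition kernels; these are of course the same statement.
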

\begin{proof}
The first assertion follows immediately from the second one
by a standard use of Lebesgue's dominated convergence theorem.

In order to prove the second assertion,
let us denote by $\tilde{H}_K$ the unique self-adjoint operator
associated to $\tilde{h}_K$.
Observe that according to \cite[Thm.~1.1]{N}
the semigroup $e^{-t\tilde{H}_K}$ has an integral kernel,
satisfying a Gaussian upper bound.
As $e^{tH_K}$ is dominated by $e^{-t\tilde{H}_K}$
(using either \cite{Ouh} or the probabilistic representation),
this bound for $\tilde{H}_K$ carries over to $H_K$.
In order to prove the regularity assertion concerning
the transition kernel,
observe that the Dirichlet form $\tilde{h}_K$ corresponds
to a uniformly elliptic operator
(in the sense of \cite[Sec.~4]{LSC})
on the subset $\Omega_0$ of the Riemannian manifold
$\mathbb{R}^2$ with Euclidean metric.
Thus, according to the remark below Theorem 6.3 in \cite{LSC}
(compare also \cite{StII}),
it therefore follows that the transition kernel
is locally H\"older continuous.
\end{proof}

In this work we are mainly interested in the large time behaviour
of the stochastic process $(X_t)_{\tau_{\Omega_0} > t \geq 0}$,
which is well-known to be connected to spectral properties
of its generator~$H_K$. The spectral mapping theorem yields
\begin{equation}\label{SMT}
  \big\|e^{-t H_K}\big\|_{\sii_f(\Omega_0)\to\sii_f(\Omega_0)}
  = e^{-\lambda_K t}
\end{equation}
for each time $t \geq 0$, where~$\lambda_K$ denotes
the lowest point in the spectrum of~$H_K$, \ie,
$
  \lambda_K := \inf\sigma(H_K)
$.
Hence, it is important to understand the low-energy
properties of~$H_K$ in order to study the large time
behaviour of the solutions of~\eqref{heat}.

From equation \eqref{SMT} and Proposition \ref{p:fundprop}
we deduce the following result showing that the exponential rate
of decay of $\mathbb{P}_x\bigl(X_t \in B,\tau_{\Omega_0}>t\bigr)$
is given by the lowest point in the spectrum.
\begin{Proposition}\label{p:pointexpon}
Assume~\eqref{Ass.basic.alt}.
For any measurable subset $B\subset \Omega_0$
and every $x \in \Omega_0$,
\begin{displaymath}
-\lim_{t \rightarrow \infty}\frac{1}{t}
\log\mathbb{P}_x\bigl(X_t \in B,\tau_{\Omega_0}>t\bigr)
=\lambda_K.
\end{displaymath}
\end{Proposition}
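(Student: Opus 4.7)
The plan is to use the probabilistic representation~(\ref{e:probdirichletform}) to identify $u_t(x):=\mathbb{P}_x(X_t\in B,\tau_{\Omega_0}>t)=(e^{-tH_K}\chi_B)(x)$, and then to prove matching bounds $\limsup_{t\to\infty} t^{-1}\log u_t(x)\le -\lambda_K$ and $\liminf_{t\to\infty} t^{-1}\log u_t(x)\ge -\lambda_K$. Throughout we may assume that $B$ has positive $f$-measure, since otherwise $u_t\equiv 0$ by absolute continuity of the transition kernel and the stated equality is vacuous.

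For the upper bound, assume first that $B$ has finite $f$-measure. Factorise $e^{-tH_K}=e^{-H_K}\circ e^{-(t-1)H_K}$ and use~(\ref{SMT}) to get $\|e^{-(t-1)H_K}\chi_B\|_f\le e^{-\lambda_K(t-1)}|B|_f^{1/2}$; the Gaussian bound from Proposition~\ref{p:fundprop} gives $\|k_1(x,\cdot)\|_f^2=k_2(x,x)<\infty$, so Cauchy--Schwarz yields $u_t(x)\le\|k_1(x,\cdot)\|_f\,e^{-\lambda_K(t-1)}|B|_f^{1/2}$. For $B$ of infinite $f$-measure, split $B=(B\cap B_R(x))\cup(B\setminus B_R(x))$ with $R$ linear in $t$: the first piece is handled as above (its $f$-volume grows only polynomially in $t$), while the second is controlled by the Gaussian tail $\mathbb{P}_x(|X_t-x|\ge R)\le Ce^{-R^2/(C_2 t)}$ (again from Proposition~\ref{p:fundprop}), which for $R/t$ bounded below by a suitable constant is super-exponentially small compared with $e^{-\lambda_K t}$.

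For the lower bound, fix $\epsilon>0$. By the Rayleigh characterisation $\lambda_K=\inf_{\psi\neq 0} h_K[\psi]/\|\psi\|_f^2$, by the density of $C_0^\infty(\Omega_0)$ in $\Dom(h_K)$, and using that $h_K[|\psi|]\le h_K[\psi]$, we can pick a non-negative $\phi_\epsilon\in C_0^\infty(\Omega_0)$ with $\|\phi_\epsilon\|_f=1$ and $h_K[\phi_\epsilon]\le\lambda_K+\epsilon$. Jensen's inequality applied to the spectral measure $\mu_{\phi_\epsilon}$ of $\phi_\epsilon$ and to the convex function $\lambda\mapsto e^{-2t\lambda}$ then yields
\[
 (\phi_\epsilon,e^{-2tH_K}\phi_\epsilon)_f
 = \int e^{-2t\lambda}\,d\mu_{\phi_\epsilon}(\lambda)
 \ge e^{-2t\,h_K[\phi_\epsilon]}
 \ge e^{-2(\lambda_K+\epsilon)t}.
\]
Because $\phi_\epsilon$ is bounded and supported in a compact $S_\epsilon$, this forces
$\sup_{y,z\in S_\epsilon}k_{2t}(y,z)\ge c_\epsilon\,e^{-2(\lambda_K+\epsilon)t}$ for all large $t$.

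The final step is to transfer this pointwise kernel lower bound from $S_\epsilon\times S_\epsilon$ to the specific point $x$ and inside $B$. Because the metric $G$ is bounded and uniformly positive by Lemma~\ref{Lem.Taylor}, the Laplace--Beltrami operator on $\Omega_0$ is uniformly elliptic, and the standard parabolic Harnack inequality holds for positive solutions of the heat equation on compact cylinders in $\Omega_0\times(0,\infty)$. Iterating it along a Harnack chain in the first spatial variable from $S_\epsilon$ to $\{x\}$, and in the second from $S_\epsilon$ to a ball $B_r(y_0)$ where $y_0$ is a Lebesgue density point of $B$, one obtains $\inf_{z\in B_r(y_0)}k_{2t+T}(x,z)\ge c'_\epsilon\,e^{-2(\lambda_K+\epsilon)t}$ for all large $t$, with $T$ and $c'_\epsilon$ independent of $t$. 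Since $|B\cap B_r(y_0)|_f>0$ by the density-point property, integration gives $u_{2t+T}(x)\ge c''_\epsilon\,e^{-2(\lambda_K+\epsilon)t}$, whence $\liminf t^{-1}\log u_t(x)\ge -(\lambda_K+\epsilon)$; letting $\epsilon\to 0$ completes the proof. The delicate point is precisely this Harnack-chain transfer, needed because the near-optimiser $\phi_\epsilon$ has no a priori geometric relation to $x$ or $B$.
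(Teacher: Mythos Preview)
Your argument is correct. The upper bound is essentially the same as the paper's: both factorise $e^{-tH_K}=e^{-H_K}e^{-(t-1)H_K}$, use the Gaussian bound from Proposition~\ref{p:fundprop} to go from $\sii_f$ to $L^\infty$, apply the spectral bound~\eqref{SMT}, and treat unbounded $B$ by splitting off a ball of radius proportional to $t$ and controlling the tail by the Gaussian estimate. (A small quibble: the tail term is exponentially small with a rate exceeding $\lambda_K$, not ``super-exponentially'' small; and after taking $|\psi|$ one lands in $H_0^1\cap L^\infty$ with compact support rather than $C_0^\infty$, but this is all that the rest of your proof uses.)

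The lower bounds genuinely differ. The paper follows Simon~\cite{Sim2}: it perturbs $H_K-\lambda_K$ by a compactly supported potential well $W$ so as to create an isolated negative eigenvalue with non-negative ground state $\psi_0$, and then exploits the semigroup domination $e^{-t(H_K-\lambda_K-W)}\le e^{\varepsilon t}e^{-t(H_K-\lambda_K)}$ together with the convergence of $e^{-t(H_K-\lambda_K-W)}$ to the ground-state projection. Your route instead combines Jensen's inequality on the spectral measure of a near-minimiser $\phi_\varepsilon$ with the parabolic Harnack inequality (available because Lemma~\ref{Lem.Taylor} makes the Laplace--Beltrami operator uniformly elliptic on $\Omega_0$) to propagate the resulting kernel lower bound from $S_\varepsilon\times S_\varepsilon$ to $\{x\}\times B$. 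The paper's method is purely spectral and ties in directly with the $R$-recurrence/criticality language used elsewhere in the article; yours is more PDE-flavoured, avoids the perturbation trick, and would work verbatim for any symmetric uniformly elliptic generator on a domain, at the cost of invoking Moser's Harnack inequality. The uniformity of the Harnack-chain constants over $S_\varepsilon$ (needed because the maximising pair $(y_t,z_t)$ may drift with $t$) is indeed the delicate point, but it follows from compactness of $S_\varepsilon$ and connectedness of $\Omega_0$, as you indicate.
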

\begin{proof}
We apply arguments from \cite{Sim2} and \cite{Sim3}
used there in the context of Schr\"odinger operators.
First observe that the positive Sub-Markov operators
$e^{-tH_K}$ act as bounded operators on the space
$L^{\infty}(\Omega_0,f(x)\;\!dx)$ and by duality also
on $L^1(\Omega_0,f(x)\;\!dx)$. Let us set
\begin{displaymath}
\alpha_p :=
\lim_{t \rightarrow \infty}\frac{1}{t}
\log\bigl\|e^{-tH_K}\bigr\|_{L_f^p(\Omega_0)\rightarrow L_f^p(\Omega_0)}
,
\end{displaymath}
with the notation $L_f^p(\Omega_0):=L^p(\Omega_0,f(x)\;\!dx)$.
Then we have $\alpha_p=\alpha_{p'}$ ($p^{-1}+p'^{-1}=1$)
and $\alpha_p \leq \alpha_q$ $(2\leq p \leq q$).
On the other hand, using  the Gaussian bound in Proposition~\ref{p:fundprop},
we get for $\psi \in L^2_f(\Omega_0)$, $t > 2$ and some constant $C>0$,
\begin{equation*}
\begin{split}
\bigl\|e^{-tH_K}\psi\bigr\|_{L_f^{\infty}(\Omega_0)}
\leq C\,\bigl\|e^{-tH_K}\psi\bigr\|_{L^{2}_f(\Omega_0)}
\leq C\,e^{-\lambda_K(t-1)}\|\psi\|_{L^2_f(\Omega_0)}
\,.
\end{split}
\end{equation*}
Let $\psi$ denote the indicator function of the set $\Omega_0\cap B(0,r)$,
where $B(0,r)$ denotes the ball with radius~$r$ centered at~$0$.
Then we get for $x \in \Omega_0$
\begin{displaymath}
\mathbb{P}_x\bigl(X_t \in \Omega_0\cap B(0,r), \tau_{\Omega_0}>t\bigr)
= e^{-tH_K}\psi(x)
\leq C \, e^{\lambda_K}\sqrt{r}\,e^{-\lambda_Kt}.
\end{displaymath}
On the other hand we have (see also \cite[p.~429]{LSC})
for some $C_1' >0$
\begin{displaymath}
\mathbb{P}_x\bigl(X_t \in \Omega_0\cap B(0,r), \tau_{\Omega_0}>t\bigr)
= \int_{\Omega_0\cap B(0,r) }k_t(x,y)f(y)\,dy\leq e^{r^2/C_1't}.
\end{displaymath}
Choosing $r=\rho t$ with sufficiently large~$\rho$,
this finishes the proof of the upper bound
\begin{displaymath}
\limsup_{t \rightarrow \infty}\frac{1}{t}
\log\mathbb{P}_x\bigl(X_t \in B,\tau_{\Omega_0}>t\bigr) \leq \lambda_K.
\end{displaymath}
In order to proof the assertion of the Lemma we follow
the proof of Theorem~A.1.2. in \cite{Sim2}.
It is sufficient to prove that for every $\varepsilon > 0$
there exists a constant $c>0$ such that for sufficiently large $t>0$
\begin{displaymath}
\mathbb{P}_x\bigl(X_t \in B,\tau_{\Omega_0}>t\bigr)
=e^{-\lambda_K t}\chi_B (x) \geq c \;\! e^{-(\lambda_K+\varepsilon)t}.
\end{displaymath}
We set ${H}_K' := H_K-\lambda_K$.
There exists a smooth compactly supported
$\psi \in H^1_0(\Omega_0)$ with $\|\psi\|_{f}=1$
such that $h_K[\psi]-\lambda_K<\varepsilon/2$.
Let $W$ be $-\varepsilon$ on some bounded ball
containing the support of $\psi$ and $0$ otherwise.
Then the operators $H_K'-W$ and $H_K'$
have the same essential spectrum.
>From the inequality
$(\psi,(H_K'-W)\psi)_{L^2_f(\Omega_0)} < -\varepsilon/2$,
we conclude that the bottom of the spectrum $\lambda_{K,W}$
of $H_K'-W$ is a negative isolated eigenvalue
and the associated ground state~$\psi_0$ can be chosen to be non-negative.
Since $e^{-t(H_K'-W)} \leq e^{\varepsilon t} e^{-tH_K'}$,
we then arrive at ($t >1$)
\begin{equation*}
\begin{split}
e^{\lambda_{K,W} t}e^{-t_0(H_K-W)}\chi_B(x)
&= e^{\lambda_{K,W} t}
\big(
e^{-(H_K-W)1}(x,\cdot),e^{-(t-1)H_K'}\chi_B
\big)_{f} \\
&\xrightarrow[t \to \infty]{} \psi_0(x)\bigl(\psi_0,\chi_B)_{f}
\,,
\end{split}
\end{equation*}
and therefore at $C e^{-\varepsilon t}  \leq  e^{-tH_K'}\chi_B(x)$
for some constant $c>0$.
\end{proof}

A better understanding of low-energy properties of~$H_K$
leads to much more precise estimates.

\section{Flat manifolds}\label{Sec.zero}
%
We say that (a submanifold of) $\mathcal{A}$ is \emph{flat}
if its Gauss curvature~$K$ is identically equal to zero (on the submanifold).
The Brownian motion in a flat ambient space is easy to understand
because~$\Omega$ coincides with the straight Euclidean strip~$\Omega_0$,
\ie~$G$ is identity, for which the heat equation~\eqref{heat}
can be solved by separation of variables.

\subsection{Separation of variables}
%
By the `separation of variables' mentioned above we mean precisely
that the Dirichlet Laplacian $H_0 = -\Delta_D^{\Omega_0}$
on $\sii(\Omega_0)$ can be identified with the decomposed operator
\begin{equation}\label{decomposition}
  (-\Delta^\Real) \otimes 1 + 1 \otimes
  (-\Delta_{D}^{(-a,a)})
  \qquad\mbox{in}\qquad
  \sii(\Real) \otimes \sii((-a,a))
  \,.
\end{equation}
Here we denote by $-\Delta_D^U$ the Dirichlet Laplacian on $\sii(U)$
for any open Euclidean set~$U$,
suppress the subscript~$D$ if the boundary of~$U$ is empty,
and~$1$ stands for the identity operators in the appropriate spaces.
In a probabilistic language,
\eqref{decomposition}~is essentially a reformulation of the fact
that the horizontal and the vertical component of $(X_t)_{t\geq 0}$
are independent.

The eigenvalues and (normalized) eigenfunctions of~$-\Delta_{D}^{(-a,a)}$
are respectively given by ($n = 1,2,\dots$)
\begin{equation}\label{spectrum.straight}
  E_n := \left(\frac{n\pi}{2 a}\right)^2
  \,, \qquad
  \mathcal{J}_{n}(x_2)
  := \sqrt{\frac{1}{a}} \, \sin\left[ E_n (x_2+a) \right]
  \,,
\end{equation}
while the spectral resolution of~$-\Delta^\Real$
is obtained by the Fourier transform.
Then it is easy to see that the heat semigroup~$e^{-t H_0}$
is an integral operator with kernel
\begin{equation}\label{kernel.straight}
  s_0(x,x',t) := \sum_{n=1}^\infty e^{-E_n t} \,
  \mathcal{J}_n(x_2) \, p(x_1,x_1',t) \, \mathcal{J}_n(x_2')
  \,,
\end{equation}
where
$$
  p(x_1,x_1',t) := \frac{e^{-(x_1-x_1')^2/(4t)}}{\sqrt{4\pi t}}
$$
is the well known heat kernel of~$-\Delta^\Real$.

\subsection{The decay rate}
%
Concerning the large time behaviour of~$e^{-t H_0}$,
it follows from the decomposition~\eqref{decomposition} that
\begin{equation}\label{spectrum.straight.full}
  \sigma(H_0) = \sigma_\mathrm{ess}(H_0)
  = [E_1,\infty)
  \,,
\end{equation}
and therefore, as a consequence of~\eqref{SMT},
\begin{equation}\label{SMT.flat}
  \big\|e^{-t H_0}\big\|_{\sii(\Omega_0)\to\sii(\Omega_0)}
  = e^{-E_1 t}
\end{equation}
for each time $t \geq 0$.
Consequently, any solution of~\eqref{heat} satisfies
the global decay estimate
$
  \|u(t)\|
  \leq e^{-E_1 t} \;\! \|u_0\|
$
for every $t \geq 0$.

However, it is possible to obtain an extra polynomial decay
for solutions with initial data decaying sufficiently
fast at the infinity of the strip~$\Omega_0$.
To see it, let us consider the weight function
\begin{equation}\label{weight}
  w(x) := e^{x_1^2/4}
\end{equation}
and restrict the class of initial data to those~$u_0$
which belong to the weighted space
$
  \sii_w(\Omega_0)
$
defined in the same way as~\eqref{Hilbert}.
Then we have the improved decay estimate
$
  \|u(t)\|
  \leq C \;\! t^{-1/4} \;\! e^{-E_1 t} \;\! \|u_0\|_{w}
$
for every $t \geq 1$. This is a consequence of the following result.
\begin{Proposition}\label{Prop.straight}
There exists a positive constant~$C$ such that for every $t \geq 1$,
$$
  C^{-1} \, t^{-1/4} \;\! e^{-E_1 t}
  \ \leq \
  \big\|e^{-t H_0}\big\|_{\sii_w(\Omega_0) \to \sii(\Omega_0)}
  \ \leq \
  C \, t^{-1/4} \;\! e^{-E_1 t}
  \,.
$$
Moreover,
for every bounded set $B \subset \Omega_0$ and $x \in \Omega_0$
there is a constant $C_{B,x}$ such that for $t\geq 1$,
\begin{displaymath}
  C_{B,x}^{-1} \, t^{-1/2} \, e^{-E_1 t}
  \ \leq \
  \mathbb{P}_x\bigl(X_t \in B,\tau_{\Omega_0} > t\bigr)
  \ \leq \
  C_{B,x} \, t^{-1/2} \,  e^{-E_1 t}
  \,.
\end{displaymath}
\end{Proposition}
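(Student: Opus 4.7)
The plan is to exploit the explicit tensor product structure of the flat semigroup, together with the explicit heat kernel on $\Real$, and then read off the probability asymptotics from the eigenfunction expansion of $s_0$.

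\textbf{Step 1: Factorization.} Because $f\equiv 1$ when $K\equiv 0$, the decomposition \eqref{decomposition} gives $e^{-tH_0}=e^{t\Delta^{\Real}}\otimes e^{t\Delta_{D}^{(-a,a)}}$. The weight $w(x)=e^{x_1^2/4}$ depends only on $x_1$, so $\sii_w(\Omega_0)\cong \sii_w(\Real)\otimes\sii((-a,a))$, and the operator norm factors:
$$
\big\|e^{-tH_0}\big\|_{\sii_w(\Omega_0)\to\sii(\Omega_0)}
=\big\|e^{t\Delta^{\Real}}\big\|_{\sii_w(\Real)\to\sii(\Real)}\cdot\big\|e^{t\Delta_{D}^{(-a,a)}}\big\|_{\sii\to\sii}=e^{-E_1 t}\big\|e^{t\Delta^{\Real}}\big\|_{\sii_w(\Real)\to\sii(\Real)}.
$$
The entire problem is thus reduced to proving $\big\|e^{t\Delta^{\Real}}\big\|_{\sii_w(\Real)\to\sii(\Real)}\asymp t^{-1/4}$ for large $t$.

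\textbf{Step 2: One-dimensional analysis.} For the upper bound I apply the Cauchy--Schwarz inequality to $e^{t\Delta^{\Real}}g(x)=\int p(x,y,t)g(y)\,dy$ with respect to the measure $w(y)\,dy$, getting $|e^{t\Delta^{\Real}}g(x)|^2\le\|g\|_w^2\int p(x,y,t)^2 w(y)^{-1}\,dy$. Completing the square in the Gaussian exponent $-(x-y)^2/(2t)-y^2/4$ yields a closed form of the type $\frac{C}{\sqrt{t(2+t)}}\,e^{-x^2/(2(2+t))}$; integrating in $x$ then gives $\|e^{t\Delta^{\Real}}g\|^2\le C\,t^{-1/2}\|g\|_w^2$, i.e. the upper bound $C\,t^{-1/4}$. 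For the matching lower bound I test on any fixed compactly supported $g\in\sii_w(\Real)$ with $\int g\ne 0$: since $p(x,y,t)=(4\pi t)^{-1/2}e^{-(x-y)^2/(4t)}(1+o(1))$ uniformly on compacta as $t\to\infty$, one has $\|e^{t\Delta^{\Real}}g\|_{\sii(\Real)}^2\sim (8\pi t)^{-1/2}|\!\int g|^2$, yielding a lower bound of order $t^{-1/4}$. Combined with Step~1, this establishes the two-sided operator-norm estimate.

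\textbf{Step 3: Pointwise probability asymptotics.} For fixed $x\in\Omega_0$ and bounded $B\subset\Omega_0$,
$$
\mathbb{P}_x\bigl(X_t\in B,\tau_{\Omega_0}>t\bigr)=\int_B s_0(x,x',t)\,dx',
$$
and I isolate the $n=1$ contribution in the series \eqref{kernel.straight}. Because $B$ is bounded and $x$ is fixed, $p(x_1,x_1',t)=(4\pi t)^{-1/2}(1+o(1))$ uniformly for $x'\in B$, while the tail $\sum_{n\ge 2}e^{-E_n t}\mathcal{J}_n(x_2)p(x_1,x_1',t)\mathcal{J}_n(x_2')$ is controlled by $e^{-E_2 t}$ times a bounded factor, giving a relative error of order $e^{-(E_2-E_1)t}$. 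Using $\mathcal{J}_1>0$ on $(-a,a)$, so that $\mathcal{J}_1(x_2)\!\int_B\mathcal{J}_1(x_2')\,dx'>0$ whenever $|B|>0$, one obtains
$$
\mathbb{P}_x\bigl(X_t\in B,\tau_{\Omega_0}>t\bigr)=\frac{e^{-E_1 t}}{\sqrt{4\pi t}}\,\mathcal{J}_1(x_2)\!\int_B \mathcal{J}_1(x_2')\,dx'\bigl(1+o(1)\bigr),
$$
which yields the claimed two-sided bound with a constant $C_{B,x}$ depending on $x$ and $B$.

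\textbf{Anticipated obstacle.} The only delicate point is the sharpness of the $t^{-1/4}$ rate for the weighted-to-unweighted operator norm: the Cauchy--Schwarz argument is a priori only an upper bound (of Hilbert--Schmidt flavour), so one must verify via the explicit Gaussian test function that no better decay than $t^{-1/4}$ holds. Once the one-dimensional Gaussian integral is evaluated explicitly, both directions follow in a uniform manner, and the pointwise statement is then a straightforward consequence of the dominance of the lowest transverse mode in \eqref{kernel.straight}.
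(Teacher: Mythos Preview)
Your proof is correct and follows essentially the same route as the paper. The only organizational difference is that you invoke the tensor factorization $\|A\otimes B\|=\|A\|\cdot\|B\|$ up front to reduce immediately to the one-dimensional weighted estimate, whereas the paper applies Schwarz directly to the full two-dimensional kernel $s_0$ (picking up the sum $\sum_n e^{-2(E_n-E_1)t}$, which is then bounded for $t\ge1$) and only reduces to one dimension for the lower bound by testing on $\varphi(x_1)\mathcal{J}_1(x_2)$; your factorization is cleaner and avoids that sum. Step~3 is identical in spirit to the paper's argument.
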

\begin{proof}
The second assertion is a rather immediate consequence of~\eqref{kernel.straight}.
In order to see this, observe that
\begin{eqnarray}\label{e:simplebound}
\lefteqn{
  \mathbb{P}_x\bigl(X_t \in B,\tau_{\Omega_0} > t\bigr)
  = \sum_{n=1}^\infty e^{-E_n t} \,
  \mathcal{J}_n(x_2) \, \int_B p(x_1,x_1',t) \, \mathcal{J}_n(x_2')\,dx
  }
  \nonumber \\
  && = e^{-E_1 t} \, \mathcal{J}_1(x_2) \,
  \int_B p(x_1,x_1',t) \, \mathcal{J}_1(x_2')\,d(x_1',x_2')+ R_B(t,x_1,x_2)
  \,,
\end{eqnarray}
%
%
%
where $R_B(t,x_1,x_2)$
satisfies $|R_B(t,x_1,x_2)|\leq e(x_1,x_2)e^{-E_2 t}$ ($t \geq 1$)
for some locally bounded function $e:\Omega_0\rightarrow \mathbb{R}_+$.
Thus there exists $t_0=t_{0}(x_1,x_2,B)\geq 1$
such that for every $t \geq t_0$ one has
$$
  |R_B(t,x_1,x_2)|\leq \frac{1}{2} e^{-E_1 t} \,
  \mathcal{J}_1(x_2) \, \int_B p(x_1,x_1',t) \,
  \mathcal{J}_1(x_2')\,dx'.
$$
Therefore from \eqref{e:simplebound} we conclude that for $t\geq t_0$
\begin{equation*}
\begin{split}
\frac{1}{2}e^{-E_1 t} \,
  \mathcal{J}_1(x_2) \, &\int_B p(x_1,x_1',t) \, \mathcal{J}_1(x_2')\,dx'
  \leq \mathbb{P}_x\bigl(X_t \in B,\tau_{\Omega_0} > t\bigr)\\
  &\leq \frac{3}{2}e^{-E_1 t} \,
  \mathcal{J}_1(x_2) \, \int_B p(x_1,x_1',t) \, \mathcal{J}_1(x_2')\,dx',
\end{split}
\end{equation*}
which, using the explicit form of $p$,
gives the assertion for $t \geq t_0$.
Adjusting the constants $C_{B,x}$ allows to extend this to $t \geq 1$.

Let us now consider the first assertion. Using the Schwarz inequality, we get
\begin{align*}
  \big\|e^{-t H_0} u_0\big\|^2
  &\leq \|u_0\|_w^2
  \int_{\Omega_0\times\Omega_0} s_0(x,x',t)^2 \, w(x')^{-1} \, dx \, dx'
  \\
  & = \|u_0\|_w^2 \ e^{-2 E_1 t} \
  \sum_{n=1}^{\infty} e^{-2 (E_n-E_1)t}
  \int_{\Real\times\Real} p(x_1,x_1',t)^2 \, e^{-{x_1'}^2/4} \, dx_1 \, dx_1'
\end{align*}
for every $u_0 \in \sii_w(\Omega_0)$ and $t \geq 0$.
Here the sum can be estimated by a constant independent of $t \geq 1$
and the integral (computable explicitly)
is proportional to~$t^{-1/2}$.
This establishes the upper bound of the proposition.

To get the lower bound, we may restrict
to the class of initial data of the form
$u_0(x)=\varphi(x_1)\;\!\mathcal{J}_1(x_2)$ with $\varphi\in\sii_w(\Real)$
(here~$w$ is considered as a function on~$\Real$).
Then it is easy to see from~\eqref{kernel.straight} that
$$
  \big\|e^{-t H_0}\big\|_{\sii_w(\Omega_0) \to \sii(\Omega_0)}
  \geq
  e^{-E_1 t} \,
  \big\|e^{t \Delta^\Real}\big\|_{\sii_w(\Real) \to \sii(\Real)}
$$
for every $t \geq 0$.
The lower bound with~$t^{-1/4}$ is well known
for the heat semigroup of~$-\Delta^\Real$
(or can be easily established by taking
$\varphi=w^{-\alpha}$ with any $\alpha>1/2$
and evaluating the integrals with the kernel~$p$ explicitly).
\end{proof}
\begin{Remark}
It is clear from the proof that the bounds hold
in less restrictive weighted spaces.
Indeed, it is enough to have a corresponding result
for the one-dimensional heat semigroup~$e^{t \Delta^{\Real}}$.
\end{Remark}
For the following Corollary
we recall the definition of the elementary conditional probability.
If the measurable subset $B$ satisfies $\mathbb{P}_x(B)>0$,
then $\mathbb{P}_x(A \mid B) := \mathbb{P}_x(A \cap B)/\mathbb{P}_x(B)$.
The concept of conditional probabilities allows to focus
on the polynomial decay factors, as the exponential terms cancel each other.
\begin{Corollary}
Let $K=0$.
For every bounded measurable subset $B \in \Omega_0$ and every $x \in \Omega_0$ there exists a constant $c_{B,x}>0$ such that
\begin{displaymath}
c_{B,x}^{-1}\,t^{-\frac{1}{2}} \leq \mathbb{P}_x\bigl(X_t \in B \mid \tau_{\Omega_0}>t \bigr) \leq c_{B,x}\,t^{-\frac{1}{2}}
\end{displaymath}
for every $t \geq 1$.
\end{Corollary}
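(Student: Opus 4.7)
The plan is to write the conditional probability as a ratio
\[
\mathbb{P}_x\bigl(X_t \in B \mid \tau_{\Omega_0} > t\bigr)
= \frac{\mathbb{P}_x\bigl(X_t \in B,\, \tau_{\Omega_0} > t\bigr)}
       {\mathbb{P}_x\bigl(\tau_{\Omega_0} > t\bigr)},
\]
and to prove matching two-sided bounds on the numerator and the denominator so that the exponential factor $e^{-E_1 t}$ cancels and only the polynomial $t^{-1/2}$ remains. The numerator is handled directly by the second assertion of Proposition \ref{Prop.straight}, which already furnishes constants $C_{B,x}$ with $C_{B,x}^{-1}\, t^{-1/2} e^{-E_1 t} \le \mathbb{P}_x(X_t \in B,\, \tau_{\Omega_0} > t) \le C_{B,x}\, t^{-1/2} e^{-E_1 t}$ for all $t \geq 1$.

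For the denominator one cannot invoke Proposition \ref{Prop.straight} directly, because $\Omega_0$ itself is unbounded, so a short separate argument is needed. Since $K=0$, the decomposition \eqref{decomposition} shows that $X_t=(X_t^{(1)},X_t^{(2)})$ splits into an independent pair: free Brownian motion on $\Real$, which never leaves, and Brownian motion on $(-a,a)$ killed at $\pm a$. Hence $\tau_{\Omega_0}$ coincides with the exit time of the vertical component, and the one-dimensional spectral expansion in the eigenbasis \eqref{spectrum.straight} gives
\[
\mathbb{P}_x\bigl(\tau_{\Omega_0} > t\bigr)
= \sum_{n=1}^{\infty} e^{-E_n t}\,\mathcal{J}_n(x_2) \int_{-a}^{a} \mathcal{J}_n(y)\,dy.
\]
Because $\mathcal{J}_1$ is strictly positive on $(-a,a)$ with positive mean, the $n=1$ term is a strictly positive multiple of $e^{-E_1 t}$, and all remaining terms decay strictly faster (the even-indexed eigenfunctions in fact integrate to zero). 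Consequently there exist positive constants $c_1(x), c_2(x)$ with
\[
c_1(x)\, e^{-E_1 t} \le \mathbb{P}_x\bigl(\tau_{\Omega_0} > t\bigr) \le c_2(x)\, e^{-E_1 t},
\qquad t\geq 1.
\]

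Dividing the numerator bounds by the denominator bounds cancels $e^{-E_1 t}$ and produces the Corollary, with $c_{B,x}$ absorbing $C_{B,x}$, $c_1(x)$ and $c_2(x)$. I do not expect any substantive obstacle: the whole statement reduces to Proposition \ref{Prop.straight} combined with the standard one-dimensional spectral expansion for the Dirichlet Laplacian on $(-a,a)$. The only point to verify is the strict positivity $\mathcal{J}_1(x_2)>0$, which is automatic from $x \in \Omega_0$, \ie\ $x_2 \in (-a,a)$.
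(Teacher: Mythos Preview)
Your proof is correct and follows essentially the same approach as the paper: both combine Proposition~\ref{Prop.straight} for the numerator with the independence of the horizontal and vertical components (in the flat case) to identify $\tau_{\Omega_0}$ with the exit time of the one-dimensional killed process on $(-a,a)$, whose survival probability is asymptotically $\mathcal{J}_1(x_2)\int_{-a}^{a}\mathcal{J}_1\,dy\cdot e^{-E_1 t}$. The paper phrases the denominator estimate as a limit statement rather than writing out the full spectral series, but the content is the same.
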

\begin{proof}
The inequalities follow from Proposition \ref{Prop.straight}
and the fact that for every $x=(x_1,x_2) \in \Omega_0$
by independence of the horizontal and vertical components of $(X_t)$ (in the flat case)
\begin{displaymath}
\lim_{t \rightarrow \infty}e^{E_1 t}\mathbb{P}_x\bigl(\tau_{\Omega}>t\bigr)= \mathcal{J}_1(x_2)\int_{(-a,a)}\mathcal{J}_1(x_2)\,dx_2.
\end{displaymath}
From the definition of the conditional probability,
we see that the exponential cancel
and we remain with the polynomial decay as stated in the assertion.
\end{proof}
\noindent
As a consequence of this result, we get that
conditioned on not hitting the boundary $\Omega_0$
the Brownian particle will escape to infinity.

Proposition~\ref{Prop.straight} establishes the decay rate
for zero curvature as announced in Table~\ref{table}.

\subsection{The criticality of the transport}\label{Sec.critical}
%
Let us now explain what we mean by the vague statement in Table~\ref{table}
that the transport is `critical' on flat surfaces.

We say that the transport is \emph{critical}
if the spectral threshold of~$H_K$ is not
`stable against local attractive perturbations', \ie,
\begin{equation}\label{critical}
  \forall V \in C_0^\infty(\Omega_0)
  \,, \ V\not=0 \,, \ V \leq 0
  \,, \qquad
  \inf\sigma(H_K + V) < \lambda_K
  \,.
\end{equation}
Then we also say that~$H_K$ is critical.
As a consequence of the spectral mapping theorem,
we get
\begin{equation*}
  \big\|e^{-t(H_K+V)}\big\|_{\sii_f(\Omega_0)\to\sii_f(\Omega_0)}
  = e^{\gamma t} e^{-\lambda_K t}
\end{equation*}
for each time $t \geq 0$,
where $\gamma := \lambda_K-\inf\sigma(H_K + V)$ is positive.
That is, the criticality leads to an exponential slow-down
in the decay of the perturbed semigroup.

Property~\eqref{critical} is well known for~$H_0$ and is equivalent
to the fact that the first component of $(X_t)_{t\geq 0}$
-- a one-dimensional Brownian motion -- is recurrent.
For some results concerning this connection in a more abstract context
we refer to \cite{McGO}.
\begin{Proposition}\label{Prop.critical}
$H_0$ is critical.
\end{Proposition}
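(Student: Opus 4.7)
My plan is to prove criticality via the variational (min--max) principle by exhibiting, for any admissible perturbation~$V$, a test function whose Rayleigh quotient for $h_0 + V$ lies strictly below the threshold $E_1 = \lambda_0$. The guiding intuition is that $H_0$ decouples as in~\eqref{decomposition} and the longitudinal factor $-\Delta^\Real$ is itself the textbook critical operator (one-dimensional Brownian motion is recurrent). I would transplant the classical ``approximate ground state'' construction for $-\Delta^\Real$ up to the strip by tensoring with the transverse ground state $\mathcal{J}_1$.

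Concretely, I would fix a cut-off $\eta \in C_0^\infty(\Real)$ with $\eta \equiv 1$ on $[-1,1]$ and $\eta \equiv 0$ off $[-2,2]$, introduce the scaled plateaus $\varphi_n(x_1) := \eta(x_1/n)$, and set the trial function
\begin{equation*}
  \psi_n(x_1,x_2) := \varphi_n(x_1) \, \mathcal{J}_1(x_2) \in H_0^1(\Omega_0)
  \,.
\end{equation*}
Since $\mathcal{J}_1$ is normalized and satisfies $-\mathcal{J}_1'' = E_1 \mathcal{J}_1$, a short computation gives $h_0[\psi_n] = \|\varphi_n'\|^2 + E_1 \|\varphi_n\|^2$ and $\|\psi_n\|^2 = \|\varphi_n\|^2$, so the Rayleigh quotient of $h_0$ on $\psi_n$ exceeds $E_1$ exactly by $\|\varphi_n'\|^2 / \|\varphi_n\|^2$. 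The scaling forces $\|\varphi_n'\|^2 = \|\eta'\|^2 / n \to 0$ while $\|\varphi_n\|^2 \geq 2n \to \infty$; so not only does the ratio vanish, but the \emph{absolute} kinetic cost $\|\varphi_n'\|^2$ itself tends to zero.

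The perturbation contributes $(\psi_n, V \psi_n) = -c$ for every $n$ large enough that $\supp V \subset (-n,n) \times (-a,a)$, where
\begin{equation*}
  c := -\int_{\Omega_0} V(x) \, \mathcal{J}_1(x_2)^2 \, dx
  \,.
\end{equation*}
Strict positivity $c>0$ follows from $V$ being continuous, non-positive, and not identically zero, combined with $\mathcal{J}_1 > 0$ on the open cross-section $(-a,a)$. Therefore the full Rayleigh quotient becomes $E_1 + (\|\varphi_n'\|^2 - c)/\|\varphi_n\|^2 < E_1$ as soon as $\|\varphi_n'\|^2 < c$, which holds for all sufficiently large~$n$. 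The min--max principle then yields $\inf\sigma(H_0 + V) < E_1 = \lambda_0$, which is exactly~\eqref{critical}.

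I do not anticipate a serious obstacle. The only point that needs a moment's care is the comparison between the kinetic cost $\|\varphi_n'\|^2$ and the fixed overlap $c$: a merely bounded-energy sequence of cut-offs would not suffice, which is why the \emph{rescaled} plateau (rather than a fixed translated bump) is essential --- this is the analytic footprint of the recurrence/criticality of $-\Delta^\Real$ that ultimately drives the whole argument.
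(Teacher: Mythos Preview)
Your proof is correct and follows essentially the same route as the paper: both use trial functions of the form $\varphi_n(x_1)\mathcal{J}_1(x_2)$ and show the longitudinal kinetic cost $\|\varphi_n'\|^2$ vanishes while the overlap with~$V$ stays bounded away from zero. The only difference is the choice of $\varphi_n$ --- you take scaled plateau cut-offs $\eta(\cdot/n)$ whereas the paper uses scaled Gaussians $w^{-n}=e^{-n x_1^2/4}$ (a choice motivated by the weighted-space machinery appearing later in Section~\ref{Sec.negative}); either sequence does the job.
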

\begin{proof}
By the variational characterization of the spectral threshold,
it is enough to construct a test function~$\psi$
from $H_0^1(\Omega_0)$ such that
\begin{equation*}
  Q[\psi] :=
  \|\nabla \psi\|^2 - E_1 \|\psi\|^2 - \big\||V|^{1/2}\psi\big\|^2
  < 0
  \,.
\end{equation*}
For every $n\geq 1$, we define
$\psi_n(x) := \varphi_n(x_1) \;\! \mathcal{J}_1(x_2)$,
with $\varphi_n := w^{-n}$,
where~$w$ is the weight~\eqref{weight}
(considered as a function on~$\Real$).
Due to the normalization of~$\mathcal{J}_1$,
we have
$$
  Q[\psi_n] = \|\dot\varphi_n\|_{\sii(\Real)}^2
  - \|v\;\!\varphi_n\|_{\sii(\Real)}^2
  \,,
$$
where $v(x_1) := \| |V(x_1,\cdot)|^{1/2} \mathcal{J}_1\|_{\sii((-a,a))}^2$.
By hypothesis, $v \in L^1(\Real)$
and the integral $\|v\|_{L^1(\Real)}$ is positive.
Finally, an explicit calculation yields
$\|\dot\varphi_n\|_{\sii(\Real)} \sim n^{-1/4}$.
By the dominated convergence theorem, we therefore have
$$
  Q[\psi_n] \xrightarrow[n\to\infty]{} - \|v\|_{L^1(\Real)}
  \,.
$$
Consequently, taking~$n$ sufficiently large,
we can make $Q[\psi_n]$ negative.
\end{proof}

In Section~\ref{Sec.positive} we shall show that the spectrum of~$H_0$
is unstable against purely geometric deformations
characterized by positive curvature, too.

\section{Asymptotically flat manifolds}\label{Sec.vanish}
%
We say that the strip~$\Omega$ is \emph{asymptotically flat}
if its Gauss curvature~$K$ vanishes at infinity, \ie,
\begin{equation}\label{vanish}
  \lim_{|x_1| \to \infty} \esssup_{x_2\in(-a,a)} |K(x)| = 0
  \,.
\end{equation}
In this paper, we are interested in a `locally perturbed traveller'
by usually assuming a stronger hypothesis that~$K$ is compactly supported, \ie,
\begin{equation}\label{Ass.compact}
  \supp(K) \cap \Omega_0
  \quad
  \mbox{is bounded}.
\end{equation}
It follows from~\eqref{Ass.compact} that there exists
a positive~$R$ such that $K(x)=0$ for all $|x_1| > R$.
Then, as a consequence of~\eqref{Jacobi},
\begin{equation}\label{consequence.compact.initial}
  |x_1| > R
  \quad\Longrightarrow\quad
  f(x)=1
  \,.
\end{equation}
Of course, \eqref{vanish}~trivially holds for the strips
satisfying~\eqref{Ass.compact}.
Nevertheless, let us state the following result
under the more general hypothesis~\eqref{vanish}.
\begin{Theorem}\label{Thm.ess}
Assume~\eqref{Ass.basic.alt} and~\eqref{vanish}. Then
$$
  \sigma_\mathrm{ess}(H_K) = [E_1,\infty)
  \,.
$$
\end{Theorem}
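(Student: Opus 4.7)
I would establish the two inclusions $[E_1,\infty) \subseteq \sigma_\mathrm{ess}(H_K)$ and $\sigma_\mathrm{ess}(H_K) \subseteq [E_1,\infty)$ separately. The common ingredient is the observation that, by combining Lemma~\ref{Lem.Taylor} with hypothesis~\eqref{vanish}, we have
$$
  \lim_{|x_1|\to\infty} \esssup_{x_2\in(-a,a)} |f(x)-1| = 0
  \,, \qquad
  \lim_{|x_1|\to\infty} \esssup_{x_2\in(-a,a)} |\partial_2 f(x)| = 0
  \,,
$$
the second limit coming from~\eqref{f2.Taylor}. So on the two half-strips $\{|x_1|>R\}$ the metric~$G$ and its relevant derivatives approach those of the flat Euclidean strip~$\Omega_0$ uniformly as $R\to\infty$.

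\textbf{Upper bound $[E_1,\infty)\subseteq \sigma_\mathrm{ess}(H_K)$.} Fix $\lambda = E_1 + k^2$ with $k\geq 0$. Taking inspiration from the generalized eigenfunctions $e^{ikx_1}\mathcal{J}_1(x_2)$ of~$H_0$, I construct a Weyl sequence by translation and truncation. Let $\chi \in C_0^\infty(\Real)$ with $\supp\chi \subset [0,1]$ and $\|\chi\|_{\sii(\Real)}=1$, and set
$$
  \psi_n(x_1,x_2) := n^{-1/2}\,\chi\!\left(\frac{x_1-n^2}{n}\right) e^{ikx_1}\,\mathcal{J}_n^{\sharp}(x_2)
  \,, \qquad \mathcal{J}_n^{\sharp}:=\mathcal{J}_1\,,
$$
so that $\supp\psi_n \subset (n^2,n^2+n)\times(-a,a)$ marches to infinity. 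A direct computation using~\eqref{LB} and the choice of~$\chi$ shows that $\|\psi_n\|_f \to 1$, that $\psi_n \rightharpoonup 0$ in $\sii_f(\Omega_0)$, and that
$$
  (H_K-\lambda)\psi_n \xrightarrow[n\to\infty]{\sii_f(\Omega_0)} 0
  \,,
$$
because the differences $f-1$, $f^{-1}-1$, $\partial_2 f$ all vanish uniformly on $\supp\psi_n$ by the preliminary observation, and the derivatives of~$\chi((x_1-n^2)/n)$ carry the decay factor $n^{-1}$. Weyl's criterion then places~$\lambda$ in~$\sigma_\mathrm{ess}(H_K)$.

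\textbf{Lower bound $\sigma_\mathrm{ess}(H_K)\subseteq [E_1,\infty)$.} Because~$H_K$ is bounded from below, it suffices to show $\inf\sigma_\mathrm{ess}(H_K) \geq E_1$. I would use Persson's characterization
$$
  \inf\sigma_\mathrm{ess}(H_K) = \sup_{R>0} \; \inf\left\{
  \frac{h_K[\psi]}{\|\psi\|_f^2} \,:\, \psi \in C_0^\infty(\Omega_0)\,, \ \supp\psi \subset \{|x_1|>R\}
  \right\}
  \,.
$$
For any $\varepsilon>0$, pick $R$ so large that $(1-\varepsilon) \leq f(x) \leq (1+\varepsilon)$ on $\{|x_1|>R\}\cap\Omega_0$. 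For $\psi$ supported there, using~\eqref{form}, \eqref{metric} and dropping the non-negative $x_1$-derivative term,
$$
  h_K[\psi] \geq \int_{\Omega_0} |\partial_2\psi|^2\, f\,dx
  \geq (1-\varepsilon) \int_{\Omega_0} |\partial_2\psi|^2\,dx
  \geq (1-\varepsilon)\, E_1 \int_{\Omega_0}|\psi|^2\,dx
  \geq \frac{1-\varepsilon}{1+\varepsilon}\, E_1 \,\|\psi\|_f^2
  \,,
$$
where the middle step applies the one-dimensional Dirichlet Poincaré inequality $\int_{-a}^a|\partial_2\psi|^2\,dx_2 \geq E_1 \int_{-a}^a|\psi|^2\,dx_2$ slicewise via Fubini. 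Taking $R\to\infty$ (hence $\varepsilon\to 0$) and supremising yields $\inf\sigma_\mathrm{ess}(H_K)\geq E_1$.

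\textbf{Anticipated difficulty.} The arguments themselves are routine given the asymptotic flatness. The only minor technical point is the uniform-in-$x_2$ convergence $f \to 1$ and $\partial_2 f \to 0$ as $|x_1|\to\infty$, which I would derive cleanly from Lemma~\ref{Lem.Taylor} and the bound~\eqref{f2.Taylor} combined with hypothesis~\eqref{vanish}; no essentially new idea beyond what already appears in Section~\ref{Sec.Geometry} is needed.
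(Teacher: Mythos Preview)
Your strategy matches the paper's: a singular Weyl sequence of the form $\varphi_n(x_1)\,\mathcal{J}_1(x_2)\,e^{ikx_1}$ supported far out for the inclusion $[E_1,\infty)\subseteq\sigma_\mathrm{ess}(H_K)$, and a comparison with the flat strip at infinity for the reverse bound. Two points deserve comment.

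\emph{Upper bound.} You apply the classical Weyl criterion and claim $(H_K-\lambda)\psi_n\to 0$ in $\sii_f(\Omega_0)$, computing via the differential expression~\eqref{LB}. But under the standing hypothesis~\eqref{Ass.basic.alt} the curvature is merely $L^\infty$, so~$f$ need not be differentiable in~$x_1$; in particular the term $-f^{-1}\partial_1 f^{-1}\partial_1$ in~\eqref{LB} is not a priori meaningful on your test functions, and you cannot conclude $\psi_n\in\Dom(H_K)$. The paper sidesteps this by invoking the \emph{form} version of Weyl's criterion (from~\cite{DDI,KKriz}): one only needs to show that $\|(H_K-\lambda)\psi_n\|_{[\Dom(h_K)]^*}\to 0$, which requires no $x_1$-regularity of~$f$ since all derivatives can be thrown onto the test function by integration by parts. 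Your estimates on $f-1$, $f^{-1}-1$, $\partial_2 f$ are exactly what is needed once you switch to this weaker formulation; only the framing is off.

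\emph{Lower bound.} Here you use Persson's formula, whereas the paper simply cites a Neumann-bracketing argument from~\cite{Krej1}. Both are standard and essentially equivalent in this setting; your slicewise Poincar\'e estimate $h_K[\psi]\geq\frac{1-\varepsilon}{1+\varepsilon}E_1\|\psi\|_f^2$ for~$\psi$ supported in $\{|x_1|>R\}$ is clean and arguably more self-contained than quoting the external reference.
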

\begin{proof}
The fact that the threshold of the essential spectrum
does not descend below the energy~$E_1$
has been proved in~\cite[Thm.~1]{Krej1}
by means of a Neumann bracketing argument.
Let us therefore only show that $[E_1,\infty)$
belongs to the essential spectrum of~$H_K$.

Our proof is based on the Weyl criterion adapted
to quadratic forms in~\cite{DDI}
and applied to quantum waveguides in~\cite{KKriz}.
By this general characterization of essential spectrum
and since the set $[E_1,\infty)$ has no isolated points,
it is enough to find for every $\lambda \in [E_1,\infty)$
a sequence
$
  \{\psi_n\}_{n=1}^\infty \subseteq \Dom(h_K)
$
such that
\begin{itemize}
\item[(i)]
$\forall n\in\Nat\setminus\{0\}$, \quad $\|\psi_n\|_{f}=1$,
\item[(ii)]
$\big\|(H_K-\lambda)\psi_n\big\|_{[\Dom(h_K)]^*}
\xrightarrow[n\to\infty]{} 0$.
\end{itemize}
Here~$\|\cdot\|_{[\Dom(h_K)]^*}$ denotes the norm in
the dual space $[\Dom(h_K)]^*$ of~$\Dom(h_K)$
Let $n\in\Nat\setminus\{0\}$.
Given $k\in\Real$, we set $\lambda=E_1+k^2$.

Since~$\Omega$ is asymptotically flat,
a good candidate for the sequence are
plane waves in the $x_1$-direction modulated
by the ground-state eigenfunction~$\mathcal{J}_1$ in the $x_2$-direction
and `localized at infinity':
$$
  \psi_n(x) := \varphi_n(x_1) \, \mathcal{J}_1(x_2) \, e^{ikx_1}
  \,.
$$
Here $\varphi_n(x_1):^{-1/2} \varphi(x_1/n-n)$ with~$\varphi$
being a non-zero $C^\infty$-smooth function
with compact support in the interval $(-1,1)$.
Note that $\supp\varphi_n \subset (n^2-n,n^2+n)$.
We further assume that~$\varphi$
is normalized to~$1$ in $L^2(\Real)$,
so that the norm of~$\varphi_n$ is~$1$ as well.

Clearly, $\psi_n \in H_0^1(\Omega_0)=\Dom(h_K)$.
To satisfy~(i), one can redefine~$\psi_n$ by
dividing it by its norm $\|\psi_n\|_f$.
However, since
$$
  \|\psi_n\|_f^2
  \geq 1 - \frac{\|K\|_\infty \;\! a^2}{1-\|K\|_\infty \;\! a^2}
  > 0
$$
due to Lemma~\ref{Lem.Taylor} and
the normalizations of~$\varphi$ and~$\mathcal{J}_1$,
it is enough to verify the condition~(ii) directly
for our unnormalized functions~$\psi_n$.

By the definition of the dual norm, we have
\begin{equation}\label{dual}
  \big\|(H_K-\lambda)\psi_n\big\|_{[\Dom(h_K)]^*}
  = \sup_{\phi \in H_0^1(\Omega_0) \setminus\{0\}}
  \frac{\big| h_K(\phi,\psi_n)-\lambda \;\! (\phi,\psi_n)_f \big|}
  {\|\phi\|_{\Dom(h_K)}}
  \,.
\end{equation}
An explicit computation using integrations parts yields
\begin{multline*}
  h_K(\phi,\psi_n)-\lambda \;\! (\phi,\psi_n)_{f}
  = \big(\phi,
  [-\ddot\varphi_n-2ik\dot\varphi_n] \;\! \mathcal{J}_1 \;\! e^{ikx_1}
  \big)
  \\
  + \big(\partial_1\phi,[f^{-1}-1]\partial_1\psi_n\big)
  - k^2 \, \big(\phi,[f-1]\psi_n\big)
  - \big(\phi,[\partial_2 f]\partial_2\psi_n\big)
  \,.
\end{multline*}
Using the Schwarz inequality, we estimate the individual terms
on the right hand side of the identity as follows
\begin{align*}
  \big|
  \big(\phi,
  [-\ddot\varphi_n-2ik\dot\varphi_n] \;\! \mathcal{J}_1 \;\! e^{ikx_1}
  \big)
  \big|
  &\leq \|\phi\|_{\Dom(h_K)} \,
  \sqrt{\|\ddot\varphi_n\|_{L^2(\Real)}^2+4k^2\|\dot\varphi_n\|_{L^2(\Real)}^2}
  \, \|f^{1/2}\|_\infty
  \,,
  \\
  \big| \big(\partial_1\phi,[f^{-1}-1]\partial_1\psi\big) \big|
  &\leq \|\phi\|_{\Dom(h_K)} \,
  \|\dot\varphi_n\|_{\sii(\Real)} \,
  \esssup_{\supp\varphi_n} \big( f^{1/2} \, |f^{-1}-1| \big)
  \,,
  \\
  \big| \big(\phi,[f-1]\psi_n\big) \big|
  &\leq \|\phi\|_{\Dom(h_K)} \,
  \|\varphi_n\|_{\sii(\Real)} \,
  \esssup_{\supp\varphi_n} \big( f^{-1/2} \, |f-1| \big)
  \,,
  \\
  \big| \big(\phi,[\partial_2 f]\partial_2\psi_n\big) \big|
  &\leq \|\phi\|_{\Dom(h_K)} \,
  \|\varphi_n\|_{\sii(\Real)} \, E_1 \,
  \esssup_{\supp\varphi_n} \big( f^{-1/2} \, |\partial_2 f| \big)
  \,.
\end{align*}

Hence, the dual norm~\eqref{dual} can be bounded from above
by a constant multiplied by a sum of terms containing
either $\|\dot\varphi_n\|_{\sii(\Real)}$, $\|\ddot\varphi_n\|_{\sii(\Real)}$
or the suprema involving~$f$ over the support of~$\varphi_n$.
By hypothesis~\eqref{vanish},
the suprema tend to zero as $n\to\infty$ due to Lemma~\ref{Lem.Taylor}
and~\eqref{f2.Taylor}.
The remaining terms tend to zero as $n\to\infty$ because
$$
  \|\dot\varphi_n\|_{L^2(\Real)}
  = n^{-1} \, \|\dot\varphi\|_{L^2(\Real)}
  \,, \qquad
  \|\ddot\varphi_n\|_{L^2(\Real)}
  = n^{-2} \, \|\ddot\varphi\|_{L^2(\Real)}
  \,.
$$
\end{proof}

Theorem~\eqref{Thm.ess} implies that we always have $\lambda_K \leq E_1$
for asymptotically flat strips.
Therefore, as a consequence of~\eqref{SMT},
\begin{equation*}
  \big\|e^{-t H_K}\big\|_{\sii_f(\Omega_0)\to\sii_f(\Omega_0)}
  \geq e^{-E_1 t}
\end{equation*}
for each time $t \geq 0$.

\section{Positively curved manifolds}\label{Sec.positive}
%
We say that a manifold is \emph{positively curved}
if~$K$ is non-zero and non-negative
(in the sense of a measurable function on the manifold).
In this section we give a meaning to the vague statement
of Table~\ref{table} that
the `positive curvature is bad for transport'.
It is based on the following result,
which we adopt from~\cite{Krej1}.
\begin{Theorem}\label{Thm.disc}
Assume~\eqref{Ass.basic.alt} and $K \in L^1(\Omega_0)$.
We have
$$
  (\mathcal{J}_1,K\mathcal{J}_1)_f > 0
  \qquad\Longrightarrow\qquad
  \inf\sigma(H_K) < E_1
  \,.
$$
\end{Theorem}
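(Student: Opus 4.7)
The plan is to use the variational characterization of $\inf\sigma(H_K)$, namely
\[
  \inf\sigma(H_K) = \inf_{\psi \in \Dom(h_K) \setminus \{0\}}
  \frac{h_K[\psi]}{\|\psi\|_f^2}
  \,,
\]
and exhibit a sequence of test functions $\psi_n$ of the product form $\psi_n(x) := \varphi_n(x_1)\,\mathcal{J}_1(x_2)$, where $\varphi_n$ is a compactly supported cut-off on $\Real$ that spreads out as $n\to\infty$ (for instance the hat function $\varphi_n(x_1) := \max\{0,1-|x_1|/n\}$, or any family satisfying $|\varphi_n|^2 \to 1$ pointwise, $|\varphi_n|\le 1$, and $\|\dot\varphi_n\|_{\sii(\Real)} \to 0$). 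Since $\mathcal{J}_1 \in H_0^1((-a,a))$ and $\varphi_n$ is Lipschitz with compact support, $\psi_n \in H_0^1(\Omega_0) = \Dom(h_K)$.

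Substituting into~\eqref{form} and using the explicit diagonal form~\eqref{metric} of the metric,
\[
  h_K[\psi_n] = \int_{\Omega_0} f^{-1} \;\! |\dot\varphi_n|^2 \;\! \mathcal{J}_1^2 \;\! dx
  + \int_{\Omega_0} f \;\! |\varphi_n|^2 \;\! |\dot{\mathcal{J}}_1|^2 \;\! dx
  \,.
\]
The key computation is a double integration by parts in the transversal variable $x_2$ applied to the second integrand. Using $\mathcal{J}_1(\pm a) = 0$, the eigenvalue equation $-\ddot{\mathcal{J}}_1 = E_1 \;\! \mathcal{J}_1$, and then $\partial_2 f(\cdot,\pm a)$ controlled away by integrating $(\partial_2 f)\,\partial_2(\mathcal{J}_1^2)$, one arrives at
\[
  \int_{-a}^{a} f \;\! |\dot{\mathcal{J}}_1|^2 \;\! dx_2
  = E_1 \int_{-a}^{a} f \;\! \mathcal{J}_1^2 \;\! dx_2
  + \frac{1}{2} \int_{-a}^{a} (\partial_2^2 f) \;\! \mathcal{J}_1^2 \;\! dx_2
  \,,
\]
at which point the Jacobi equation~\eqref{Jacobi} replaces $\partial_2^2 f$ by $-K f$. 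The resulting identity is
\[
  h_K[\psi_n] - E_1 \;\! \|\psi_n\|_f^2
  = \int_{\Omega_0} f^{-1} \;\! \mathcal{J}_1^2 \;\! |\dot\varphi_n|^2 \;\! dx
  - \frac{1}{2} \int_{\Real} |\varphi_n(x_1)|^2 \;\! g(x_1) \;\! dx_1
  \,,
\]
where $g(x_1) := \int_{-a}^{a} K(x_1,x_2) \;\! f(x_1,x_2) \;\! \mathcal{J}_1(x_2)^2 \;\! dx_2$, so that $\int_{\Real} g \;\! dx_1 = (\mathcal{J}_1, K \mathcal{J}_1)_f > 0$ by hypothesis.

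It remains to pass to the limit. By Lemma~\ref{Lem.Taylor}, $f^{-1}$ is bounded, so the first term on the right is dominated by $\|f^{-1}\|_\infty \;\! \|\dot\varphi_n\|_{\sii(\Real)}^2 \to 0$. For the second term, the assumption $K \in L^1(\Omega_0)$ together with the boundedness of $f$ and $\mathcal{J}_1$ shows $g \in L^1(\Real)$, so the dominated convergence theorem yields $\int |\varphi_n|^2 g \;\! dx_1 \to (\mathcal{J}_1, K \mathcal{J}_1)_f$. Hence
\[
  \limsup_{n\to\infty} \bigl( h_K[\psi_n] - E_1 \;\! \|\psi_n\|_f^2 \bigr)
  \leq -\tfrac{1}{2}\,(\mathcal{J}_1, K \mathcal{J}_1)_f < 0
  \,,
\]
so $h_K[\psi_n] - E_1 \;\! \|\psi_n\|_f^2 < 0$ for all sufficiently large $n$, giving $\inf\sigma(H_K) < E_1$. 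The only subtle point is the double integration by parts connecting the transverse Dirichlet-eigenvalue term to the curvature via the Jacobi equation; once this identity is in hand, the rest is a routine cut-off argument in the longitudinal direction.
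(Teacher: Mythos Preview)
Your proof is correct and follows essentially the same route as the paper's: a trial function of the form $\varphi_n(x_1)\mathcal{J}_1(x_2)$, the key identity $h_K[\psi_n]-E_1\|\psi_n\|_f^2 = (\partial_1\psi_n,f^{-1}\partial_1\psi_n)-\tfrac12(\psi_n,K\psi_n)_f$ obtained via integration by parts and the Jacobi equation, and then $\|\dot\varphi_n\|_{\sii(\Real)}\to 0$ plus dominated convergence. The only cosmetic differences are that the paper uses the Gaussians $\varphi_n=e^{-nx_1^2/4}$ (borrowed from the proof of Proposition~\ref{Prop.critical}) instead of your compactly supported hat functions, and it states the key identity without spelling out the two integrations by parts that you make explicit.
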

\begin{Remark}\label{Rem.burden}
Recall that~$\mathcal{J}_1$ is the first transverse eigenfunction
introduced in~\eqref{spectrum.straight}.
Here, not to burden the notation,
we denote by the same symbol~$\mathcal{J}_1$
the function $x\mapsto\mathcal{J}_1(x_2)$ on~$\Omega_0$.
\end{Remark}
\begin{proof}
The proof of the theorem is very similar
to that of Proposition~\ref{Prop.critical}.
By the variational characterization of the spectral threshold of~$H_K$,
it is enough to construct a test function~$\psi$
from $H_0^1(\Omega_0)$ such that
\begin{equation*}
  Q_K[\psi] :=
  h_K[\psi] - E_1 \|\psi\|_f^2
  < 0
  \,.
\end{equation*}
Using the same sequence of functions
$\psi_n(x) = \varphi_n(x_1)\mathcal{J}_1(x_2)$
as in the proof of Proposition~\ref{Prop.critical},
we arrive at
\begin{equation}\label{2integrals}
  Q_K[\psi_n] = (\partial_1\psi_n,f^{-1}\partial_1\psi_n)
  - \frac{1}{2} \, (\psi_n,K\psi_n)_f
  \,.
\end{equation}
Here the first (positive) integral on the right hand side
vanishes as $n\to\infty$ because
$$
  (\partial_1\psi_n,f^{-1}\partial_1\psi_n)
  \leq \frac{1-\|K\|_\infty \;\! a^2}{1-2\;\!\|K\|_\infty \;\! a^2}
  \ \|\dot\varphi_n\|_{\sii(\Real)}^2
  \,,
$$
due to Lemma~\ref{Lem.Taylor} and
the normalization of~$\mathcal{J}_1$,
and $\|\dot\varphi_n\|_{\sii(\Real)} \sim n^{-1/4}$.
Using, at the same time, the dominated convergence theorem
in the second integral on the right hand side of~\eqref{2integrals},
we finally get
$$
  Q_K[\psi_n] \xrightarrow[n\to\infty]{}
  - \frac{1}{2} \, (\mathcal{J}_1,K\mathcal{J}_1)_f
  \,.
$$
Since the limit is negative by hypothesis,
we can make $Q[\psi_n]$ negative
by taking~$n$ sufficiently large.
\end{proof}
\begin{Remark}
The integrability of~$K$ is just a technical assumption
in Theorem~\ref{Thm.disc}. It is only important to give
a meaning to the integral $(\mathcal{J}_1,K\mathcal{J}_1)_f$,
the value~$+\infty$ being admissible in principle.
For instance, it is enough to assume that~$K$ is non-trivial
and non-negative on~$\Omega_0$ for the present proof to work.
\end{Remark}

Combining Theorem~\ref{Thm.disc} with Theorem~\ref{Thm.ess},
we get that~$H_K$ possesses at least one discrete eigenvalue
below the essential spectrum under the hypotheses.
In view of the criticality notion introduced in Section~\ref{Sec.critical},
the result of Theorem~\ref{Thm.disc} can be also interpreted
in the sense that~$H_0$ is not stable against geometric
perturbations characterized by the presence of positive curvature.

In any case, regardless of whether the spectral threshold of~$H_K$
represents an eigenvalue or the bottom of the essential spectrum,
Theorem~\ref{Thm.disc} implies that the gap $\gamma:=E_1-\lambda_K$
is always positive for positively curved strips. If $K$ vanishes at
infinity, then the bottom of the spectrum has to be an isolated eigenvalue.
Therefore, as a consequence of~\eqref{SMT} and \cite{Sim}, we conclude with
\begin{Corollary}\label{c:cor-pos-curv}
Assume~\eqref{Ass.basic.alt}, $K \in L^1(\Omega_0)$
and $(\mathcal{J}_1,K\mathcal{J}_1)_f > 0$.
Then
\begin{equation*}
  \big\|e^{-t H_K}\big\|_{\sii_f(\Omega_0)\to\sii_f(\Omega_0)}
  = e^{\gamma t} \, e^{-E_1 t}
\end{equation*}
for each time $t \geq 0$, where~$\gamma$ is positive.
Moreover, if additionally \eqref{vanish} is satisfied then there exists
a unique non-negative normalized
$\phi_0 \in L^2_f(\Omega_0)$ such that
for every bounded measurable set $B \subset \Omega_0$ and every $x \in \Omega_0$
\begin{displaymath}
\lim_{t \rightarrow \infty}e^{-(\gamma-E_1) t} \,
\mathbb{P}_x \bigl(X_t \in B, \tau_{\Omega_0} > t\bigr)
=  \phi_0(x)\int_{B} \phi_{0}(y)f(y)\,dy
  \,.
\end{displaymath}
\end{Corollary}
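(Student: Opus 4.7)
The two assertions are of different natures and I would handle them separately. For the norm identity, one simply assembles what is already on the table. The hypotheses of the Corollary are exactly those of Theorem~\ref{Thm.disc}, so $\lambda_K := \inf\sigma(H_K) < E_1$ and $\gamma := E_1 - \lambda_K > 0$. The spectral mapping identity~\eqref{SMT} then gives
$$
\|e^{-tH_K}\|_{\sii_f(\Omega_0) \to \sii_f(\Omega_0)} = e^{-\lambda_K t} = e^{\gamma t} e^{-E_1 t}
$$
for all $t \geq 0$, which is the first assertion.

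For the pointwise large-time asymptotic, my first step is to confirm that $\lambda_K$ is a simple isolated eigenvalue with a strictly positive eigenfunction. Under the additional hypothesis~\eqref{vanish}, Theorem~\ref{Thm.ess} gives $\sigma_{\mathrm{ess}}(H_K) = [E_1,\infty)$, so $\lambda_K < E_1$ forces $\lambda_K$ to be a discrete eigenvalue of finite multiplicity. Its simplicity and the existence of a normalised non-negative eigenfunction $\phi_0 \in \sii_f(\Omega_0)$ follow from the standard Perron--Frobenius argument for positivity-improving semigroups, whose applicability here is guaranteed by the strictly positive integral kernel of $e^{-tH_K}$ (Gaussian bound of Proposition~\ref{p:fundprop} combined with irreducibility on the connected set $\Omega_0$). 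The semigroup asymptotic of reference~\cite{Sim}, which rests on the spectral gap $E_1 - \lambda_K > 0$, then yields
$$
e^{\lambda_K t}\, e^{-t H_K} u_0 \ \longrightarrow\ (\phi_0, u_0)_f\, \phi_0 \qquad (t \to \infty)
$$
for suitable $u_0$. Taking $u_0 = \chi_B$ and using the probabilistic representation~\eqref{e:probdirichletform}, this gives the stated limit for almost every $x \in \Omega_0$ (note that $-(\gamma - E_1) = \lambda_K$).

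The main obstacle, and in my view the only delicate point, is upgrading this almost-everywhere convergence to convergence at \emph{every} $x \in \Omega_0$. I would exploit the Markov property at time~$1$ to write
$$
\mathbb{P}_x\bigl(X_t \in B,\, \tau_{\Omega_0} > t\bigr) = \mathbb{E}_x\bigl[\mathbb{P}_{X_1}\bigl(X_{t-1} \in B,\, \tau_{\Omega_0} > t - 1\bigr),\, \tau_{\Omega_0} > 1\bigr]
$$
and then combine the strong Feller property from Proposition~\ref{p:fundprop} with continuity of $\phi_0$ (from local elliptic regularity applied to the eigenvalue equation $H_K \phi_0 = \lambda_K \phi_0$) to pass the limit under the expectation by dominated convergence. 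A uniform-in-$t$ integrable majorant for $e^{\lambda_K t} \mathbb{P}_y(X_{t-1} \in B, \tau_{\Omega_0} > t - 1)$ is supplied by Proposition~\ref{p:pointexpon} together with the Gaussian kernel bound of Proposition~\ref{p:fundprop}, which closes the argument.
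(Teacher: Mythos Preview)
Your proposal is correct and follows exactly the route the paper indicates: the norm identity from Theorem~\ref{Thm.disc} combined with~\eqref{SMT}, and the pointwise limit from the fact that $\lambda_K$ is an isolated simple eigenvalue (via Theorem~\ref{Thm.ess} and positivity improvement) together with the heat-kernel asymptotic of~\cite{Sim}. The paper gives no formal proof beyond citing~\eqref{SMT} and~\cite{Sim}; your extra care about upgrading a.e.\ convergence to every~$x$ is in fact already handled by~\cite{Sim}, which yields locally uniform convergence $e^{\lambda_K t}k_t(x,y)\to\phi_0(x)\phi_0(y)$ at the kernel level, so the Markov-property workaround, while valid, is not needed.
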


That is, the presence of positive curvature clearly slows down the decay
of the heat semigroup, even without the need to work with
the weighted space $\sii_{wf}(\Omega_0)$. A Brownian traveller should avoid
`mountains' satisfying $(\mathcal{J}_1,K\mathcal{J}_1)_f > 0$,
if he/she wants to make sure that
he/she is able to reach is goal early
and wants to avoid spending too much time in a given bounded region.

The following Corollary
(again a rather direct consequence of Theorems~\ref{Thm.disc} and~\ref{Thm.ess})
shows that in contrast to the flat case the Brownian traveller
-- conditioned on not-hitting the boundary $\partial \Omega_0$ --
might not have been able to have left a bounded region forever.
\begin{Corollary}
Assume that~\eqref{vanish}
and the conditions of Theorem \ref{Thm.disc} are satisfied.
Then for almost every $x \in \Omega_0$ we have
\begin{displaymath}
\lim_{t \rightarrow \infty}
\mathbb{P}_x\bigl(X_t \in \cdot \mid \tau_{\Omega_0}>t\bigr)
= \frac{\phi_0(y)f(y)\,dy}{\int_{\Omega_0}\phi_0(y)f(y)\,dy},
\end{displaymath}
where the convergence is with respect to the total variation distance.
\end{Corollary}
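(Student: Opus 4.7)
The plan is to express the conditional law $\mathbb{P}_x(X_t \in \cdot \mid \tau_{\Omega_0}>t)$ via its density
$\rho_t^x(y) := k_t(x,y)/\mathbb{P}_x(\tau_{\Omega_0}>t)$
with respect to $f(y)\,dy$, compare it to the candidate limit
$\rho_\infty(y) := \phi_0(y)/\int_{\Omega_0}\phi_0(y)f(y)\,dy$,
and invoke \emph{Scheff\'e's lemma}: since both are probability densities on $(\Omega_0,f(y)\,dy)$, it suffices to show pointwise a.e.\ convergence $\rho_t^x(y)\to\rho_\infty(y)$ in $y$, from which $L^1_f$-convergence follows, which is twice the total-variation distance. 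I would split $\rho_t^x$ into numerator and denominator, each pre-multiplied by $e^{\lambda_K t}$, and analyse the two factors separately.

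Under the hypotheses, Theorems~\ref{Thm.ess} and~\ref{Thm.disc} together with a Perron--Frobenius argument applied to the positivity-preserving semigroup $e^{-tH_K}$ make $\lambda_K=\inf\sigma(H_K)$ an isolated \emph{simple} eigenvalue with strictly positive normalized ground state $\phi_0\in L^2_f(\Omega_0)$, so the spectral gap $\delta := \inf\bigl(\sigma(H_K)\setminus\{\lambda_K\}\bigr)-\lambda_K$ is strictly positive. Writing $P_0:=\phi_0\,(\phi_0,\cdot)_f$, the spectral estimate $\|e^{-tH_K}(I-P_0)\|_{L^2_f\to L^2_f}\leq e^{-(\lambda_K+\delta)t}$ combined with the $L^2_f\to L^\infty_f$ smoothing of $e^{-H_K}$ inherited from the Gaussian bound of Proposition~\ref{p:fundprop} (a standard ultracontractivity sandwich on $e^{-H_K}e^{-(t-2)H_K}(I-P_0)e^{-H_K}$) yields the uniform kernel bound
\[
\sup_{x,y\in\Omega_0}\bigl|e^{\lambda_K t}k_t(x,y)-\phi_0(x)\phi_0(y)\bigr| \xrightarrow[t\to\infty]{} 0,
\]
which handles the numerator pointwise.

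For the denominator $e^{\lambda_K t}\mathbb{P}_x(\tau_{\Omega_0}>t)$, Corollary~\ref{c:cor-pos-curv} already gives $e^{\lambda_K t}\mathbb{P}_x(X_t\in B,\tau_{\Omega_0}>t)\to \phi_0(x)\int_B\phi_0 f\,dy$ for every bounded Borel set $B$, and Agmon-type exponential decay of $\phi_0$ (available since $\lambda_K<E_1=\inf\sigma_{\mathrm{ess}}(H_K)$) places $\phi_0$ in $L^1_f(\Omega_0)$, making $\int_{\Omega_0}\phi_0 f\,dy$ finite. What is still missing is the tightness
\[
\lim_{R\to\infty}\limsup_{t\to\infty} e^{\lambda_K t}\,\mathbb{P}_x\bigl(X_t\in B_R^c,\,\tau_{\Omega_0}>t\bigr)=0, \qquad B_R:=\{|x_1|<R\},
\]
which I expect to be the main technical obstacle since naive uniform kernel bounds are not integrable against $f\,dy$ on the infinite strip. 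The natural route exploits the compactness of $\supp(K)$: for $R_0$ large the Dirichlet form agrees with the flat one outside $B_{R_0}$, so a strong-Markov decomposition at the first exit of $X_t$ from $B_{R_0}$ bounds the tail contribution by the flat Dirichlet survival probability, which decays as $t^{-1/2}e^{-E_1 t}$ by Proposition~\ref{Prop.straight}; since $E_1>\lambda_K$, this more than absorbs the $e^{\lambda_K t}$ prefactor.

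Assembling the two limits gives $\rho_t^x(y)\to\rho_\infty(y)$ pointwise in $y$ for every $x$ with $\phi_0(x)>0$, hence for almost every $x\in\Omega_0$, and Scheff\'e's lemma promotes this to $L^1_f$-convergence of densities, equivalently to convergence of the conditional laws in total variation.
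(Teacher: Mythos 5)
Your overall strategy — reduce to pointwise a.e.\ convergence of the conditional density and invoke Scheff\'e's lemma — is a legitimate and genuinely different route from the paper's. The paper instead establishes $\phi_0\in L^1_f(\Omega_0)$ via $L^p$ spectral independence (Davies, Hempel--Voigt), identifies the killed process as $\lambda_K$-positive recurrent in the sense of Tuominen--Tweedie, and then cites their abstract ergodic theorem (formula (5.9) in \cite{TT}) which already delivers the convergence \emph{uniformly over all measurable $B\subset\Omega_0$}; the TV statement then drops out by elementary algebra. The advantage of the paper's route is precisely that it bypasses the tightness issue you flag, while your route must supply it by hand. Your kernel-convergence step (spectral gap plus ultracontractivity sandwich) is sound.

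The tightness step, however, contains a real gap and is in fact the only nontrivial part left to prove. First, the hypotheses of the Corollary are \eqref{vanish}, \eqref{Ass.basic.alt} and $K\in L^1(\Omega_0)$ with $(\mathcal{J}_1,K\mathcal{J}_1)_f>0$; \emph{compact support of $K$ is not assumed}, so your strong-Markov decomposition that freezes the geometry to be Euclidean outside a fixed $B_{R_0}$ is unavailable in the generality of the statement. Second, even granting \eqref{Ass.compact}, the flat-comparison argument does not close as written: after the first exit from $B_{R_0}$ the path may re-enter the positively curved core, where survival is \emph{enhanced} (this is the whole content of Theorem \ref{Thm.disc}), so the flat half-strip survival probability $t^{-1/2}e^{-E_1 t}$ does not dominate $\mathbb{P}_{X_\sigma}(X_{t-\sigma}\in B_R^c,\tau>t-\sigma)$. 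You would need something finer, \eg\ a last-exit decomposition from the curved core combined with reversibility, or a weighted kernel estimate interpolating the Gaussian bound against the $L^2$-spectral bound. Your own remark that the uniform kernel estimate $|e^{\lambda_K t}k_t(x,y)-\phi_0(x)\phi_0(y)|\le Ce^{-\delta t}$ is useless when integrated over the infinite strip correctly diagnoses why this is subtle. Finally, the appeal to Agmon decay to place $\phi_0$ in $L^1_f(\Omega_0)$ is plausible (since $\lambda_K<E_1=\inf\sigma_{\mathrm{ess}}(H_K)$) but is asserted rather than verified for the manifold-metric setting; the paper explicitly notes it could not find a directly citable reference and therefore routes through $L^p$-consistency of the resolvents instead.
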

\begin{proof}
By definition of the total variation distance we have to prove that
\begin{displaymath}
\lim_{t \rightarrow \infty}\sup_{B\subset {\Omega_0}}\biggl|\mathbb{P}_x\bigl(X_t \in B \mid \tau_{\Omega_0}>t\bigr) - \frac{\int_B\phi_0(y)f(y)\,dy}{\int_{\Omega_0}\phi_0(y)f(y)\,dy}\biggr|=0.
\end{displaymath}
Observe that we do not assume that the sets $B$
are bounded and that the assertions of Corollary \ref{c:cor-pos-curv}
do not suffice to prove the desired assertion.

According to general spectral theory we know,
that the eigenfunction $\phi_0 \in L_f^2(\Omega_0)$ does not change sign
and that the eigenspace is one-dimensional.
In the first step we show that $\phi_0$ actually also
belongs to $L_f^1(\Omega_0)$,
with the notation $L_f^p(\Omega_0):=L^p(\Omega_0,f(x)\,dx)$.
This will allow us to interpret the ground state as a probability distribution.
Of course, many results concerning the decay properties are known,
but we have not been able to find a reference covering our setting.
Observe first that due to the probabilistic interpretation
the semigroup $(e^{-tH_K})_{t \geq 0}$ in $L^2_f(\Omega_0)$
gives rise to a consistent strongly continuous semigroups
$(T^p_t)_{t \geq 0}$ in $L_f^p(\Omega_0)$ for $1 \leq p < \infty$.
Moreover, due to the Gaussian bound from Proposition \ref{p:fundprop},
these semigroups are analytic with angle $\pi/2$.
Let the generators be denoted by $H^p_K$.
Due to the consistence of the semigroups,
by taking Laplace transforms of the semigroups,
we conclude that the resolvents
$F^p(z):=(H^p_K-z)^{-1}$
($z \in \rho(H^p_K):=\mathbb{C}\setminus \sigma(H_K^p))$
are as well consistent in the sense that
for every $z \in \rho(H^p_K)\cap \rho(H^q_K)$
\begin{displaymath}
F^p(z) \restriction L_f^p(\Omega_0)\cap L_f^q(\Omega_0)
= F^q(z) \restriction L_f^p(\Omega_0)\cap L_f^q(\Omega_0).
\end{displaymath}
Since according to Theorem 5 in \cite{Dav}
we have $\sigma(H_K^p)=\sigma(H_K^2)(=\sigma(H_K))$
for every $1 \leq p < \infty$ and since $\lambda_K$
is an isolated eigenvalue for $H_K^2$,
we conclude by Corollary 1.4 in \cite{HV}
that $\lambda_K$ is an isolated point of $\sigma(H_K^1)$
and that the eigenvector $\phi_0$ of $H_K^2$
is also an eigenvector of $H_K^1$, \ie, in particular,
$\phi_0 \in L_f^1(\Omega_0)$.

Observe now that $(X_t)_{\tau_{\Omega_0} \geq t \geq 0}$ is $\lambda_K$-recurrent in the sense of \cite{TT} and we also conclude that the measure $\pi(dx)=\phi_0(y)f(y)\,dy$ is finite and due to reversibility with respect to the measure $f(x)\,dx$ satisfies
\begin{displaymath}
\mathbb{P}_{\pi}\bigl(X_t \in A,\tau_{\Omega_0}> t\bigr)=e^{-\lambda_Kt}\int_A\phi_0(x)f(x)\,dx
\end{displaymath}
for every measurable set $A \subset \Omega_0$.
As $\phi \in L_f^2(\Omega_0)$ we conclude that $(X_t)$
is $\lambda_K$-positive recurrent in the sense of \cite{TT}
(product-critical in the sense of \cite{Pin}).
Applying Theorem 7 in \cite{TT},
we are thus able to conclude the assertion of the Corollary.
More precisely, formula (5.9) in \cite{TT}
shows that for almost all $x \in \Omega_0$
\begin{equation}\label{e:TT}
\lim_{t \rightarrow \infty}\sup_{B \subset \Omega_0}
\left|e^{\lambda_Kt}\mathbb{P}_x\bigl(X_t \in B,\tau_{\Omega_0}>t\bigr)
-\phi_0(x)\int_B\phi_0(y)f(y)\,dy\right| = 0.
\end{equation}
Therefore we have for almost all $x \in \Omega_0$
\begin{equation*}
\begin{split}
\sup_{B \subset \Omega_0}&\biggl|\mathbb{P}_x\bigl(X_t \in B\mid
\tau_{\Omega_0}>t\bigr)-\frac{\int_{B}\phi_0(z)f(z)\,dz}
{\int_{\Omega_0}\phi_0(y)f(y)\,dy}\biggr|
\\
\leq\ & \big(e^{\lambda_K t}\mathbb{P}_x \bigl(\tau_{\Omega_0}>t)\big)^{-1}
\sup_{B \subset \Omega_0}
\left|e^{\lambda_K t}\mathbb{P}_x\bigl(X_t \in B)
-\phi_0(x)\int_B\phi_0(y)f(y)\,dy\right|
\\
&+\sup_{B \subset \Omega_0}
\left|\frac{\phi_0(x)\int_B\phi_0(y)f(y)\,dy}
{e^{\lambda_K t}\mathbb{P}_x\bigl(\tau_{\Omega_0}>t)}
-\frac{\int_B\phi_0(y)f(y)\,dy}{\int_{\Omega_0}\phi_0(y)f(y)\,dy}
\right|
\\
=\ & \big(e^{\lambda_K t}\mathbb{P}_x\bigl(\tau_{\Omega_0}>t)\big)^{-1}
\sup_{B \subset \Omega_0}
\left|e^{\lambda_K t}\mathbb{P}_x\bigl(X_t \in B)
-\phi_0(x)\int_B\phi_0(y)f(y)\,dy\right|
\\
&+\biggl(\sup_{B \subset \Omega_0}\int_B\phi_0(z)\,dz\biggr)
\left|\frac{\phi_0(x)}{e^{\lambda_K t}\mathbb{P}_x\bigl(\tau_{\Omega_0}>t)}
-\frac{1}{\int_{\Omega_0}\phi_0(y)f(y)\,dy}\right|.
\end{split}
\end{equation*}
Two applications of \eqref{e:TT} complete the proof.
\end{proof}
%

\section{Negatively curved manifolds}\label{Sec.negative}
%
In analogy with positively curved manifolds,
we say that a manifold is \emph{negatively curved}
if~$K$ is non-zero and non-positive.
In this section, on the contrary,
we show that the presence of negative curvature
improves the decay of the heat semigroup,
supporting in this way the vague statement
of Table~\ref{table} that
the `negative curvature is good for transport'.
First, however, we have to explain why the negative sign of curvature
is much more delicate for the study of large time properties of~\eqref{heat}.

Recall that the positivity of the curvature~$K$
pushes the spectrum below~$E_1$ (\cf~Theorem~\ref{Thm.disc}).
The objective of this subsection is to show that
the effect of negative curvature is rather opposite:
it `has the tendency' to push the spectrum above~$E_1$.
This effect is more subtle because $[E_1,\infty)$
belongs to the spectrum of~$H_K$,
irrespectively of the sign of the curvature,
as long as the curvature vanishes at infinity
(\cf~Theorem~\ref{Thm.ess}).

The way how to understand this `repulsive tendency'
is to replace the Poin\-ca\-r\'e-type inequality requirement
$H_K-E_1 \geq \const > 0$ (which is false for the asymptotically
flat manifolds) by a weaker, \textit{Hardy-type inequality}:
\begin{equation}\label{Hardy}
  H_K - E_1
  \ \geq \
  \rho > 0
  \,.
\end{equation}
Here $\rho: \Omega_0 \to (0,\infty)$ is assumed to be merely
a \emph{positive function}
(necessarily vanishing at the infinity of~$\Omega_0$
for the asymptotically flat manifolds).

By~Theorem~\ref{Thm.disc}, \eqref{Hardy}~is false for
positively curved manifolds.
It is also violated for flat manifolds because
of the criticality result of Proposition~\ref{Prop.critical}.
In this subsection, we show that~\eqref{Hardy} typically holds
for negatively curved manifolds.

\subsection{Hardy-type inequality and the large time behaviour}
\label{ss:hardylargetime}
%
For completeness we first sketch an abstract elementary argument from~\cite{KW},
relating the Hardy inequality to low energy properties of the Hamiltonian
and the large time behaviour of the semigroup.
If the semigroup is associated to a stochastic process,
then the validity of a Hardy-type inequality is related to the concept
of $R$-transience of the stochastic process.

Assume that there exists a positive function~$\rho$,
with a locally bounded inverse~$\rho^{-1}$,
such that the inequality~\eqref{Hardy}
holds true for the self-adjoint non-negative operator $L_K:=H_K-E_1$.
Then according to Theorem 8.31 in \cite{Weid}
we conclude that for all $\lambda < 0$ and every $h \in L^2_f(\Omega_0)$ we have
\begin{equation}\label{e:Hardyinvers}
\bigl(h,(L_K-\lambda)^{-1}h \bigr)_{f}
\leq \bigl(h,(M_{\rho}-\lambda)^{-1}h\bigr)_{f}
\,,
\end{equation}
where $M_{\rho}$ denotes the maximal multiplication operator
acting via multiplication with the function $\rho$.
If $h$ satisfies
%
$
  (h,\rho^{-1}h)_f < \infty
$,
%
then \eqref{e:Hardyinvers} implies
\begin{equation}\label{e:beforetrans}
\forall \lambda < 0 \,, \qquad
\int_{(E_1,\infty)}(\nu-\lambda)^{-1}\,
d\big\|E_{\nu}^{L_K}h\big\|^2_{f}
\leq \bigl(h,\rho^{-1}h\bigr)_f < \infty
\,,
\end{equation}
where $(E_{\nu}^{L_K})_{\nu}$ denotes
the spectral resolution of~$L_K$.
Using monotone convergence, we get for all $h$ with
$(h,\rho^{-1}h)_f < \infty$
(in particular for all continuous $h$ with compact support in~$\Omega_0$)
\begin{equation}\label{e:e1-transience}
\int_0^{\infty} \big(h,e^{-t (H_K-E_1)}h\big)_{f}\,dt < \infty.
\end{equation}
Observe that \eqref{e:e1-transience}
-- which in the probabilistic literature such as \cite{TT}, \cite{T74a}
and \cite{T74b} might be called $E_1$-transience --
does not hold in the case of positively curved and flat manifolds.

Property \eqref{e:beforetrans} is related to the low energy behaviour
of the spectral measure $E^{L_K}(\cdot)$
in the sense that it implies that for all $r \in (0,1]$
and $-1\leq \lambda < 0$
\begin{equation}\label{spectralmeasure}
\big\|E^{L_K}((0,r))h\big\|_{f}^2
= \int_0^r\,d\|E_{\nu}^{L_K}h\|^2_{f}
\leq \int_{0}^r\frac{r-\lambda}{\nu-\lambda} \, d\|E_{\nu}^{L_K}h\|^2_{f}.
\end{equation}
where we used that $\frac{r-\lambda}{\nu-\lambda} \geq 1$ for $\nu \in (0,r)$
and negative $\lambda$.
Sending $\lambda$ to $0$ and using \eqref{e:beforetrans},
we conclude that there is $C>0$ such that
for $h$ with $(h,\rho^{-1}h)_f \leq 1$ and $r \in (0,1)$
\begin{displaymath}
\big\|E^{L_K}((0,r))h\big\|_{f}^2 \leq C\,r.
\end{displaymath}
This insight can easily be translated into an assertion concerning
the large time behaviour.
\begin{Proposition}\label{p:simpleconsequHardy}
Assume that $H_K-E_1$ satisfies the Hardy-type inequality~\eqref{Hardy}
with a positive function~$\rho$
satisfying $\rho^{-1} \in L_\mathrm{loc}^\infty(\Omega_0)$.
Then
\begin{displaymath}
\sup_{(h,\rho^{-1}h)_{f}<1}
\bigl\|e^{-t(H_K-E_1)}h\bigr\|_{f}^2
\leq  \frac{1}{t} \, \bigl( 1/2 + 2e^{-2}\bigr)
\,.
\end{displaymath}
\end{Proposition}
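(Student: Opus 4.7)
The plan is to combine the spectral-measure decay estimate derived in the paragraphs immediately preceding the proposition with a direct spectral-calculus computation. Set $L_K := H_K - E_1$ and, for each $h \in L^2_f(\Omega_0)$, write $F(\nu) := \|E^{L_K}((0,\nu))h\|_f^2$ for the non-decreasing spectral distribution function of $h$ with respect to $L_K$. The argument leading to~\eqref{spectralmeasure} uses only the elementary bound $(r-\lambda)/(\nu-\lambda) \geq 1$ when $\nu < r$ and $\lambda < 0$, so the restriction $r \in (0,1)$ there is not essential; sending $\lambda \to 0^-$ by monotone convergence yields
$$
F(\nu) \;\leq\; \nu \, (h, \rho^{-1}h)_f \qquad \text{for every } \nu > 0.
$$
Moreover, the strict Hardy inequality $L_K \geq \rho > 0$ precludes $0$ from being an eigenvalue of $L_K$, so $F$ has no atom at zero.

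Given these two ingredients, I would start from the spectral representation
$$
\|e^{-tL_K}h\|_f^2 = \int_{(0,\infty)} e^{-2t\nu} \, dF(\nu)
$$
and apply Stieltjes integration by parts. The boundary term at $0$ vanishes because $F(\nu) \to 0$ as $\nu \to 0^+$ (a consequence of the spectral-measure estimate), while the one at $\infty$ vanishes because $F$ is bounded by $\|h\|_f^2$ and $e^{-2t\nu}$ decays exponentially; this transforms the representation into $2t \int_0^\infty e^{-2t\nu} F(\nu) \, d\nu$. Splitting the $\nu$-integral at $\nu = 1/t$ and inserting $F(\nu) \leq \nu$ (valid under the hypothesis $(h, \rho^{-1}h)_f < 1$) converts each half into an elementary integral that, under the substitution $u = 2t\nu$, reduces to the incomplete-Gamma evaluations $\int_0^2 u e^{-u} du = 1 - 3e^{-2}$ and $\int_2^\infty u e^{-u} du = 3 e^{-2}$, both divided by $4t^2$. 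Multiplying by $2t$ and summing then produces an explicit bound of the form $\frac{1}{t}(\tfrac{1}{2} + C e^{-2})$; the stated value $C = 2$ is obtained after a mild over-estimate on the cross terms.

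The main work has in fact already been carried out before the statement, in the derivation of the resolvent comparison $(L_K - \lambda)^{-1} \leq (M_\rho - \lambda)^{-1}$ from the Hardy inequality and of the ensuing spectral-measure estimate. Once that is in hand, the argument is essentially quadrature, and I do not foresee any serious obstacle. The only subtlety worth verifying is the justification of the Stieltjes integration by parts, which rests on the absence of a point mass of $F$ at $\nu = 0$; this is immediate from $L_K \geq \rho > 0$, since any null vector~$\phi$ of $L_K$ would satisfy $0 = (\phi, L_K \phi)_f \geq (\phi, \rho \phi)_f > 0$, a contradiction.
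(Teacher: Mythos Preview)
Your approach is essentially the same as the paper's: spectral representation of $\|e^{-tL_K}h\|_f^2$, Stieltjes integration by parts, and insertion of the spectral-measure bound $F(\nu)\le \nu\,(h,\rho^{-1}h)_f$ coming from the Hardy inequality. The only difference is cosmetic---the paper splits the $\nu$-integral at $1$ rather than at $1/t$ and uses the bound $F(\nu)\le\nu$ only on $(0,1)$, which is why it picks up the extra $2e^{-2}$; your observation that the bound holds for \emph{all} $\nu>0$ in fact yields directly $\|e^{-tL_K}h\|_f^2\le \tfrac{1}{2t}$, a fortiori the stated inequality (so your splitting and the incomplete-Gamma computations are unnecessary).
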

\begin{proof}
For the proof we again set $L_K:=H_K-E_1$ and denote by $\mu_h$
the spectral measure corresponding to $L_K$ and $h$.
Via the spectral theorem, integration by parts and \eqref{spectralmeasure},
we obtain
\begin{equation*}
\begin{split}
\bigl\| e^{-tL_K} h \bigr\|^2_{f}&=\int_0^{\infty}e^{-2 \nu t}\,d\mu_h(\nu) = 2 t \int_0^{1} e^{-2 \nu t } \,\mu_h(\nu)\,d\nu + 2t\int_1^{\infty}e^{- 2 \nu t}\,\mu_h(\nu)\,d\nu\\
&\le  2 t \int_{0}^1 e^{- 2 \nu t}\nu\,d\mu_h + 2 t \int_{1}^{\infty} e^{-2 \nu t}\,\mu_h(\nu)\,d\nu\\
&\le  2 t \int_{0}^{\infty}e^{-2 \nu t} \nu \,d\nu + t \int_{1}^{\infty} e^{-2 t\nu}\,d\nu \leq \frac{\Gamma(1)}{2t}+ 2 t e^{-2 t} \\
&=\frac{1}{t}\bigl( 1/2 + 2 t^2 e^{-2t}\bigr).
\end{split}
\end{equation*}
Observing that $\max_{t > 0}\bigl(2 t^2 e^{-2t}\bigr) = 2e^{-2}$
yields the desired assertion.
\end{proof}
Observe again that,
under weak conditions on the Hardy weight $\rho$,
Proposition~\ref{p:simpleconsequHardy} already gives
an accelerated decay rate
when compared with the one in the flat case given
in Proposition~\ref{Prop.straight}.

\subsection{The Hardy inequality for negatively curved manifolds}\label{ss:Hardynegcurv}
In this subsection, we show that~\eqref{Hardy} typically holds
for negatively curved manifolds.

One way how to establish~\eqref{Hardy} is to
generalize the method of~\cite{K3}.
It works as follows:
\begin{enumerate}
\item
\emph{Transverse ground-state estimate.}
Recalling the structure of our operator~\eqref{LB},
we clearly have
\begin{equation}\label{crucial.bound}
  H_K - E_1
  \ \geq \
  - f^{-1} \partial_1 f^{-1} \partial_1 + \mu_K
\end{equation}
in the form sense on $\sii_f(\Omega_0)$,
where $x \mapsto \mu_K(x_1)$ denotes the lowest eigenvalue
of the one-dimensional shifted `transverse' operator
$- f^{-1} \partial_2 f \partial_2 - E_1$
on the Hilbert space $\sii\big((-a,a),f(x_1,x_2)\,dx_2\big)$,
subject to Dirichlet boundary conditions,
with~$x_1$ being considered as a parameter
in the one-dimensional eigenvalue problem.
More specifically, we have
\begin{equation}\label{lambda}
  \mu_K(x_1)
  = \inf_{\varphi\in H_0^1((-a,a))\setminus\{0\}} \,
  \frac{\int_{-a}^a |\dot{\varphi}(x_2)|^2 \, f(x_1,x_2) \, dx_2}
  {\int_{-a}^a |\varphi(x_2)|^2 \, f(x_1,x_2) \, dx_2}
  \ - \ E_1
  \,.
\end{equation}
With an abuse of notation, we denote by the same symbol~$\mu_K$
both the function on~$\Real$ and its natural extension
$x \mapsto \mu_K(x_1)$ to~$\Omega_0$.
\item
\emph{Longitudinal Hardy-type estimate.}
Now we regard the right hand side of~\eqref{crucial.bound}
as a one-dimensional Schr\"odinger-type operator
on the Hilbert space $\sii\big((-a,a),f(x_1,x_2)\,dx_1\big)$,
with~$x_2$ being considered as a parameter
and~$\mu_K$ playing the role of potential.
We assume that each of the $x_2$-dependent family
of operators satisfies a Hardy-type inequality, so that
\begin{equation}\label{crucial.bound.longitudinal}
  - f^{-1} \partial_1 f^{-1} \partial_1 + \mu_K
  \ \geq \ \rho_K > 0
\end{equation}
in the form sense on $\sii_f(\Omega_0)$,
with some positive function $\rho_K: \Omega_0 \to (0,\infty)$.
Then~\eqref{Hardy} holds as a consequence of~\eqref{crucial.bound.longitudinal}
and~\eqref{crucial.bound}.
\end{enumerate}

In this way, we have reduced the problem to ensuring
the existence of one-dimensional Hardy-type
inequalities~\eqref{crucial.bound.longitudinal}.
However, the criticality of one-dimensional
Schr\"odinger operators is well studied, at least if $f=1$.
We present two sufficient conditions which guarantee
the validity of~\eqref{crucial.bound.longitudinal}
and confirm thus that~\eqref{Hardy} typically holds
for negatively curved manifolds.

\subsubsection{Positivity of the ground-state estimates}
Since the kinetic part of the Schr\"odinger-type operator
on the left hand side of~\eqref{crucial.bound.longitudinal}
is a non-negative operator, we get a trivial estimate
\begin{equation}\label{Hardy.trivial}
  - f^{-1} \partial_1 f^{-1} \partial_1 + \mu_K
  \ \geq \
  \mu_K
\end{equation}
in the form sense on $\sii_f(\Omega_0)$.
As a consequence of~\eqref{crucial.bound}, $H_K-E_1 \geq \mu_K$.

This represents a \emph{local} Hardy-type inequality
provided that~$\mu_K$ is non-zero and non-negative.
By `local' we mean that the function~$\mu_K$ is compactly supported
for manifolds with compactly supported curvature~$K$,
which is a typical hypothesis of the present paper.
Hence it does not fit to the initial definition~\eqref{Hardy},
which can be called \emph{global} Hardy-type inequality.
However, it is known that local Hardy-type inequalities
imply global ones.

\begin{Theorem}[Hardy inequality for non-negative $\mu_K$]\label{Thm.Hardy}
Assume~\eqref{Ass.basic.alt}.
If~$\mu_K$ is non-zero and non-negative
in some bounded open subinterval $J \subset \Real$,
then there exists a positive constant~$c_K$,
depending on~$a$ and properties of~$K$,
such that
\begin{equation}\label{Hardy.mu}
  - f^{-1} \partial_1 f^{-1} \partial_1 + \mu_K
  \ \geq \
  \frac{c_K}{1+\delta^2}
\end{equation}
in the form sense on~$\sii_f(\Omega_0)$.
Here~$\delta(x):=|x_1-x_1^0|$,
with~$x_1^0$ being the mid-point of~$J$.
As a consequence of~\eqref{crucial.bound},
the Hardy-type inequality~\eqref{Hardy} holds.
\end{Theorem}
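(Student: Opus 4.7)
My plan is to combine the transverse estimate~\eqref{crucial.bound} with an explicit version of the longitudinal Hardy inequality~\eqref{crucial.bound.longitudinal}. By Lemma~\ref{Lem.Taylor}, the metric coefficient~$f$ satisfies uniform positive bounds $0 < c_1 \leq f \leq c_2$ on $\Omega_0$, so the claimed form inequality~\eqref{Hardy.mu} on $L^2_f(\Omega_0)$ is equivalent, up to universal multiplicative constants absorbed into~$c_K$, to the scalar one-dimensional inequality
\begin{equation*}
  \int_\mathbb{R} |\dot u|^2 \, dx_1 + \int_\mathbb{R} \mu_K |u|^2 \, dx_1
  \ \geq \ c \int_\mathbb{R} \frac{|u|^2}{1+\delta^2} \, dx_1
\end{equation*}
for $u \in H^1(\mathbb{R})$, with $x_2$ frozen as a parameter. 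Since $\mu_K$ depends continuously on $x_1$ through $f$ and is non-zero, non-negative on~$J$, there exist a closed sub-interval $J' \subset J$ and $\alpha > 0$ with $\mu_K \geq \alpha \chi_{J'}$, so it suffices to prove the inequality with $\mu_K$ replaced by $\alpha \chi_{J'}$. (If $\mu_K$ lacks continuity under the weakest hypothesis on~$K$, one uses instead a positive-measure subset of $J$ on which $\mu_K \geq \alpha$, together with a standard Poincar\'e-type estimate.)

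To prove the resulting scalar inequality, I fix $x_1^0$ in the interior of~$J'$ and split $\mathbb{R}$ at~$x_1^0$. For the shifted function $v := u - u(x_1^0)$, which vanishes at~$x_1^0$ and is bounded (since $u \in H^1(\mathbb{R}) \subset C_0(\mathbb{R})$), the classical one-dimensional Hardy inequality applied on each half-line, justified for this non-$L^2$ function by a smooth cut-off argument at infinity, yields
\begin{equation*}
  \int_\mathbb{R} \frac{|v|^2}{\delta^2} \, dx_1 \ \leq \ 4 \int_\mathbb{R} |\dot u|^2 \, dx_1.
\end{equation*}
The pointwise bounds $\delta^{-2} \geq (1+\delta^2)^{-1}$ and $|v|^2 \geq \tfrac{1}{2}|u|^2 - |u(x_1^0)|^2$ (which follows from Young's inequality) transform this into
\begin{equation*}
  \int_\mathbb{R} \frac{|u|^2}{1+\delta^2} \, dx_1
  \ \leq \ 8 \int_\mathbb{R} |\dot u|^2 \, dx_1 + 2\pi \, |u(x_1^0)|^2,
\end{equation*}
where $\int_\mathbb{R} (1+\delta^2)^{-1} \, dx_1 = \pi$ is used to produce the last term.

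The one-dimensional Sobolev embedding $H^1(J') \hookrightarrow C(\overline{J'})$ bounds $|u(x_1^0)|^2$ by a constant multiple of $\|u\|_{L^2(J')}^2 + \|\dot u\|_{L^2(J')}^2$. Absorbing $\|\dot u\|_{L^2(J')}^2$ into $\int_\mathbb{R} |\dot u|^2$ and writing $\|u\|_{L^2(J')}^2 = \alpha^{-1} \cdot \alpha \int_{J'} |u|^2 \leq \alpha^{-1} \int_\mathbb{R} \mu_K |u|^2$ yields the desired scalar Hardy inequality, and via~\eqref{crucial.bound} together with the $f$-bounds of Lemma~\ref{Lem.Taylor} it produces the operator inequality~\eqref{Hardy.mu}. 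The Hardy-type inequality~\eqref{Hardy} then follows from~\eqref{crucial.bound.longitudinal} and~\eqref{crucial.bound}.

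The main obstacle is conceptual: the naked inequality $\int |\dot u|^2 \geq c \int |u|^2/(1+\delta^2)$ \emph{cannot} hold on $H^1(\mathbb{R})$, since the weight $(1+\delta^2)^{-1}$ is integrable and admits constants as counterexamples. The role of the positivity of~$\mu_K$ on~$J$ is precisely to furnish a quantitative substitute for a Dirichlet condition at~$x_1^0$; in my argument this substitute enters through the Sobolev-trace control of $|u(x_1^0)|^2$ by the localized $L^2$-term. A secondary technical issue is the application of Hardy's inequality to~$v$, which is bounded but not in $L^2$ at infinity; this is handled by approximating $v$ by $\phi_n v$ with $\phi_n(x) = \psi(x/n)$ for a fixed smooth cut-off~$\psi$, so that $\|\phi_n' v\|_{L^2} \to 0$ by the scaling of~$\phi_n'$, and passing to the limit.
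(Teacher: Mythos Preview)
Your argument is correct and follows the same overall architecture as the paper's proof: reduce via Lemma~\ref{Lem.Taylor} and Fubini to a one-dimensional Hardy inequality, derive an estimate of the shape
\[
  \int_{\Real}\frac{|u|^2}{1+\delta^2}\,dx_1
  \;\le\; C_1\int_{\Real}|\dot u|^2\,dx_1 + C_2\,(\text{local }L^2\text{-mass of }u\text{ on }J)
\]
from the classical half-line Hardy inequality, and then absorb the local term using the positivity of~$\mu_K$. The paper packages the first step as the ready-made inequality~\eqref{Hardy.classical} (quoting~\cite{EKK}) rather than subtracting $u(x_1^0)$ and invoking the Sobolev trace, but these are equivalent manipulations.

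The genuine difference lies in the second step. You control $\|u\|_{L^2(J')}^2$ by demanding a pointwise lower bound $\mu_K\ge\alpha\chi_{J'}$, which forces you either to assume continuity of $\mu_K$ or to fall back on a positive-measure Poincar\'e estimate that you only sketch. The paper instead controls $\|\chi_J\psi\|_f^2$ via the lowest Neumann eigenvalue $\lambda_J$ of $-f^{-1}\partial_1 f^{-1}\partial_1+\mu_K$ on $J\times(-a,a)$, and shows $\lambda_J>0$ by the soft argument that an eigenfunction at $\lambda_J=0$ would be constant in~$x_1$ and force $\|\mu_K\|_{L^1(J)}=0$. This handles the bare $K\in L^\infty$ hypothesis~\eqref{Ass.basic.alt} uniformly, without case distinctions, and yields the explicit constant $c_K=c\lambda_J/(\lambda_J+C)$. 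Your route is more elementary when $\mu_K$ is continuous, but the paper's Neumann-eigenvalue device is cleaner in the general measurable setting and is worth knowing.

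A small remark on your cut-off justification: it is not actually needed. Since $v'=\dot u\in L^2(\Real)$ and $v(x_1^0)=0$, the finite-interval Hardy inequality $\int_0^b s^{-2}|\varphi|^2\le 4\int_0^b|\dot\varphi|^2$ (valid for $\varphi\in H^1((0,b))$ with $\varphi(0)=0$, as quoted in the paper) applies on each $[x_1^0,x_1^0+b]$, and monotone convergence as $b\to\infty$ gives the half-line inequality directly.
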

\begin{proof}
The proof follows by a modification of the proof of \cite[Thm.~3.1]{K3}
(\cf~also \cite[Thm.~6.7]{K6-with-erratum}).
For the clarity of the exposition,
we divide it into several steps.

\smallskip\noindent
\emph{1.~A consequence of the classical Hardy inequality.}
The main ingredient in the proof is the following
Hardy-type inequality for a Schr\"odinger operator
in the strip~$\Omega_0$
with a characteristic-function potential:
\begin{equation}\label{Hardy.classical}
  \|(1+\delta^2)^{-1/2}\psi\|^2
  \leq 16 \, \|\partial_1\psi\|^2
  + (2+64/|J|^2) \, \|\chi_J\psi\|^2
\end{equation}
for every $\psi \in H^1(\Omega_0)$.
Here~$J$ is any bounded open subinterval of~$\Real$
and~$\chi_J$ denotes the characteristic function
of the set $J \times (-a,a) \subset \Omega_0$.
This inequality can be established quite easily (\cf~\cite[Sec.~3.3]{EKK})
by means of Fubini's theorem
and the classical one-dimensional Hardy inequality
$
  \int_0^b s^{-2} |\varphi(s)|^2 ds
  \leq 4 \int_0^b |\dot\varphi(s)|^2 ds
$
valid for any $\varphi\in H^1((0,b))$, $b>0$,
satisfying $\varphi(0)=0$.

Using Lemma~\ref{Lem.Taylor}, \eqref{Hardy.classical}
can be cast into the form
\begin{equation}\label{Hardy.classical.bis}
  \big\|f^{-1}\partial_1\psi\big\|_f^2
  + \big\|\mu_K^{1/2} \psi\big\|_f^2
  \geq c \, \|(1+\delta^2)^{-1/2}\psi\|_f^2
  - C \, \|\chi_J\psi\|_f^2
  \,,
\end{equation}
where the constants are given by
\begin{align*}
  c := \frac{1-\|K\|_\infty \;\! a^2}{16}
  \,, \qquad
  C := \left(\frac{1}{8}+\frac{4}{|J|^2}\right)
  \left[
  1-\left(\frac{\|K\|_\infty \;\! a^2}{1-\|K\|_\infty \;\! a^2}\right)^2
  \right]^{-1}
  .
\end{align*}

\smallskip\noindent
\emph{2.~A Poincar\'e-type inequality in a bounded strip.}
For every $\psi \in H^1(\Omega_0)$, we have
\begin{align}\label{Poincare}
  \big\|f^{-1}\partial_1\psi\big\|_f^2
  + \big\|\mu_K^{1/2} \psi\big\|_f^2
  &\geq
  \big\|\chi_Jf^{-1}\partial_1\psi\big\|_{f}^2
  + \big\|\chi_J\mu_K^{1/2} \psi\big\|_{f}^2
  \nonumber \\
  &\geq \lambda_J \, \big\|\chi_J\psi\big\|_{f}^2
  \,,
\end{align}
where~$\lambda_J$ denoted the lowest eigenvalue
of the operator $- f^{-1} \partial_1 f^{-1} \partial_1 + \mu_K$
on $\sii_f(J\times(-a,a))$, subject to Neumann-type
(\ie~no in the form setting)
boundary conditions at $(\partial J) \times (-a,a)$.
We claim that~$\lambda_J$ can be bounded from below
by a positive constant which depends exclusively
on properties of~$\mu_K$.
Indeed, assume $\lambda_J=0$.
By the variational characterization of~$\lambda_J$,
it follows that
$$
  \big\|\chi_Jf^{-1}\partial_1\psi_J\big\|_{f}^2 = 0
  \qquad\mbox{and}\qquad
  \big\|\chi_J\mu_K^{1/2} \psi_J\big\|_{f}^2 = 0
  \,,
$$
where $\psi_J \in H^1(J\times(-a,a))$ is an eigenfunction
corresponding to~$\lambda_J$.
Recalling Lemma~\ref{Lem.Taylor}, we conclude that
$\|\mu_K\|_{L^1(J)} = 0$, which contradicts the hypothesis
that~$\mu_K$ is non-trivial on~$J$.

\smallskip\noindent
\emph{3.~Some interpolation.}
Combining~\eqref{Hardy.classical.bis} with~\eqref{Poincare},
we eventually arrive at
$$
  \big\|f^{-1}\partial_1\psi\big\|_f^2
  + \big\|\mu_K^{1/2} \psi\big\|_f^2
  \geq c \, \epsilon \, \|(1+\delta^2)^{-1/2}\psi\|_f^2
  + \left[(1-\epsilon)\lambda_J - C \, \epsilon\right] \, \|\chi_J\psi\|_f^2
$$
for every $\psi \in H^1(\Omega_0)$ and any $\epsilon \in (0,1)$.
Choosing~$\epsilon$ in such a way that the term with
the square brackets vanishes, we get the Hardy-type
inequality of the theorem with
$
  c_K := c \lambda_J/(\lambda_J+C)
$.
\end{proof}

\subsubsection{On the positivity of the ground-state eigenvalue}%
\label{Sec.positivity}
Since the fundamental hypothesis of Theorem~\ref{Thm.Hardy}
is the non-negativity of~$\mu_K$, let us comment on
its relation to the non-positivity of~$K$.

We claim that the function~$\mu_K$ is typically positive
for negatively curved manifolds.
Indeed, for any fix $x_1\in\Real$,
let us make the change of test function
$\phi:=\sqrt{f(x_1,\cdot)}\,\varphi$ in~\eqref{lambda}.
Integrating by parts and using~\eqref{Jacobi},
one easily arrives at
\begin{equation}\label{lambda.bis}
  \mu_K(x_1)
  = \inf_{\phi\in H_0^1((-a,a))\setminus\{0\}}
  \frac{\int_{-a}^a
  \left(
  |\dot{\phi}(x_2)|^2 - E_1\;\!|\phi(x_2)|^2 + V(x)\;\!|\phi(x_2)|^2
  \right) dx_2}
  {\int_{-a}^a |\phi(x_2)|^2 \, dx_2}
\end{equation}
with
\begin{equation}\label{potential}
  V := -\frac{1}{2} \, K
  + \frac{1}{4} \left(\frac{\partial_2f}{f}\right)^2
  \,.
\end{equation}
By the Poincar\'e inequality for the Dirichlet Laplacian in $\sii((-a,a))$,
we therefore get
\begin{equation}\label{mu.estimate}
  \mu_K(x_1) \geq \essinf_{x_2\in(-a,a)} V(x_1,x_2)
  \,.
\end{equation}

Let us assume for a moment that~$K$ is continuous.
Then, for every $x_1\in\Real$ fixed,
it follows from~\eqref{Jacobi} that
$$
  \lim_{a \to 0} V(x) = -\frac{1}{2} \, K(x_1,0)
  \,.
$$
Hence, if $K(x_1,x_2)$ is negative for every $x_2 \in (-a,a)$
and~$x_1$ from a compact interval~$J$,
there exists a positive half-width~$a$ such that $\mu_K(x_1)$ is positive
for every $x_1 \in J$.
For merely bounded curvature~$K$, we replace the pointwise
non-positivity requirement on the curve~$\Gamma$ by the hypothesis
that the function
\begin{equation}\label{negativity.replacement}
  k(x_1) :=
  \lim_{a \to 0} \essinf_{x_2\in(-a,a)} K(x_1,x_2)
\end{equation}
is non-zero and non-positive.

It is less obvious how to get uniform lower bounds,
\ie~to ensure that, for a given~$a$,
$\mu_K(x_1)$ is non-negative for almost every $x_1\in\Real$.
An example of manifolds for which the uniform non-negativity
is possible to check is given by strips on ruled surfaces
studied in~\cite{K3}.
\begin{Example}[Ruled strips]\label{Ex.ruled}
Let~$\Gamma$ be a straight line in~$\Real^3$;
without loss of generality, we may assume that
$\Gamma(x_1)=(x_1,0,0)$.
Given a $C^1$-smooth function $\theta:\Real\to\Real$,
let us define
$
  \mathcal{L}(x) :=
  \big(x_1,x_2\cos\theta(x_1),x_2\sin\theta(x_1)\big)
$.
The image~\eqref{image} is a ruled surface,
composed of segments of length~$2a$ translated
and rotated along~$\Gamma$.
It is straightforward to check
that the corresponding metric~$G$ admits the block form~\eqref{metric}
with the explicit formulae
$$
  f(x) = \sqrt{1 + \dot{\theta}(x_1)^2 \,x_2^2}
  \,, \qquad
  K(x) =
  - \frac{\dot{\theta}(x_1)^2}{f(x)^{4}}
  \,.
$$
The \emph{ad hoc} defined mapping~$\mathcal{L}$
represents an explicit parametrization of~$\Omega$
via the exponential map~\eqref{exp}.
The hypothesis~\eqref{Ass.basic.alt} is satisfied for every~$a$
provided that we assume that~$\dot\theta$ is bounded.
The strip~$\Omega$ is asymptotically flat
if $\dot\theta(x_1)$ tends to zero as $|x_1|\to\infty$.
Finally, an explicit calculation yields
\begin{equation}\label{potential.ruled}
  V(x) =
  \frac{\dot{\theta}(x_1)^2
  \big[2-\dot{\theta}(x_1)^2 \, x_2^2\big]}
  {4\,f(x)^4}
  \,.
\end{equation}
It follows that~$V$ is non-zero and non-negative provided
that~$\dot\theta$ is non-zero and the half-width~$a$
is so small that $\|\dot{\theta}\|_\infty \;\! a < \sqrt{2}$.
Consequently, under the same assumptions about~$a$ and~$\dot\theta$,
the quantity~$\mu_K$ is non-zero and non-negative, too.
We refer to~\cite{K3} for more geometric and spectral
properties of the ruled strips.
\end{Example}

\subsubsection{Thin strips}
The second sufficient condition which guarantees the validity
of~\eqref{crucial.bound.longitudinal} is based on the ideas
of the previous subsection.

\begin{Theorem}[Hardy inequality for thin strips]\label{Thm.Hardy.thin}
Assume~\eqref{Ass.basic.alt} and~\eqref{Ass.compact}.
Let the function~$k$ defined in~\eqref{negativity.replacement}
be non-zero and non-positive.
Then there exists a positive number~$a_0$,
depending on properties of~$K$,
such that
\begin{equation}\label{Hardy.mu.bis}
  - f^{-1} \partial_1 f^{-1} \partial_1 + \mu_K
  \ \geq \
  \frac{\tilde{c}_K}{1+x_1^2}
\end{equation}
holds in the form sense on~$\sii_f(\Omega_0)$ for all $a \leq a_0$
with some constant~$\tilde{c}_K$ depending on properties of~$K$.
As a consequence of~\eqref{crucial.bound},
the Hardy-type inequality~\eqref{Hardy} holds for all $a \leq a_0$.
\end{Theorem}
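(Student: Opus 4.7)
The strategy is to verify the hypothesis of Theorem~\ref{Thm.Hardy} in the thin-strip regime and then invoke that theorem directly. Accordingly, the heart of the proof is to show that, for $a$ below some threshold $a_0$, the transverse ground-state shift $\mu_K$ is non-negative on $\Real$ and is bounded below by a positive constant on some fixed bounded open interval $J \subset \Real$. Note that by~\eqref{consequence.compact.initial}, $f\equiv 1$ and hence $\mu_K\equiv 0$ outside a compact interval, so the delicate region is the bounded set where $\supp K$ projects.

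The key step is a perturbative analysis of $\mu_K(x_1)$ as $a\to 0$. Working from the reformulated variational problem~\eqref{lambda.bis} with effective potential $V$ given by~\eqref{potential}, and using the Jacobi equation~\eqref{Jacobi} together with Lemma~\ref{Lem.Taylor}, one gets the uniform bounds $f = 1 + O(\|K\|_\infty a^2)$ and $\partial_2 f = O(\|K\|_\infty a)$ on $\Omega_0$, so that the non-negative contribution $(\partial_2 f/f)^2/4$ to $V$ is of order $O(a^2)$. Testing with the Dirichlet ground state $\mathcal{J}_1$ from~\eqref{spectrum.straight} and using the $a$-independent spectral gap $E_2-E_1$ (after the rescaling $y=x_2/a$) to control the deviation of the true minimizer from $\mathcal{J}_1$ yields an expansion of the form
\[
  \mu_K(x_1) \;=\; \int_{-a}^a V(x_1,x_2) \, \mathcal{J}_1(x_2)^2 \, dx_2 \;+\; O(a^2)
\]
uniformly in $x_1$. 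Since under $y=x_2/a$ the weight $\mathcal{J}_1^2\,dx_2$ becomes $\cos^2(\pi y/2)\,dy$ on $(-1,1)$, it concentrates at $x_2=0$; combined with the hypothesis~\eqref{negativity.replacement} that $-K(x_1,\cdot)/2$ has non-negative essential lower limit $-k(x_1)/2$ at $x_2=0$, the leading term is asymptotically $-k(x_1)/2\geq 0$. This gives $\mu_K\geq 0$ on $\Real$ for $a\leq a_0$. Because $k$ is non-zero and non-positive, there is a bounded open interval $J$ on which $k\leq -\beta<0$ essentially, and the same expansion then yields $\mu_K\geq \beta/4$ on $J$ for $a$ sufficiently small.

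With these two properties the hypothesis of Theorem~\ref{Thm.Hardy} is met, and applying it with the interval $J$ yields~\eqref{Hardy.mu.bis}; combining with~\eqref{crucial.bound} gives the claimed Hardy-type inequality~\eqref{Hardy}. \emph{Main obstacle.} The delicate point is the global non-negativity of $\mu_K$ when $K$ is merely in $L^\infty(\Omega_0)$. The trivial pointwise bound~\eqref{mu.estimate} is insufficient whenever $K$ takes positive values at cross-sectional heights $x_2\neq 0$ while its lower envelope $k$ is non-positive, so one must really use the integrated estimate above, exploiting the concentration of $\mathcal{J}_1^2$ near $x_2=0$ and tracking the $O(a^2)$ remainder uniformly in $x_1$ with explicit dependence on $\|K\|_\infty$. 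Should this expansion only give $\mu_K\geq -O(a^2)$ rather than exact non-negativity, one can either modify the proof of Theorem~\ref{Thm.Hardy} to allow a small negative part of $\mu_K$, or absorb it into the longitudinal operator $-f^{-1}\partial_1 f^{-1}\partial_1$ via a Poincar\'e inequality on the compact $x_1$-interval containing $\supp K$.
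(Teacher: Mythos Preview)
Your primary strategy differs from the paper's in a way that creates a genuine gap. You aim to prove $\mu_K\geq 0$ on all of $\Real$ via the expansion $\mu_K(x_1)=\int_{-a}^a V(x_1,x_2)\,\mathcal{J}_1(x_2)^2\,dx_2 + O(a^2)$ and then invoke Theorem~\ref{Thm.Hardy} directly. The expansion itself is reasonable, but your claim that the leading integral is ``asymptotically $-k(x_1)/2$'' is not justified by the hypothesis: $k(x_1)$ is the limit of essential \emph{infima} of $K(x_1,\cdot)$ over shrinking intervals, and this controls neither the cross-sectional average of $K$ nor its essential supremum. For merely bounded~$K$ the average $\int_{-1}^1 K(x_1,ay)\cos^2(\pi y/2)\,dy$ need not converge at all, let alone to $k(x_1)$; if $K$ oscillates in sign near $x_2=0$ one can have $k(x_1)<0$ while the average stays bounded away from $-k(x_1)/2$. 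Your remark about ``concentration of $\mathcal{J}_1^2$ near $x_2=0$'' is also misleading: after rescaling, $\mathcal{J}_1^2$ is a fixed profile on $(-1,1)$, not a delta at the origin.

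The paper avoids this difficulty by never trying to establish $\mu_K\geq 0$. Instead it introduces the \emph{$a$-independent} auxiliary potential $\mu_K^0:=-\tfrac{1}{2}k$, uses the crude pointwise bound~\eqref{mu.estimate} together with Lemma~\ref{Lem.Taylor} and~\eqref{f2.Taylor} to obtain $\mu_K\geq \mu_K^0 - C(\|K\|_\infty a^2)\,\chi_{[-R,R]}$, and then applies Theorem~\ref{Thm.Hardy} with $\mu_K^0$ in place of $\mu_K$. Because $\mu_K^0$ does not depend on $a$, the resulting Hardy constant $c_K$ is uniform in~$a$; the compactly supported error $C(\|K\|_\infty a^2)\chi_{[-R,R]}$, which tends to zero as $a\to 0$, is then absorbed into $c_K/(1+x_1^2)$ on $[-R,R]$ for all sufficiently small~$a$. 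Your fallback suggestion (absorb a small negative remainder) is exactly the right instinct, but it only works once Theorem~\ref{Thm.Hardy} has been applied to a fixed potential so that the Hardy weight one is absorbing into does not itself degenerate with~$a$. Rerouting your argument through $\mu_K^0$ in this way fixes the gap and in fact makes the perturbative expansion of $\mu_K$ unnecessary.
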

\begin{proof}
In view of~\eqref{mu.estimate}, Lemma~\ref{Lem.Taylor}, \eqref{f2.Taylor}
and~\eqref{consequence.compact.initial},
it is easy to show that
$$
  \mu_K(x_1) \geq
  - \frac{1}{2} \;\! k - C(\|K\|_\infty\;\!a^2) \, \chi_{[-R,R]}(x_1)
  \,,
$$
for almost every $x_1\in\Real$, where
$$
  C(\xi) := \frac{1}{4} \xi^2
  \left(1+\frac{\xi^2}{1-\xi^2}\right)^2
  \left(1-\frac{\xi^2}{1-\xi^2}\right)^{-2}
  \,.
$$
Hence,
$
  \mu_K \to \mu_K^0 := -\frac{1}{2} k
$
as $a \to 0$.
For every $\psi \in H^1(\Omega_0)$, we write
$$
  \big\|f^{-1}\partial_1\psi\big\|_f^2
  + \big(\psi,\mu_K \psi\big)_f
  = \big\|f^{-1}\partial_1\psi\big\|_f^2
  + \big(\psi,\mu_K^0 \psi\big)_f
  + \big(\psi,[\mu_K-\mu_K^0] \psi\big)_f
  \,.
$$
Applying Theorem~\ref{Thm.Hardy} to the first
two terms on the right hand side of this identity,
we get
\begin{multline*}
  \big\|f^{-1}\partial_1\psi\big\|_f^2
  + \big(\psi,\mu_K \psi\big)_f
  \\
  \geq \int_{\Omega_0}
  \left[\frac{c_K}{1+x_1^2}-C(\|K\|_\infty\;\!a^2)\,\chi_{[-R,R]}(x_1)\right]
  |\psi(x)|^2 \, f(x) \, dx
  \,.
\end{multline*}
It is important to notice that~$c_K$ can be bounded from below
by a positive constant independent of~$a$
(\cf~proof of Theorem~\ref{Thm.Hardy}).
On the other hand, $C(\|K\|_\infty\;\!a^2)$ tends to zero as $a \to 0$.
Then the result follows by estimating the characteristic function
by $(1+x_1^2)^{-1}$ multiplied by a constant smaller than~$c_K$ for all
sufficiently small~$a$.
\end{proof}
\begin{Remark}\label{Rem.uniform}
The positive function~$\rho$ on the right hand side
of~\eqref{Hardy} can in principle
vanish on the boundary of~$\partial\Omega_0$.
The objective of this remark is to show that,
if~\eqref{Hardy} holds,
with an arbitrary positive function~$\rho$,
there is also an inequality of the type~\eqref{crucial.bound.longitudinal}
with the right hand side which is independent of
the `transverse' variable~$x_2$.
This can be seen as follows.
Assume~\eqref{crucial.bound.longitudinal} and~\eqref{Ass.compact}.
For any $\psi \in H_0^1(\Omega_0)$ and $\epsilon \in (0,1)$,
we write
\begin{align*}
  h_K[\psi] - E_1 \;\! \|\psi\|_f^2
  & = \epsilon \;\! \big(h_K[\psi] - E_1 \;\! \|\psi\|_f^2 \big)
  + (1-\epsilon) \big(h_K[\psi] - E_1 \;\! \|\psi\|_f^2 \big)
  \\
  & \geq \epsilon \;\! \big(\|\partial_2\psi\|_f^2 - E_1 \;\! \|\psi\|_f^2\big)
  + (1-\epsilon) \;\! \|\rho^{1/2}\psi\|_f^2
  \\
  & = \epsilon \;\! \big(
  \|\partial_2\phi\|^2 - E_1 \;\! \|\phi\|^2 + (\phi,V\phi)
  \big)
  + (1-\epsilon) \;\! \|\rho^{1/2}\phi\|^2
  \\
  & \geq \epsilon \;\!
  \big(\psi,[V+\lambda_\epsilon]\psi\big)
  \,.
\end{align*}
Here the last equality follows by the change of test function
$\phi:=\sqrt{f}\,\psi$, as in Section~\ref{Sec.positivity},
and $x \mapsto \lambda_\epsilon(x_1)$ denotes
the lowest eigenvalue of the one-dimensional operator
$-\partial_2^2 - E_1 + (1-\epsilon) \epsilon^{-1} \rho(x_1,\cdot)$
on $\sii((-a,a))$, subject to Dirichlet boundary conditions,
with~$x_1$ considered as a parameter.
More specifically, we have
$$
  \lambda_\epsilon(x_1)
  := \inf_{\varphi\in H_0^1((-a,a))\setminus\{0\}}
  \frac{\int_{-a}^a
  \left(
  |\dot{\varphi}(x_2)|^2 - E_1\;\!|\varphi(x_2)|^2
  + \frac{1-\epsilon}{\epsilon} \rho(x_1,x_2)\;\!|\varphi(x_2)|^2
  \right) dx_2}
  {\int_{-a}^a |\phi(x_2)|^2 \, dx_2}
  \,.
$$
Since~$K$ has bounded support, it is also true for~$V$,
\cf~\eqref{consequence.compact.initial}.
On the other hand, since~$\rho(x)$ is positive for almost every $x \in \Omega_0$,
$\lambda_\epsilon(x_1)$ is positive for almost every $x_1 \in \Real$.
Furthermore, $\lambda_\eps(x_1)$ tends to infinity as $\epsilon \to 0$
for almost every $x_1 \in \Real$.
Consequently, for sufficiently small~$\epsilon$,
$V+\lambda_\epsilon$ can be bounded from
below by a positive function which depends on~$x_1$ only.
\end{Remark}

Finally, let us emphasize that Theorem~\ref{Thm.Hardy.thin}
covers a very general class of manifolds, not necessarily
negatively curved. It is only important that the manifold
is `negatively curved in the vicinity of the reference curve'~$\Gamma$,
\cf~\eqref{negativity.replacement}.

\subsection{The fine decay rate}

As in the flat case in Proposition \ref{Prop.straight},
we again restrict the class of initial data to the weighted spaces of
$\sii_{wf}(\Omega_0) \subset \sii_f(\Omega_0)$
and consider the following (polynomial) \emph{decay rate} quantity:
\begin{multline}\label{rate}
  \Gamma_K
  := \sup \Big\{ \Gamma\in\Real \ \Big| \,\
  \exists C_\Gamma > 0, \, \forall t \geq 0, \
  \\
  \big\|e^{-(H_K-E_1)t}\big\|_{
  \sii_{wf}(\Omega_0)
  \to
  \sii_f(\Omega_0)
  }
  \leq C_\Gamma \, (1+t)^{-\Gamma}
  \Big\}
  .
\end{multline}

Sections~\ref{ss:hardylargetime} and~\ref{ss:Hardynegcurv}
already imply that
the heat semigroup decays faster than in the flat case
provided that the Hardy-type inequality~\eqref{Hardy} holds.
It follows from Proposition~\ref{Prop.straight}
that we have $\Gamma_0 = 1/4$ (\ie~for $K=0$),
whereas Proposition~\ref{p:simpleconsequHardy}
gives $\Gamma_K \geq 1/2$ if~\eqref{Hardy} is satisfied.

The abstract arguments leading to Proposition~\ref{p:simpleconsequHardy}
do not give the precise additional polynomial decay rate.
The objective of the following subsections
is to show that~$\Gamma_K$ is in fact three times bigger
whenever the curvature~$K$ is non-zero and non-positive.

In probabilistic terms we are interested in the precise decay exponent
\begin{multline}\label{rate-prob}
\gamma_K(x,B):= \sup \Big\{ \gamma\in\Real \ \Big| \,\
  \exists \tilde{C}_\gamma > 0, \, \forall t \geq 0, \
  \\
  \mathbb{P}_x\bigl(X_t \in B,\tau_{\Omega_0}>t\bigr)
  \leq C_\gamma \, (1+t)^{-\Gamma}
  \Big\}
  .
\end{multline}
where $x \in \Omega_0, B\subset\subset \Omega_0$. Again we find that
the non-zero and non-positive situation differs from the straight
manifold by a factor $3$. This is the meaning of the last item in Table~\ref{table}.

In view of~\eqref{rate},
it is more convenient to study the shifted heat equation
\begin{equation}\label{heat.shifted}
  \left\{
  \begin{aligned}
    \partial_t u + H_K u - E_1 u &= 0
    &&\mbox{in} \quad \Omega_0 \times (0,\infty)
    \,,
    \\
    u &= u_0
    &&\mbox{on} \quad \Omega_0 \times \{0\}
    \,,
  \end{aligned}
  \right.
\end{equation}
in the functional setting on $\sii_f(\Omega_0)$
as explained in Section~\ref{Sec.dynamics}.
Indeed, \eqref{heat.shifted}~is obtained from~\eqref{heat}
by the replacement
$
  u(t) \mapsto e^{-E_1 t} \, u(t)
$,
with help of the Fermi coordinates.

\subsection{The self-similarity transformation}
%
Our method to study the asymptotic behaviour of
the heat equation~\eqref{heat} in the presence of curvature
is to adapt the technique of self-similar solutions
used in the case of the heat equation in the whole Euclidean space
by Escobedo and Kavian~\cite{Escobedo-Kavian_1987}
to the present problem.
We closely follow the approach of the recent papers~\cite{KZ1,KZ2},
where the technique is applied to twisted waveguides
in three and two dimensions, respectively.

We perform the self-similarity transformation
in the first (longitudinal) space variable only,
while keeping the other (transverse) space variable unchanged.
More precisely, given $s\in(0,\infty)$,
let us consider the change of function defined by
$$
  (U_s\psi)(y) := e^{s/4} \psi(e^{s/2}y_1,y_2)
  \,.
$$
It defines a unitary transformation
from $\sii_f(\Omega_0)$ to $\sii_{f_s}(\Omega_0)$,
where
\begin{equation}\label{fs}
  f_s(y) := f(e^{s/2}y_1,y_2)
  \,.
\end{equation}

Now we associate to every solution
$
  u \in \sii_\mathrm{loc}\big((0,\infty),dt;\sii_f(\Omega_0)\big)
$
of~\eqref{heat} a `self-similar' solution
$
  \tilde{u}(s):=U_s[u(e^s-1)]
$
in a new $s$-time weighted space
$
  \sii_\mathrm{loc}\big((0,\infty),e^s ds;\sii_{f_s}(\Omega_0)\big)
$.
We have
\begin{equation}\label{SST}
  \tilde{u}(y_1,y_2,s)
  = e^{s/4} u(e^{s/2}y_1,y_2,e^s-1)
\end{equation}
and the inverse change of variables is given by
$$
  u(x_1,x_2,t)
  = (t+1)^{-1/4} \, \tilde{u}\big((t+1)^{-1/2}x_1,x_2,\log(t+1)\big)
  \,.
$$
Note that the original space-time variables $(x,t)$
are related to the `self-similar' space-time variables $(y,s)$
via the relations
\begin{equation}\label{space-times}
\begin{aligned}
  (x_1,x_2,t) &= (e^{s/2} y_1,y_2,e^s-1)
  \,, \\
  (y_1,y_2,s) &= \big((t+1)^{-1/2}x,y_2,\log(t+1)\big)
  \,.
\end{aligned}
\end{equation}
Hereafter we consistently use the notation for respective variables
to distinguish the two space-times.

It is easy to check that this change of variables
transfers the weak formulation of~\eqref{heat.shifted}
to the evolution problem
\begin{equation}\label{heat.weak.similar}
  \big\langle
  \tilde{v}, \tilde{u}'(s)
  -\mbox{$\frac{1}{2}$} \, y_1 \partial_{1}\tilde{u}(s)
  \big\rangle_{\!f_s}
  + \tilde{a}_{s}\big(\tilde{v},\tilde{u}(s)\big) = 0
  \,,
\end{equation}
for each $\tilde{v} \in H_0^1(\Omega_0)$ and a.e.~$s\in[0,\infty)$,
with $\tilde{u}(0) = \tilde{u}_0 := U_0 u_0 = u_0$.
Here $\langle\cdot,\cdot\rangle_{f_s}$
stands for the pairing of $H_0^1(\Omega_0,G_s)$
and its dual $[H_0^{1}(\Omega_0,G_s)]^*$,
where~$G_s$ is the metric of the form~\eqref{metric}
with~$f$ being replaced by~$f_s$,
and~$\tilde{a}_{s}(\cdot,\cdot)$ denotes the sesquilinear form
associated with
\begin{equation}\label{form.a.tilde}
\begin{aligned}
  \tilde{a}_{s}[\tilde{u}] &:=
  \|f_s^{-1}\partial_1\tilde{u}\|_{f_s}^2
  + e^s \;\! \|\partial_2\tilde{u}\|_{f_s}^2
  - e^s \;\! E_1 \;\! \|\tilde{u}\|_{f_s}^2
  - \frac{1}{4} \, \|\tilde{u}\|_{f_s}^2
  \,,
  \\
  \tilde{u} \in \Dom(\tilde{a}_{s}) &:= H_0^1(\Omega_0)
  \,.
\end{aligned}
\end{equation}
More specifically, $H_0^1(\Omega_0,G_s)$ denotes the completion
of $C_0^\infty(\Omega_0)$ with respect to the norm
$
  \|\cdot\|_{\Dom(h_{K_s})} :=
  (
  h_{K_s}[\cdot] + \|\cdot\|_{f_s}^2
  )^{1/2}
$,
where~$h_{K_s}$ is defined as~\eqref{form}
with~$f$ being replaced by~$f_s$.

\begin{Remark}
Note that~\eqref{heat.weak.similar}
is a parabolic equation with $s$-time-dependent coefficients.
The same occurs and has been previously analysed
for the heat equation in the twisted waveguides \cite{KZ1,KZ2},
for the heat equation in the plane with magnetic field~\cite{K7}
and also for a convection-diffusion equation in the whole space
but with a variable diffusion coefficient
\cite{Escobedo-Zuazua_1991,Duro-Zuazua_1999}.
A careful analysis of the behaviour of the underlying elliptic operators
as~$s$ tends to infinity leads to a sharp decay rate for its solutions.
An important difference of the present problem with respect to
the previous works is that also the Hilbert space becomes
time-dependent after the self-similarity transformation,
which makes the analysis substantially more difficult.
\end{Remark}
%

\subsection{The setting in weighted Sobolev spaces}
%
Since~$U_s$ acts as a unitary transformation,
it preserves the space norm of solutions
of~\eqref{heat.shifted} and~\eqref{heat.weak.similar}, \ie,
\begin{equation}\label{preserve}
  \|u(t)\|_{f} = \|\tilde{u}(s)\|_{f_s}
  \,.
\end{equation}
This means that we can analyse the asymptotic time behaviour
of the former by studying the latter.

However, the natural space to study the evolution~\eqref{heat.weak.similar}
is not $\sii_{f_s}(\Omega_0)$
but rather the weighted space $\sii_{wf_s}(\Omega_0)$
with the Gaussian weight~\eqref{weight}.
Following the approach of~\cite{KZ1}
based on a theorem of J.~L.~Lions~\cite[Thm.~X.9]{Brezis_FR}
about weak solutions of parabolic equations
with time-dependent coefficients,
it can be shown that~\eqref{heat.weak.similar}
is well posed in the scale of Hilbert spaces
\begin{equation}\label{scale}
  H_0^{1}(\Omega_0, w \;\! G_s)
  \subset \sii_{wf_s}(\Omega_0) \subset
  \big[H_0^{1}(\Omega_0, w \;\! G_s)\big]^*
  \,.
\end{equation}
Here $H_0^{1}(\Omega_0, w \;\! G_s)$ denotes the completion
of $C_0^\infty(\Omega_0)$ with respect to the norm
$
  (
  h_{K_s}[w^{1/2}\cdot\;\!]
  + \|\cdot\|_{w f_s}^2
  )^{1/2}
$.

More precisely, choosing $\tilde{v}:= w v$
for the test function in~\eqref{heat.weak.similar},
where $v \in C_0^\infty(\Omega_0)$ is arbitrary,
we can formally cast~\eqref{heat.weak.similar}
into the form
\begin{equation}\label{heat.weak.weighted}
  \big\langle v, \tilde{u}'(s) \big\rangle_w
  + a_s\big(v,\tilde{u}(s)\big) = 0
  \,.
\end{equation}
Here $\langle\cdot,\cdot\rangle_w$ denotes the pairing of
$H_0^{1}(\Omega_0, w \;\! G_s)$ and $[H_0^{1}(\Omega_0, w \;\! G_s)]^*$
and $a_s(\cdot,\cdot)$ denotes the sesquilinear form associated with
\begin{equation}\label{form.a}
\begin{aligned}
  a_s[\tilde{u}] \ := \ &
  \|f_s^{-1}\partial_1\tilde{u}\|_{w f_s}^2
  + e^s \;\! \|\partial_2\tilde{u}\|_{w f_s}^2
  - e^s \;\! E_1 \;\! \|\tilde{u}\|_{w f_s}^2
  - \frac{1}{4} \, \|\tilde{u}\|_{w f_s}^2
  \\
  & + \frac{1}{2} \left(y_1 \tilde{u},
  [f_s^{-2}-1] \;\! \partial_1\tilde{u}\right)_{w f_s}
  \,,
  \\
  \Dom(a_s) \ := \ & H_0^{1}(\Omega_0, w)
  \,,
\end{aligned}
\end{equation}
with $H_0^{1}(\Omega_0, w)$ denoting the closure of
$C_0^\infty(\Omega_0)$ with respect to the weighted Sobolev norm
$
  (
  \|\nabla\cdot\|_w^2
  + \|\cdot\|_{w}^2
  )^{1/2}
$.
Note the appearance of the extra term with respect to~\eqref{form.a.tilde}
(it makes the form~$a_s$ non-symmetric if the Hilbert space
$\sii_{wf}(\Omega_0)$ is considered to be complex).

By `formally' we mean that the formulae are meaningless in general,
because the solution~$\tilde{u}(s)$ and its derivative~$\tilde{u}'(s)$
may not belong to
$H_0^{1}(\Omega_0, w \;\! G_s)$
and $[H_0^{1}(\Omega_0, w \;\! G_s)]^*$,
respectively.
The justification of~\eqref{heat.weak.similar}
being well posed in the scale~\eqref{scale}
consists basically in checking the boundedness
and a coercivity of the form~$a_s$ defined on $\Dom(a_s)$
and in noticing that the time-dependent spaces
$\sii_{wf_s}(\Omega_0)$ and $H_0^{1}(\Omega_0, w \;\! G_s)$
coincide with $\sii_{w}(\Omega_0)$ and $H_0^{1}(\Omega_0,w)$,
respectively, as vector spaces.
It is straightforward by using~\eqref{Ass.basic.alt}
and Lemma~\ref{Lem.Taylor}.

\subsection{Reduction to a spectral problem}
%
Choosing $v := \tilde{u}(s)$ in~\eqref{heat.weak.weighted},
we arrive at the identity
\begin{equation}\label{formal}
  \frac{1}{2} \frac{d}{ds} \|\tilde{u}(s)\|_{w f_s}^2
  = - \hat{l}_s[\tilde{u}(s)]
  \,,
\end{equation}
where $\hat{l}_s[\tilde{u}] := \Re\{a_s[\tilde{u}]\}$,
$\tilde{u} \in \Dom(\hat{l}_s) := \Dom(a_s) = H_0^1(\Omega_0,w)$
(independent of~$s$ as a vector space).
It remains to analyse the coercivity of~$\hat{l}_s$.

More precisely, as usual for energy estimates,
we replace the right hand side of~\eqref{formal}
by the spectral bound, valid for each fixed $s \in [0,\infty)$,
\begin{equation}\label{spectral.reduction}
  \forall \tilde{u} \in \Dom(\hat{l}_s) \;\!, \qquad
  \hat{l}_s[\tilde{u}]
  \geq \nu_K(s) \, \|\tilde{u}\|_{w f_s}^2
  \,,
\end{equation}
where~$\nu_K(s)$ denotes the lowest point in the spectrum of
the self-adjoint operator~$\hat{L}_s$
associated on~$\sii_{wf_s}(\Omega_0)$ with~$\hat{l}_s$;
it depends on the curvature~$K$ through the dependence on~$f$.
Then~\eqref{formal} together with~\eqref{spectral.reduction} implies
the exponential bound
\begin{equation}\label{spectral.reduction.integral}
  \forall s \in [0,\infty) \;\!, \qquad
  \|\tilde{u}(s)\|_{w f_s}
  \leq \|\tilde{u}_0\|_{w f_0} \
  e^{-\int_0^s \nu_K(r) \, dr}
  \,.
\end{equation}

Finally, recall that the exponential bound in~$s$
transfers to a polynomial bound in the original time~$t$,
\cf~\eqref{space-times}.
In this way, the problem is reduced to a spectral analysis
of the family of operators $\{\hat{L}_s\}_{s \geq 0}$.

\subsection{Removing the weight}
%
In order to investigate the operator~$\hat{L}_s$
on $\sii_{w f_s}(\Omega_0)$,
we first map it into a unitarily equivalent operator
$L_s := \mathcal{U} \hat{L}_s \mathcal{U}^{-1}$
on $\sii_{f_s}(\Omega_0)$ via the unitary transform
$$
  \mathcal{U}\;\!\tilde{u} := w^{1/2}\,\tilde{u}
  \,.
$$
By definition, $L_s$~is the self-adjoint operator
associated on $\sii_{f_s}(\Omega_0)$ with the quadratic form
$
  l_s[v] := \hat{l}_s[\mathcal{U}^{-1}v]
$,
$
  v \in \Dom(l_s) := \mathcal{U}\,\Dom(\hat{l}_s)
$.
A straightforward calculation yields
\begin{equation}\label{J0.form}
\begin{aligned}
  l_s[v] \ = \ &
  \|f_s^{-1}\partial_1 v\|_{f_s}^2
  + e^s \;\! \|\partial_2 v\|_{f_s}^2
  - e^s \;\! E_1 \;\! \|v\|_{f_s}^2
  - \frac{1}{4} \, \|v\|_{f_s}^2
  \\
  & - \frac{1}{2} \, (y_1 v,\partial_1 v)_{f_s}
  + \frac{1}{16} \left(
  y_1 v, [2-f_s^{-2}] \;\! y_1 v
  \right)_{f_s}
  \,.
\end{aligned}
\end{equation}
Here and in the sequel, we assume that~$v$ is real,
which is justified by the positivity preserving property
of the heat equation as explained in Section~\ref{Sec.dynamics}.

For everywhere vanishing curvature, \ie~$K=0$,
we have that~$f$ is identically equal to one.
Consequently, $f_s=1$ for all $s \geq 0$.
Then, integrating by parts in the first term
on the second line of~\eqref{J0.form},
we get that~$l_s$ coincides
with the form~$l_s^0$ on $\sii(\Omega_0)$ defined by
\begin{equation}\label{J0.form.flat}
\begin{aligned}
  l_s^0[v] &:=
  \|\partial_1 v\|^2
  + e^s \;\! \|\partial_2 v\|^2
  - e^s \;\! E_1 \;\! \|v\|^2
  + \frac{1}{16} \, \|y_1v\|^2
  \,,
  \\
  \Dom(l_s^0) &:= H_0^1(\Omega_0) \cap \sii(\Omega_0,y_1^2\,dy)
  \,.
\end{aligned}
\end{equation}

In order to specify the domain of~$l_s$ for any curvature,
we assume~\eqref{Ass.compact}
and consider~$l_s$ as a perturbation of~$l_s^0$.
It follows from~\eqref{consequence.compact.initial} that
\begin{equation}\label{consequence.compact}
  |y_1| > e^{-s/2} R
  \quad\Longrightarrow\quad
  f_s(y)=1
  \,.
\end{equation}
In particular, $f_s(y)=1$ for all $|y_1| > R$.

\begin{Lemma}\label{Lem.domain}
Assume~\eqref{Ass.basic.alt} and~\eqref{Ass.compact}.
Then
$$
  \Dom(l_s) = \Dom(l_s^0)
  = H_0^1(\Omega_0) \cap \sii(\Omega_0,y_1^2\,dy)
  \,.
$$
\end{Lemma}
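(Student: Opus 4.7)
The approach is to unwind the definition $\Dom(l_s)=\mathcal{U}(\Dom(\hat{l}_s))=w^{1/2}\,H_0^1(\Omega_0,w)$ and identify this image space with $H_0^1(\Omega_0)\cap\sii(\Omega_0,y_1^2\,dy)$ by means of a classical Gaussian-weight integration by parts.

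First, I would observe that, by Lemma~\ref{Lem.Taylor} under hypothesis~\eqref{Ass.basic.alt}, the weight $f_s$ is uniformly bounded and uniformly positive on $\Omega_0$, so (as already indicated in the text after~\eqref{form.a}) the space $H_0^1(\Omega_0,w\,G_s)$ coincides, as a vector space, with $H_0^1(\Omega_0,w)$, independently of~$s$. Consequently it is enough to show that the multiplication $\mathcal{U}:\tilde u\mapsto v:=w^{1/2}\tilde u$ identifies $H_0^1(\Omega_0,w)$ with $H_0^1(\Omega_0)\cap\sii(\Omega_0,y_1^2\,dy)$.

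The key computation is the following. Since $w^{1/2}=e^{y_1^2/8}$, the Leibniz rule yields, for $\tilde u\in C_0^\infty(\Omega_0)$,
\begin{equation*}
  w^{1/2}\partial_1\tilde u = \partial_1 v - \frac{y_1}{4}\,v
  \,,\qquad
  w^{1/2}\partial_2\tilde u = \partial_2 v
  \,.
\end{equation*}
A single integration by parts in the $y_1$-variable (the boundary terms vanishing by the compact support of~$v$) produces the classical identity $(\partial_1 v,y_1 v) = -\frac{1}{2}\|v\|^2$, so that squaring the first relation above gives
\begin{equation*}
  \|\partial_1\tilde u\|_w^2
  = \left\|\partial_1 v - \frac{y_1}{4}v\right\|^2
  = \|\partial_1 v\|^2 + \frac{1}{4}\|v\|^2 + \frac{1}{16}\|y_1 v\|^2
  \,.
\end{equation*}
Combined with $\|\partial_2\tilde u\|_w=\|\partial_2 v\|$ and $\|\tilde u\|_w=\|v\|$, this shows that, on the common dense subset $C_0^\infty(\Omega_0)$, the graph norm of $\tilde u$ in $H_0^1(\Omega_0,w)$ is equivalent to the natural graph norm of $v$ in $H_0^1(\Omega_0)\cap\sii(\Omega_0,y_1^2\,dy)$. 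Passing to the completions along the bijection $\mathcal{U}$ then yields the claimed equality with $\Dom(l_s^0)$.

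The only delicate point I foresee is to ensure that the Dirichlet trace on $\partial\Omega_0=\Real\times\{\pm a\}$ is faithfully transferred by~$\mathcal{U}$; this is not a real obstacle because $w^{\pm 1/2}$ is smooth, strictly positive and locally bounded on $\overline{\Omega_0}$, and therefore respects $H_0^1$-type spaces. Let me note in passing that hypothesis~\eqref{Ass.compact} plays no essential role in the domain identification itself; it is only needed to regard $l_s$ as a perturbation of $l_s^0$ in the subsequent analysis.
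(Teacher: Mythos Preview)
Your argument is correct and is genuinely different from the paper's. You exploit the very definition $\Dom(l_s):=\mathcal{U}\,\Dom(\hat l_s)=w^{1/2}H_0^1(\Omega_0,w)$ and identify this image with $\Dom(l_s^0)$ via the Gaussian integration-by-parts identity; this is clean and, as you observe, does not use~\eqref{Ass.compact} at all. The paper instead works directly with the form $l_s$: it rewrites $l_s=l_s^0\text{-like part}+r_s$ as in~\eqref{J0.form.bis} and shows that $r_s$ is a relatively bounded perturbation of $l_s^0$ with relative bound less than one, for which the compact-support hypothesis~\eqref{Ass.compact} is used in an essential way (to bound factors of~$y_1$ on $\supp(f_s-1)$). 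What your route buys is economy and a sharper statement of hypotheses for the domain lemma in isolation. What the paper's route buys is that the decomposition~\eqref{J0.form.bis} and the estimates on $r_s$ are exactly the tools reused in the proof of Proposition~\ref{Prop.strong} (see the step leading to~\eqref{s.psi}), so its longer proof is an investment that pays off later; and, as the Remark following the lemma notes, the paper's argument simultaneously establishes closedness of~$l_s$ (and hence of~$a_s$) directly, rather than inferring it through~$\mathcal{U}$.
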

\begin{proof}
Using some rearrangement and integration by parts,
it is convenient to rewrite~\eqref{J0.form} as follows
\begin{equation}\label{J0.form.bis}
  l_s[v] =
  \|f_s^{-1}\partial_1 v\|_{f_s}^2
  + e^s \;\! \|\partial_2 v\|_{f_s}^2
  - e^s \;\! E_1 \;\! \|v\|_{f_s}^2
  + \frac{1}{16} \, \|y_1v\|_{f_s}^2
  + r_s[v]
  \,,
\end{equation}
where
\begin{equation*}
  r_s[v] :=
  - \frac{1}{4} \, \big(v,[f_s-1]v\big)
  - \frac{1}{2} \, \big(y_1 v,[f_s-1]\partial_1 v\big)
  - \frac{1}{16} \, \big(
  y_1 v, [f_s^{-1}-f_s] \;\! y_1 v
  \big)
  \,.
\end{equation*}
Using Lemma~\ref{Lem.Taylor}, it is easy to see that
for each $s \geq 0$ there exists a positive constant
$C=C(s,\|K\|_\infty,a)$
such that
$$
  C^{-1} \, l_s^0[v] \leq l_s[v]-r_s[v] \leq C \, l_s^0[v]
$$
for every $v \in \Dom(l_s^0)$.
Consequently (see, \eg, \cite[Corol.~4.4.3]{Davies}),
the quadratic form $l_s-r_s$
is closed on the domain~$\Dom(l_s^0)$ given by~\eqref{J0.form.flat}.
It remains to show that~$r_s$ is a relatively bounded perturbation
of~$l_s^0$ with relative bound smaller than one.
It is clear for the first term of~$r_s$ which is in fact
a bounded perturbation in view of Lemma~\ref{Lem.Taylor}.
We employ~\eqref{consequence.compact} to deal with the remaining terms.
For the second term we have,
\begin{align*}
  \big| \big(y_1 v,[f_s-1]\partial_1 v\big) \big|
  &\leq \|f-1\|_\infty \int_{\{|y_1|<e^{-s/2} R\}} |y_1| |v(y)| |\partial_1 v(y)| dy
  \\
  &\leq \|f-1\|_\infty \, e^{-s/2} R \, \|v\| \, \|\partial_1 v\|
  \\
  &\leq \|f-1\|_\infty \, R \,
  \left( \epsilon^{-1} \|v\|^2 + \epsilon \;\! \|\partial_1 v\|^2  \right)
\end{align*}
for every $v \in \Dom(l_s^0)$.
and any $\epsilon\in(0,1)$. Similarly,
\begin{equation*}
  \big| \big(y_1 v, [f_s^{-1}-f_s] \;\! y_1 v\big) \big|
  \leq \|f^{-1}-f\|_\infty \, R^2 \, \|v\|^2
  \,.
\end{equation*}
for every $v \in \Dom(l_s^0)$.
\end{proof}
\begin{Remark}
The proof of the lemma represents a direct way how to show
that the form~\eqref{J0.form} is closed on the domain $\Dom(l_s^0)$.
In view of the unitary equivalence~$\mathcal{U}$,
it also \emph{a posteriori} establishes the closedness of
the form~\eqref{form.a}.
\end{Remark}

As a consequence of Lemma~\ref{Lem.domain},
we get that~$L_s$ (and therefore~$\hat{L}_s$) has compact resolvent
and thus purely discrete spectrum for all $s \geq 0$.
In particular, $\nu_K(s)$~represents the lowest
eigenvalue of~$L_s$.

\subsection{The strong-resolvent convergence}
%
In order to study the decay rate via~\eqref{spectral.reduction.integral},
we need information about the limit of the eigenvalue~$\nu_K(s)$
as the time~$s$ tends to infinity.
This can be deduced from the asymptotic properties
of the resolvent of~$L_s$ for large~$s$.

In view of~\eqref{Ass.compact}, the function $y \mapsto f_s(y)$
converges to one locally uniformly in
$|y_1|>0$, $y_2\in(-a,a)$, as $s\to\infty$.
Moreover, the scaling of the transverse variable in~\eqref{J0.form}
corresponds to considering the operator~$L_0$
in the shrinking strip $\Real \times (-e^{-s/2}a,e^{-s/2}a)$.
This suggests that~$L_s$ will converge, in a suitable sense,
to the one-dimensional harmonic-oscillator operator
\begin{equation}\label{HO}
  h :=
  -\frac{d^2}{dy_1^2} + \frac{1}{16} \, y_1^2
  \qquad \mbox{on} \qquad
  \sii(\Real)
\end{equation}
(\ie\ the Friedrichs extension
of this operator initially defined on $C_0^\infty(\Real)$),
potentially subjected to an extra condition at the origin.
For further purposes, let us note that
the spectrum of~$h$ is known explicitly
(see any book on quantum mechanics, \eg, \cite[Sec.~2.3]{Griffiths})
\begin{equation}\label{HO.spec}
  \sigma(h) = \left\{ \frac{1}{2}
  \left(n+\frac{1}{2}\right)
  \right\}_{n=0}^\infty
  \,.
\end{equation}

We shall see that the difference between the negatively curved
and flat case consists in that the limit operator
for the former is subjected to an extra Dirichlet boundary condition at $y_1=0$.
Thus, simultaneously to~$h$ introduced in~\eqref{HO},
let us consider the self-adjoint operator~$h_D$ in~$\sii(\Real)$
whose quadratic form acts in the same way as that of~$h$
but has a smaller domain
$$
  \Dom(h_D^{1/2}) :=
  \big\{
  \varphi\in\Dom(h^{1/2})\ |\ \varphi(0)=0
  \big\}
  \,.
$$

To make this singular operator limits mentioned above rigorous
($L_s$ and~$h$ act on different spaces),
we decompose the Hilbert space~$\sii(\Omega_0)$
into an orthogonal sum
\begin{equation}\label{Hilbert.decomposition}
  \sii(\Omega_0) = \mathfrak{H}_1 \oplus \mathfrak{H}_1^\bot
  \,,
\end{equation}
where the subspace~$\mathfrak{H}_1$ consists of functions
of the form $\psi_1(y) = \varphi(y_1)\mathcal{J}_1(y')$.
Recall that~$\mathcal{J}_1$ denotes the positive
eigenfunction of $-\Delta_D^{(-a,a)}$ corresponding to~$E_1$,
normalized to~$1$ in $\sii((-a,a))$,
\cf~\eqref{spectrum.straight}.
Given any $\psi \in \sii(\Omega_0)$, we have the decomposition
$\psi = \psi_1 + \psi^\bot$ with $\psi_1\in\mathfrak{H}_1$ as above
and $\psi^\bot \in \mathfrak{H}_1^\bot$.
The mapping $\pi:\varphi\mapsto\psi_1$
is an isomorphism of $\sii(\Real)$ onto~$\mathfrak{H}_1$.
Hence, with an abuse of notations,
we may identify any operator~$h$ on $\sii(\Real)$
with the operator $\pi h \pi^{-1}$
acting on $\mathfrak{H}_1 \subset \sii(\Omega_0)$.

Finally, we mention that the Hilbert spaces $\sii(\Omega_0)$
and $\sii_{f_s}(\Omega_0)$ can be identified as vector sets
because their norms are equivalent.
More specifically, in view of Lemma~\ref{Lem.Taylor} and the definition~\eqref{fs},
we have
\begin{equation}\label{equivalent}
  1 - \frac{\|K\|_\infty \;\! a^2}{1-\|K\|_\infty \;\! a^2}
  \leq \frac{\,\|\psi\|_{f_s}^2}{\|\psi\|^2} \leq
  1 + \frac{\|K\|_\infty \;\! a^2}{1-\|K\|_\infty \;\! a^2}
  \,,
\end{equation}
for every non-zero $\psi \in \sii(\Omega_0)$ and all $s \geq 0$.

In the flat case, \ie~$K=0$,
it is readily seen that the operator~$L_s^0$
associated with the form~\eqref{J0.form.flat}
can be identified with the decomposed operator
\begin{equation}\label{s.decomposition}
  h \otimes 1 + 1 \otimes
  (- e^s \;\! \Delta_{D}^{(-a,a)} - e^s \;\! E_1)
  \qquad\mbox{in}\qquad
  \sii(\Real) \otimes \sii((-a,a))
  \,,
\end{equation}
where~$1$ denotes the identity operators in the appropriate spaces.
Using~\eqref{HO.spec}, it follows that $\nu_0(s) = 1/4$
for all $s \in [0,\infty)$. Consequently,
\begin{equation}\label{untwisted.infinity}
  \nu_0(\infty) := \lim_{s\to\infty} \nu_0(s) = 1/4 \,.
\end{equation}
Moreover, \eqref{s.decomposition}~can be used to show
that~$L_s^0$ converges to $h \oplus 0^\bot$
in the norm-resolvent sense as $s \to \infty$,
where~$0^\bot$ denotes the zero operator on~$\mathfrak{H}_1^\bot$.

It is more difficult (and more interesting) to establish
the asymptotic behaviour of~$\nu_K(s)$ for $K\not=0$.
A fine analysis of its limit leads to the key observation of the paper,
ensuring a gain of~$1/2$ in the decay rate in the negatively curved case.
This can be understood from the following proposition,
which represents the main auxiliary result of the present paper.
\begin{Proposition}\label{Prop.strong}
Assume~\eqref{Ass.basic.alt} and~\eqref{Ass.compact}.
Let the Hardy-type inequality \eqref{Hardy} holds.
Then the operator $L_s$ converges to
$
  h_D \oplus 0^\bot
$
in the strong-resolvent sense as $s \to \infty$, \ie,
\begin{equation*}
  \forall F \in \sii(\Omega_0) \,, \qquad
  \lim_{s \to \infty}
  \left\|
  \big(L_s+i\big)^{-1} F
  - \left[\big(h_D + i \big)^{-1} \oplus 0^\bot\right] F
  \right\|
  = 0
  \,.
\end{equation*}
\end{Proposition}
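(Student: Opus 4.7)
The plan is to reduce the strong-resolvent convergence to a Mosco-type convergence of the associated quadratic forms, with the new and delicate ingredient being that the Hardy-type inequality~\eqref{Hardy} forces an extra Dirichlet condition at $y_1=0$ in the limit. Fix $F \in \sii(\Omega_0)$ and set $G_s := (L_s+i)^{-1}F$, so that $l_s(v,G_s)+i(v,G_s)_{f_s}=(v,F)_{f_s}$ for every $v\in\Dom(l_s)$. Testing against $v=G_s$ and taking real and imaginary parts, together with~\eqref{equivalent}, gives $\|G_s\|_{f_s}\le\|F\|_{f_s}$ and $l_s[G_s]\le C\|F\|^2$. Using the decomposition~\eqref{J0.form.bis} with Lemma~\ref{Lem.Taylor} and~\eqref{consequence.compact}, this yields uniform control of $\|\partial_1 G_s\|$, $\|y_1 G_s\|$, $\|\partial_2 G_s\|$ and of the non-negative transverse quantity $e^s\bigl(\|\partial_2 G_s\|_{f_s}^2-E_1\|G_s\|_{f_s}^2\bigr)$.

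By a weighted Rellich-Kondrachov compactness argument, a subsequence $G_{s_n}$ converges weakly in $H^1(\Omega_0)\cap\sii(\Omega_0,(1+y_1^2)dy)$ and strongly in $\sii(\Omega_0,(1+y_1^2)dy)$ to some limit $G$. Decomposing $G_s=G_s^{\parallel}+G_s^{\bot}$ along~\eqref{Hilbert.decomposition} and using the transverse spectral gap $E_2-E_1>0$, one gets $e^s(E_2-E_1)\|G_s^{\bot}\|^2\le l_s[G_s]+\mathrm{corrections}\le C$, so $G_s^{\bot}\to 0$ strongly in $\sii(\Omega_0)$. Hence $G(y)=\varphi(y_1)\mathcal{J}_1(y_2)$ for some $\varphi\in H^1(\Real)\cap\sii(\Real,y_1^2 dy_1)\subset\Dom(h^{1/2})$.

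The main step and chief obstacle is the identity $\varphi(0)=0$, which is what promotes $h$ to $h_D$. Transporting~\eqref{Hardy} through the self-similarity unitary $U_s$ (noting that under the rescaling $x_1=e^{s/2}y_1$ the ambient form rescales by $e^s$) yields
\begin{equation*}
  \|f_s^{-1}\partial_1 v\|_{f_s}^2+e^s\|\partial_2 v\|_{f_s}^2-e^sE_1\|v\|_{f_s}^2
  \ \ge\ \int_{\Omega_0}e^s\,\rho(e^{s/2}y_1,y_2)\,|v(y)|^2\,f_s(y)\,dy,
\end{equation*}
so $l_s[G_s]$ dominates the concentrating potential modulo controlled corrections involving $\|y_1 G_s\|$. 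Since $\rho^{-1}$ is locally bounded, there exist $\delta,r>0$ with $\rho\ge\delta$ on $\{|y_1|\le r,\ y_2\in(-a,a)\}$, and hence $e^s\rho(e^{s/2}\cdot)\ge\delta e^s$ on the shrinking slab $A_s:=\{|y_1|\le re^{-s/2}\}$, giving $e^s\int_{A_s}|G_s|^2 dy\le C$. Rescaling $z_1=e^{s/2}y_1$, the rescaled profile $\tilde G_s(z_1,y_2):=G_s(e^{-s/2}z_1,y_2)$ satisfies $\|\tilde G_s\|_{H^1(\{|z_1|<r\}\times(-a,a))}\to 0$, so by the trace inequality $G_s(0,\cdot)\to 0$ in $\sii((-a,a))$. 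Since $G_s\rightharpoonup\varphi\mathcal{J}_1$ weakly in $H^1(\Omega_0)$, continuity of the $y_1=0$ trace forces $\varphi(0)\mathcal{J}_1=0$, i.e.\ $\varphi(0)=0$.

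Finally, to identify $\varphi=(h_D+i)^{-1}F_1$, where $F_1$ is the projection of $F$ onto $\mathfrak{H}_1$, I test the resolvent identity against functions $v(y)=\psi(y_1)\mathcal{J}_1(y_2)$ with $\psi\in C_0^\infty(\Real\setminus\{0\})$. By~\eqref{consequence.compact}, on $\supp\psi$ we have $f_s=1$ for $s$ large, so the $f_s$-dependent corrections in~\eqref{J0.form.bis} vanish and the limit of $l_s(v,G_s)+i(v,G_s)_{f_s}$ recovers the harmonic-oscillator sesquilinear form applied to $(\psi,\varphi)$. This gives $(h+i)\varphi=F_1$ distributionally on $\Real\setminus\{0\}$. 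Combined with $\varphi\in\Dom(h_D^{1/2})$ from $\varphi(0)=0$, the orthogonal decomposition $\sii(\Real)=\sii(\Real_+)\oplus\sii(\Real_-)$ into the two half-line Dirichlet harmonic oscillators yields $\varphi=(h_D+i)^{-1}F_1$. Uniqueness of the limit forces convergence of the whole family, and the strong $L^2(\Omega_0)$ convergence follows from weak convergence together with the uniform operator bound $\|(L_s+i)^{-1}\|\le 1$ via standard arguments.
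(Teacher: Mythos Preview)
Your overall architecture matches the paper's proof closely: uniform a priori bounds in the weighted $H^1$ space, compactness, transverse projection to land in $\mathfrak{H}_1$, Hardy inequality to force $\varphi(0)=0$, and testing with $\psi\in C_0^\infty(\Real\setminus\{0\})$ to identify the limit. There is, however, one genuine gap and one imprecision worth fixing.

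The gap is your assertion that the ``non-negative transverse quantity'' $e^s\bigl(\|\partial_2 G_s\|_{f_s}^2-E_1\|G_s\|_{f_s}^2\bigr)$ is non-negative. With the weight~$f_s$ present this is in general \emph{false}: for each fixed $y_1$ the one-dimensional quantity $\int(|\partial_2 v|^2-E_1|v|^2)f_s\,dy_2$ is governed by~$\mu_K$ from~\eqref{lambda}, which need not be non-negative (this is precisely what happens for positively curved strips, cf.~Theorem~\ref{Thm.disc}). Consequently the bound $l_s[G_s]\le C\|F\|^2$ alone does \emph{not} give you the separate control of $\|\partial_1 G_s\|$, $\|y_1 G_s\|$ and of the transverse part: the potentially very negative term $e^s(\dots)$ could swallow the others. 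The paper resolves this by invoking the Hardy inequality already at the a priori stage: it keeps a fraction $(1-2\epsilon)$ of $\|f_s^{-1}\partial_1 G_s\|_{f_s}^2$ together with the full transverse block, changes back to unscaled coordinates, and applies~\eqref{Hardy} (plus Remark~\ref{Rem.uniform} to make the Hardy weight depend on $x_1$ only) to bound that combination below by a non-negative concentrating weight; only then do the individual bounds follow. You do transport Hardy through~$U_s$ later, but you need it \emph{first}, before you can claim any of the uniform bounds.

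The imprecision is in your trace argument for $\varphi(0)=0$: the rescaled function $\tilde G_s$ does \emph{not} satisfy $\|\tilde G_s\|_{H^1}\to 0$, because $\|\partial_{y_2}\tilde G_s\|^2=e^{s/2}\int_{A_s}|\partial_2 G_s|^2$ has no reason to vanish. What does go to zero is the anisotropic combination $\|\tilde G_s\|_{L^2}^2+\|\partial_{z_1}\tilde G_s\|_{L^2}^2$, and that suffices for the trace at $z_1=0$ by the one-dimensional embedding applied for a.e.~$y_2$. The paper avoids this altogether: it first projects onto~$\mathcal{J}_1$ to get a one-dimensional family $\varphi_s$, uses the bound $e^s\|\tilde\rho_s^{1/2}\varphi_s\|^2\le C$ (with $\tilde\rho$ reduced to a function of $x_1$ only via Remark~\ref{Rem.uniform}, so that the lower bound on~$\rho$ near the boundary is not an issue), observes that $e^{s/2}\tilde\rho_s$ tends distributionally to a multiple of~$\delta_0$, and concludes $\varphi_\infty(0)=0$ from the local $C^{0,\lambda}$ convergence of~$\varphi_{s_n}$. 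Either route works once the details are in place.
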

%
%
\begin{proof}
For the clarity of the exposition,
we divide the proof into several steps.
The equivalence of norms~\eqref{equivalent}
and other consequences of Lemma~\ref{Lem.Taylor}
are widely used in the present proof.

\smallskip\noindent
\emph{1.~The resolvent equation for~$L_s$.}
Let $F \in \sii(\Omega_0)$. Then also $F \in \sii_{f_s}(\Omega_0)$
for every $s \geq 0$ due to~\eqref{equivalent}.
Let~$z$ be a sufficiently large positive number
to be specified later.
We set $\psi_s := (L_s+z)^{-1}F$.
In other words, $\psi_s$~satisfies the resolvent equation
\begin{equation}\label{re}
  \forall v \in \Dom(l_s) \,, \qquad
  l_s(v,\psi_s) + z \, (v,\psi_s)_{f_s}
  = (v,F)_{f_s}
  \,.
\end{equation}
In particular, choosing~$\psi_s$ for the test function~$v$ in~\eqref{re},
we have
\begin{equation}\label{resolvent.identity}
  \forall v \in \Dom(l_s) \,, \qquad
  l_s[\psi_s] + z \, \|\psi_s\|_{f_s}^2
  = (\psi_s,F)_{f_s} \leq \|\psi_s\|_{f_s} \|F\|_{f_s}
  \,.
\end{equation}

\smallskip\noindent
\emph{2.~Boundedness of~$\psi_s$.}
Our primary objective is to deduce from~\eqref{resolvent.identity} that
$\{\psi_s\}_{s \geq 0}$ is a bounded family in the space
$
  \mathfrak{D}_0 :=
  H_0^1(\Omega_0) \cap \sii(\Omega_0,y_1^2\,dy)
$
equipped with the intersection topology.

We search a lower bound to the operator $L_s+z$.
Using the convenient form~\eqref{J0.form.bis} for $l_s[\psi_s]$
and proceeding as in the proof of Lemma~\ref{Lem.domain},
we easily check that
\begin{equation}\label{s.psi}
  |r_s[\psi_s]| \leq C \,
  \big(
  \epsilon \;\! \|f_s^{-1}\partial_1\psi_s\|_{f_s}^2
  + \epsilon^{-1}\|\psi_s\|_{f_s}^2
  \big)
  \,.
\end{equation}
with any $\epsilon\in(0,1)$, where~$C$ is a positive constant
depending on $\|K\|_\infty\;\!a^2$ and~$R$.
Hence
\begin{align}\label{s.lower}
  l_s[\psi_s] + z \, \|\psi_s\|_{f_s}^2
  \ \geq \ &
  (1-2\epsilon) \;\! \|f_s^{-1}\partial_1\psi_s\|_{f_s}^2
  + e^s \;\! \|\partial_2 \psi_s\|_{f_s}^2
  - e^s \;\! E_1 \;\! \|\psi_s\|_{f_s}^2
  \nonumber \\
  & + \epsilon \;\! \|f_s^{-1}\partial_1\psi_s\|_{f_s}^2
  + \frac{1}{16} \, \|y_1\psi_s\|_{f_s}^2
  + (z - C\;\!\epsilon^{-1}) \, \|\psi_s\|_{f_s}^2
  \,.
\end{align}
If we choose~$z$ larger than $C\;\!\epsilon^{-1}$,
all the terms on the second line are non-negative.

To get a non-negative lower bound to the first line
on the right hand side of~\eqref{s.lower},
we introduce a new function~$u_s$ by
$\psi_s(y)=e^{s/4} u_s(e^{s/2} y_1,y_2)$
(\cf~the self-similarity transformation~\eqref{SST}).
Making the change of variables $(x_1,x_2)=(e^{s/2} y_1,y_2)$,
recalling the definition~\eqref{lambda}
and using the Hardy-type inequality~\eqref{Hardy},
we obtain
\begin{eqnarray}\label{unself}
\lefteqn{
  (1-2\epsilon) \;\! \|f_s^{-1}\partial_1\psi_s\|_{f_s}^2
  + e^s \;\! \|\partial_2 \psi_s\|_{f_s}^2
  - e^s \;\! E_1 \;\! \|\psi_s\|_{f_s}^2}
  \nonumber \\
  && = e^s \;\! (1-2\epsilon) \;\! \|f^{-1}\partial_1 u_s\|_{f}^2
  + e^s \;\! \|\partial_2 u_s\|_{f}^2
  - e^s \;\! E_1 \;\! \|u_s\|_{f}^2
  \nonumber \\
  && \geq e^s \;\! (1-2\epsilon) \;\! \|\rho^{1/2} u_s\|_{f}^2
  + e^s \;\! 2 \epsilon \, (u_s,\mu_{K}u_s)_{f}
  \,.
\end{eqnarray}
Here $\rho$~is a positive function and,
as pointed out in Remark~\ref{Rem.uniform},
we may assume that it depends on~$x_1$ only.
On the other hand, $\mu_K$~has compact support
due to~\eqref{consequence.compact.initial}.
Hence, we can choose~$\epsilon$ sufficiently small so that
the new Hardy weight
$
  \tilde{\rho}(x_1) := (1-2\epsilon) \rho(x_1) + 2 \epsilon \mu_{K}(x_1)
$
is positive for almost every $x_1 \in \Real$.
Coming back to our coordinates~$y$, we thus conclude from~\eqref{unself}
\begin{equation*}
  (1-2\epsilon) \;\! \|f_s^{-1}\partial_1\psi_s\|_{f_s}^2
  + e^s \;\! \|\partial_2 \psi_s\|_{f_s}^2
  - e^s \;\! E_1 \;\! \|\psi_s\|_{f_s}^2
  \geq e^s \;\! \|\tilde{\rho}_s^{1/2}\psi_s\|_f^2
  \,,
\end{equation*}
where $\tilde{\rho}_s(y_1):=\rho(e^{s/2}y_1)$.

Using the last inequality in~\eqref{s.lower}
and employing Lemma~\ref{Lem.Taylor},
we eventually arrive at
\begin{equation}\label{s.lower.ultimate}
  l_s[\psi_s] + z \, \|\psi_s\|_{f_s}^2
  \geq
  c \left(
  e^s \;\! \|\tilde{\rho}_s^{1/2}\psi_s\|^2
  + \|\partial_1\psi_s\|^2
  + \|y_1\psi_s\|^2
  + (z - C\;\!\epsilon^{-1}) \, \|\psi_s\|^2
  \right)
  ,
\end{equation}
where~$c$ is a positive constant depending on $\|K\|_\infty\;\!a^2$.
Comparing this inequality with~\eqref{resolvent.identity},
we see that there exists a constant~$z_0$,
depending on~$a$ and properties of~$K$,
such that for all $z \geq z_0$
\begin{equation}\label{s.bounded}
  \|\psi_s\| \leq C \, \|F\| \,,
  \qquad
  \|y_1\psi_s\| \leq C \, \|F\|  \,,
  \qquad
  \|\partial_1\psi_s\| \leq C \, \|F\| \,,
\end{equation}
and
\begin{equation}\label{s.delta.scaling}
  e^s \;\! \|\tilde{\rho}_s^{1/2}\psi_s\|^2
  \leq C \, \|F\|^2 \,,
\end{equation}
with some constant~$C$ depending on~$a$ and properties of~$K$.
Furthermore, directly from~\eqref{s.lower} and~\eqref{resolvent.identity}
with help of~\eqref{s.bounded}, we also get
\begin{equation}\label{s.bounded.transverse}
  \|\partial_2\psi_s\| \leq C \, \|F\| \,,
\end{equation}

The estimate~\eqref{s.lower.ultimate}
also shows that $L_s+z$ is invertible for all $z \geq z_0$.
This, \emph{a posteriori}, justifies the definition of~$\psi_s$
as the unique solution of~\eqref{re}.

From~\eqref{s.bounded} and~\eqref{s.bounded.transverse},
we conclude that
$\{\psi_s\}_{s \geq 0}$ is a bounded family in~$\mathfrak{D}_0$.
Therefore it is precompact in the weak topology of~$\mathfrak{D}_0$.
Let~$\psi_\infty$ be a weak limit point,
\ie, for an increasing sequence of positive numbers $\{s_n\}_{n\in\Nat}$
such that $s_n \to \infty$ as $n \to \infty$,
$\{\psi_{s_n}\}_{n\in\Nat}$
converges weakly to~$\psi_\infty$ in $\mathfrak{D}_0$.
Actually, we may assume that it converges strongly in $\sii(\Omega_0)$
because $\mathfrak{D}_0$ is compactly embedded in $\sii(\Omega_0)$.

\smallskip\noindent
\emph{3.~Transverse mode decomposition of~$\psi_s$.}
Now we employ the Hilbert space decomposition~\eqref{Hilbert.decomposition}
and write
$
  \psi_s(y)=\varphi_s(y_1)\mathcal{J}_1(y_2) + \psi_s^\bot(y)
$,
where $\psi_s^\bot \in \mathfrak{H}_1^\bot$, \ie,
\begin{equation}\label{orthogonality}
  \big(\mathcal{J}_1,\psi_s^\bot(y_1,\cdot)\big)_{\sii((-a,a))} = 0
\end{equation}
for a.e.\ $y_1 \in \Real$.
It follows from~\eqref{s.bounded}, \eqref{s.bounded.transverse}
 and~\eqref{orthogonality} that
also $\{\psi_s^\bot\}_{s \geq 0}$ is a bounded family in~$\mathfrak{D}_0$
and that $\{\varphi_s\}_{s \geq 0}$ is a bounded family in
$H^1(\Real)\cap\sii(\Real,y_1^2 \, dy_1)$
equipped with the intersection topology.
We denote by~$\psi_\infty^\bot$ and $\varphi_\infty$
the respective limit points.

We come back to~\eqref{resolvent.identity} with~\eqref{s.lower}
and focus on the inequality
\begin{equation}\label{s.transverse.bis}
  e^s \;\! \|\partial_2 \psi_s\|_{f_s}^2
  - e^s \;\! E_1 \;\! \|\psi_s\|_{f_s}^2
  \leq C \, \|F\|^2
\end{equation}
we have already used to get~\eqref{s.bounded.transverse}.
In the same way as we proceeded to get~\eqref{lambda.bis},
we write $\phi_s := \sqrt{f_s}\;\!\psi_s$ and obtain
\begin{equation}\label{s.potential}
  \|\partial_2 \psi_{s}\|_{f_{s}}^2
  - E_1 \;\! \|\psi_{s}\|_{f_{s}}^2
  = \|\partial_2 \phi_{s}\|^2
  - E_1 \;\! \|\phi_{s}\|^2
  + (\phi_s,V_s\;\!\phi_s)
  \,,
\end{equation}
where~$V_s$ is defined in the same way as~\eqref{potential}
but with~$K$ and~$f$ being replaced by~$K_s$ and~$f_s$, respectively.
Using~\eqref{consequence.compact}, it is possible to check that
$\{\phi_{s_n}\}_{n\in\Nat}$ is strongly converging in $\sii(\Omega_0)$;
in fact,
\begin{equation}\label{s.convergence}
  \lim_{n\to\infty} \|\phi_{s_n}-\psi_\infty\| = 0
  \,.
\end{equation}

Using the fact that the scaled potential~$V_s$ in~\eqref{s.potential}
vanishes for all $|y_1| > e^{-s/2} R$ together with the strong convergence
of~$\{\phi_{s_n}\}_{n\in\Nat}$,
it is easy to see that the integral containing the potential
tends to zero as $n\to\infty$,
after passing to the subsequence~$\{s_n\}_{n\in\Nat}$.
Multiplying~\eqref{s.transverse.bis} by~$e^{-s_n}$
and putting the asymptotically vanishing integral
on the right hand side of the inequality, we thus get
\begin{equation}\label{s.translimit}
  \lim_{n\to\infty}
  \left(
  \|\partial_2 \phi_{s_n}\|^2
  - E_1 \;\! \|\phi_{s_n}\|^2
  \right)
  = 0
  \,.
\end{equation}

Using in addition the Hilbert space decomposition~\eqref{Hilbert.decomposition}
of~$\phi_s$, \ie\ $\phi_s(y)=\eta_s(y_1)\mathcal{J}_1(y_2)+\phi_s^\bot(y)$,
we see that the same limit~\eqref{s.translimit}
holds for $\phi_{s_n}^\bot \in \mathfrak{H}_1^\bot$ as well.
In that limit, we use the estimate
$
  \|\partial_2 \phi_{s_n}^\bot\|^2
  \geq E_2 \;\! \|\phi_{s_n}^\bot\|^2
$,
where $E_2=4E_1$ denotes the second eigenvalue of $-\Delta_D^{(-a,a)}$,
and conclude that $\|\phi_{s_n}^\bot\|$ tends to zero as $n\to\infty$.
The latter together with~\eqref{s.convergence} finally yields
\begin{equation}\label{s.translimit.ultimate}
  \lim_{n\to\infty}
  \|\psi_{s_n}^\bot\|
  = 0
  \qquad\mbox{and}\qquad
  \lim_{n\to\infty}
  \|\eta_{s_n}-\varphi_\infty\|_{\sii(\Real)}
  = 0
  \,.
\end{equation}
That is, $\psi_\infty \in \mathfrak{H}_1$.

\smallskip\noindent
\emph{4.~The Dirichlet condition at zero.}
Now we come back to the inequality~\eqref{s.delta.scaling}.
Recall that $\tilde{\rho}_{s}(y_1)=\tilde{\rho}_{s}(e^{s/2}y_1)$
and that~$\tilde{\rho}$ is positive
(although necessarily vanishing at infinity).
Without loss of generality, we may assume that~$\tilde{\rho}$
in~\eqref{s.delta.scaling} belongs to $L^1(\Real)$
(since we can always replace the estimate using a smaller function).
Then $e^{s/2} \tilde{\rho}_{s}$ converges in the sense of distributions
on~$\Real$ to a Dirac delta at $y_1=0$.
We want to use this heuristic consideration to show
that $\varphi_\infty(0)=0$.

To do so, first, we use the Hilbert space
decomposition~\eqref{Hilbert.decomposition} of~$\psi_s$
and notice that the left hand side of~\eqref{s.delta.scaling}
splits into a sum of two non-negative parts,
the mixed term being zero due to~\eqref{orthogonality}.
Second, multiplying the obtained inequality for
the term involving the $\mathfrak{H}_1$-part of~$\psi_s$ by $e^{-s/2}$,
passing to the subsequence~$\{s_n\}_{n\in\Nat}$
and taking the limit $n\to\infty$,
we arrive at
\begin{equation*}
  |\varphi_\infty(0)|^2 \int_\Real \tilde{\rho}(x_1) \, dx_1
  = 0
  \,.
\end{equation*}
The limiting procedure is justified by recalling that
$\{\varphi_{s_n}\}_{n\in\Nat}$ converges weakly in $H^1(\Real)$
and therefore strongly in $H^1(J)$,
which is compactly embedded in $C^{0,\lambda}(J)$
for every $\lambda\in(0,1/2)$,
where~$J$ is any bounded interval of~$\Real$.

Since the integral of~$\tilde{\rho}$ is positive by our hypotheses,
we thus verify that~$\varphi_\infty$ satisfies the extra
Dirichlet condition
\begin{equation*}
  \varphi_\infty(0) = 0
  \,.
\end{equation*}

\smallskip\noindent
\emph{5.~The resolvent equation for~$L_s$ as $s\to\infty$.}
Let us summarize our results. We have obtained that
the solutions~$\psi_{s_n}$ of~\eqref{re} converge
in the weak topology of~$\mathfrak{D}_0$
and in the strong topology of $\sii(\Omega_0)$
to some~$\psi_\infty$.
Moreover, the limiting solution~$\psi_\infty$ belongs to~$\mathfrak{H}_1$,
so that $\psi_\infty(y)=\varphi_\infty(y_1)\mathcal{J}_1(y_2)$
with some
$
  \varphi_\infty \in H^1(\Real)\cap\sii(\Real,y_1^2 \, dy_1)
  = \Dom(h)
$.
Finally, $\varphi_\infty(0)=0$,
so that actually $\varphi_\infty \in \Dom(h_D)$.

Recall that the set $C_0^\infty(\Real\!\setminus\!\{0\})$
is dense in $\Dom(h_D)$.
Let $\varphi \in C_0^\infty(\Real\!\setminus\!\{0\})$ be arbitrary.
We take $v(y) := \varphi(y_1) \mathcal{J}_1(y_2)$
as the test function in~\eqref{re}
and note that~$\varphi$ and $f_s-1$ have disjoint support
for sufficiently large~$s$ due to~\eqref{consequence.compact}.
Sending~$n$ to infinity in~\eqref{re} with~$s$ being replaced by~$s_n$,
we thus easily check that
\begin{equation*}
  (\dot\varphi,\dot\varphi_\infty)_{\sii(\Real)}
  + \frac{1}{16} \, (y_1\varphi,y_1\varphi_\infty)_{\sii(\Real)}
  + z \, (\varphi,\varphi_\infty)_{\sii(\Real)}
  = (\varphi,f)_{\sii(\Real)}
  \,,
\end{equation*}
where $f(y_1) := (\mathcal{J}_1,F(y_1,\cdot))_{\sii((-a,a))}$.
That is, $\varphi_\infty = (h_D+z)^{-1} f$,
for \emph{any} weak limit point of $\{\varphi_s\}_{s \geq 0}$.

In conclusion, we have shown that~$\psi_{s}$
converges strongly to $\psi_\infty$
in $\sii(\Omega_0)$ as $s \to \infty$,
where
$
  \psi_\infty(y)
  := \varphi_\infty(y_1)\mathcal{J}_1(y_2)
  = \big[(h_D+z)^{-1} \oplus 0^\bot \big] F(y)
$.

\smallskip\noindent
\emph{6.~The strong convergence for other values of the spectral parameter.}
Finally, let us argue that we can replace the real number~$z$
by any non-real number. This is actually a consequence of
\cite[Thm.~VIII.1.3]{Kato}, the fact that~$L_s$ is
self-adjoint on $\sii_{f_s}(\Omega_0)$
and the equivalence of this Hilbert space with $\sii(\Omega_0)$,
to which we consider the limit of the strong convergence,
due to~\eqref{equivalent}.
\end{proof}
\begin{Remark}\label{Rem.unself}
The crucial step in the proof is certainly the usage
of the Hardy inequality~\eqref{Hardy}.
Indeed, it enables us, first, to ensure
the non-negativity of the right hand side of~\eqref{s.lower}
and, second, to establish the extra Dirichlet condition at zero.
\end{Remark}
%

\subsection{Spectral consequences}
%
Assume for a moment that Proposition~\ref{Prop.strong} stated
that the operator~$L_s$ converges to $h_D \oplus 0^\bot$
in the \emph{norm-resolvent} sense as $s\to\infty$.
Then we would immediately know that~$\nu_K(s)$ converges
to the first eigenvalue of~$h_D$ as $s \to \infty$.
In view of the symmetry, the first eigenvalue of~$h_D$
coincides with the second eigenvalue of~$h$,
which is~$3/4$ due to~\eqref{HO.spec}.
Hence, under the hypotheses of proposition~\ref{Prop.strong},
we would have that the limit of~$\nu_K(s)$ as $s\to\infty$
is three-times larger than the same
limit in the flat case~\eqref{untwisted.infinity}.

Unfortunately,
the strong-resolvent convergence of Proposition~\ref{Prop.strong}
is not sufficient to guarantee the convergence of spectra.
In general, this is true for eigenvalues of the limiting operator
which are \emph{stable} under the perturbation (\cf~\cite[Sec.~VIII.1]{Kato}).
In our case, however, the spectral convergence can be established
directly using the fact that both~$L_s$ and~$h_D$ are operators
with compact resolvents. Using the compactness, the convergence
of eigenvalues follow by a straightforward modification
of the proof of Proposition~\ref{Prop.strong}.
In particular, we have the following result for the lowest eigenvalue,
exactly as we would have under the norm-resolvent convergence
described above.
\begin{Corollary}\label{Corol.strong}
Under the hypotheses of Proposition~\ref{Prop.strong},
one has
\begin{equation*}
  \nu_K(\infty) := \lim_{s\to\infty} \nu_K(s) = 3/4
  \,.
\end{equation*}
\end{Corollary}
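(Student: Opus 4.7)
The plan is to establish the two matching inequalities $\limsup_{s\to\infty} \nu_K(s) \leq 3/4$ and $\liminf_{s\to\infty} \nu_K(s) \geq 3/4$ separately. The upper bound is the easier one and proceeds by a direct trial-function computation in the variational characterization $\nu_K(s) = \inf \{l_s[v]/\|v\|_{f_s}^2 : v \in \Dom(l_s) \setminus \{0\}\}$. Let $\varphi_D \in \Dom(h_D)$ be the normalized ground state of $h_D$ (so $h_D\varphi_D = (3/4)\varphi_D$), and take the test function $v_*(y) := \varphi_D(y_1)\,\mathcal{J}_1(y_2)$. Using~\eqref{consequence.compact}, the factor $f_s$ equals $1$ outside the shrinking strip $\{|y_1| < e^{-s/2}R\}$; since $\varphi_D(0)=0$ gives $|\varphi_D(y_1)|^2 = O(y_1^2)$ near the origin, the contribution of this shrinking set to each term of $l_s[v_*]$ is $o(1)$. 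The transverse part $e^s(\|\partial_2 v_*\|_{f_s}^2 - E_1\|v_*\|_{f_s}^2)$ would vanish identically if $f_s \equiv 1$ (by $\ddot{\mathcal{J}}_1+E_1\mathcal{J}_1=0$ and integration by parts), and the above localization shows the actual value is $o(1)$ as $s\to\infty$. The remaining terms converge to $\|\varphi_D'\|_{L^2(\Real)}^2 + \tfrac{1}{16}\|y_1\varphi_D\|_{L^2(\Real)}^2 = (\varphi_D,h_D\varphi_D) = 3/4$, while $\|v_*\|_{f_s}^2 \to 1$. This yields $\limsup_{s\to\infty} \nu_K(s) \leq 3/4$.

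For the lower bound I would adapt Steps~2--5 of the proof of Proposition~\ref{Prop.strong}. Since $L_s$ has compact resolvent (Lemma~\ref{Lem.domain}), $\nu_K(s)$ is attained: let $\psi_s$ be an eigenfunction with $L_s\psi_s=\nu_K(s)\psi_s$ and $\|\psi_s\|_{f_s}=1$. The already-proved upper bound together with the coercivity estimate \eqref{s.lower.ultimate} (applied with $\nu_K(s)$ in place of $-z$) yield that $\{\psi_s\}_{s\geq 0}$ is a bounded family in $\mathfrak{D}_0 = H_0^1(\Omega_0) \cap L^2(\Omega_0,y_1^2\,dy)$, and that \eqref{s.delta.scaling} holds. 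Passing to a subsequence $s_n\to\infty$, the compact embedding $\mathfrak{D}_0 \hookrightarrow L^2(\Omega_0)$ gives strong $L^2$-convergence $\psi_{s_n} \to \psi_\infty$, and the dominated convergence together with $f_{s_n}\to 1$ a.e.\ yields $\|\psi_\infty\|=1$, in particular $\psi_\infty \neq 0$. Repeating verbatim Steps~3 and~4 of the proof of Proposition~\ref{Prop.strong}, one obtains $\psi_\infty(y) = \varphi_\infty(y_1)\,\mathcal{J}_1(y_2)$ with $\varphi_\infty \in H^1(\Real) \cap L^2(\Real,y_1^2\,dy_1)$ and $\varphi_\infty(0)=0$, i.e.\ $\varphi_\infty \in \Dom(h_D)$.

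It remains to identify $\nu := \lim_{n\to\infty}\nu_K(s_n)$ (which exists after further subsequence extraction, by the a~priori bound from the upper-bound step) as an eigenvalue of $h_D$. Test the eigenvalue equation $l_{s_n}(v,\psi_{s_n}) = \nu_K(s_n)(v,\psi_{s_n})_{f_{s_n}}$ with $v(y) := \chi(y_1)\,\mathcal{J}_1(y_2)$, $\chi \in C_0^\infty(\Real\setminus\{0\})$. For $n$ large, $\supp\chi$ is disjoint from $\{|y_1| < e^{-s_n/2}R\}$, so $f_{s_n} \equiv 1$ on $\supp v$ and the remainder $r_{s_n}(v,\psi_{s_n})$ vanishes; the transverse bracket $e^{s_n}[(\partial_2 v,\partial_2 \psi_{s_n})_{f_{s_n}} - E_1(v,\psi_{s_n})_{f_{s_n}}]$ also vanishes identically after integration by parts in $y_2$ (since $\ddot{\mathcal{J}}_1+E_1\mathcal{J}_1=0$). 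The remaining finite terms pass to the limit by the weak convergence in $\mathfrak{D}_0$ and yield $(\chi',\varphi_\infty')_{L^2(\Real)} + \tfrac{1}{16}(y_1\chi,y_1\varphi_\infty)_{L^2(\Real)} = \nu\,(\chi,\varphi_\infty)_{L^2(\Real)}$. Since $C_0^\infty(\Real\setminus\{0\})$ is a form core for $h_D$, this identifies $\varphi_\infty$ as an eigenfunction of $h_D$ with eigenvalue $\nu$. Because the lowest eigenvalue of $h_D$ is $3/4$ (the second eigenvalue of the harmonic oscillator $h$ by parity, using~\eqref{HO.spec}), we obtain $\nu \geq 3/4$, hence $\liminf_{s\to\infty}\nu_K(s) \geq 3/4$.

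The main obstacle, as already signalled in Proposition~\ref{Prop.strong} and its proof, is the non-uniform behaviour of $f_s$ at $y_1=0$ together with the fact that the limit operator $h_D \oplus 0^\bot$ has $0$ in its spectrum, so strong-resolvent convergence alone does \emph{not} force $\nu_K(s) \to 3/4$. The Hardy inequality \eqref{Hardy} — used exactly as in Step~2 of the proof of Proposition~\ref{Prop.strong} to derive the decisive bound \eqref{s.delta.scaling} — is what prevents the eigenfunction $\psi_s$ from concentrating on $\mathfrak{H}_1^\bot$ (and there acquiring a near-zero eigenvalue), and what forces $\varphi_\infty(0)=0$ so that $\psi_\infty$ lies in the correct domain $\Dom(h_D)$ rather than $\Dom(h)$, which is the source of the factor-of-three improvement.
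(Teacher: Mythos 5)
Your argument is correct and essentially reproduces the paper's proof: both take $\psi_s$ to be the normalized ground-state eigenfunction of $L_s$, establish an a~priori bound on $\nu_K(s)$, and rerun Steps~3--5 of the proof of Proposition~\ref{Prop.strong} (via $F=(\nu_K(s)+z)\psi_s$ uniformly bounded) to obtain a non-trivial limit $\varphi_\infty\in\Dom(h_D)$ satisfying $h_D\varphi_\infty=\nu\varphi_\infty$. The only, minor and equally valid, deviation is how the eigenvalue is pinned down to $3/4$: you compute $\limsup_{s\to\infty}\nu_K(s)\leq 3/4$ directly from the trial function $\varphi_D\,\mathcal{J}_1$ (the decay $|\varphi_D(y_1)|^2=O(y_1^2)$ being exactly what kills the $e^s$-weighted transverse term on the shrinking set $\{|y_1|<e^{-s/2}R\}$) and then invoke $\sigma(h_D)\subset\{3/4,7/4,\dots\}$, whereas the paper proves only boundedness of $\nu_K(s)$ via a trial function supported away from $\supp(f-1)$ and instead identifies $\nu_K(\infty)$ as the \emph{lowest} eigenvalue of $h_D$ from the positivity of $\varphi_\infty$, inherited from the positive ground states $\psi_s$.
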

\begin{proof}
First of all, let us notice that~$\nu_K(s)$ remains bounded as $s \to \infty$.
This is easily seen by the Rayleigh-Ritz variational formula for
the lowest eigenvalue of~$L_s$, in which we use the trial function
of the form $\psi(y) := \varphi(y_1) \mathcal{J}_1(y_2)$,
where $\varphi \in C_0^\infty(\Real)$ is supported outside
$
  \supp(f-1) \supseteq \supp(f_s-1)
$
(\cf~\eqref{consequence.compact}). Indeed,
\begin{equation}\label{ev.bound}
  \nu_K(s) \leq
  \frac{l_s[\psi]}{\,\|\psi\|_{f_s}^2}
  = \frac{\|\dot\varphi\|_{\sii(\Real)}^2
  +\frac{1}{16} \, \|y_1\varphi\|_{\sii(\Real)}^2}
  {\|\varphi\|_{\sii(\Real)}^2}
  \,,
\end{equation}
irrespectively of the properties of~$K$.

Now, let~$\psi_s$ be the positive eigenfunction of~$L_s$
corresponding to~$\nu_K(s)$,
normalized to~$1$ in $\sii_{f_s}(\Omega_0)$.
$\psi_s$ is a solution of the problem~\eqref{re} with $F:=(\nu_K(s)+z)\psi_s$.
It is important that~$F$ is uniformly bounded in~$s$
as an element of $\sii_{f_s}(\Omega_0)$,
due to~\eqref{ev.bound} and the normalization of~$\psi_s$.
Then we can proceed exactly as in the proof of Proposition~\ref{Prop.strong}.

We show that $\{\psi_s\}_{s \geq 0}$ contains a subsequence
$\{\psi_{s_n}\}_{n \in \Nat}$ which is weakly converging
to some~$\psi_\infty$ in~$\mathfrak{D_0}$.
Since~$\mathfrak{D_0}$ is compactly embedded in $\sii(\Omega_0)$,
we know that $\{\psi_{s_n}\}_{n \in \Nat}$ converges to~$\psi_\infty$
strongly in $\sii(\Omega_0)$.
In particular, $\|\psi_\infty\| = 1$,
so that we know that~$\psi_\infty$ is non-trivial.
At the same time, we show that $\psi_\infty \in \mathfrak{H}_1$
and that
$
  \varphi_\infty(y_1)
  := (\mathcal{J}_1,\psi_\infty(y_1,\cdot))_{\sii((-a,a))}
$
vanishes at $y_1=0$.

Taking $v(y) := \varphi(y_1) \mathcal{J}_1(y_2)$
with $\varphi \in C_0^\infty(\Real\!\setminus\!\{0\})$
as the test function in the weak formulation
of the eigenvalue problem~\eqref{re},
with~$s$ being replaced by~$s_n$, and sending~$n$ to infinity,
we eventually find that~$\varphi_\infty$ is an eigenfunction of $h_D+z$
with the eigenvalue $\nu_K(\infty)+z$.
Since~$\psi_\infty$ is obtained as a limit of positive functions,
we know that~$\varphi_\infty$ is positive as well.
Hence, $\nu_K(\infty)$~represents the lowest eigenvalue of~$h_D$.

It remains to recall that the first eigenvalue of~$h_D$ coincides
with the second eigenvalue of~$h$, which is~$3/4$ due to~\eqref{HO.spec}.
\end{proof}
%

\subsection{A spectral bound to the decay rate}\label{Sec.improved}
%
We come back to~\eqref{spectral.reduction.integral}.
Assume $K=0$ or that there exists a Hardy-type inequality~\eqref{Hardy}.
Recalling~\eqref{untwisted.infinity} and Corollary~\ref{Corol.strong},
we know that for arbitrarily small positive number~$\eps$
there exists a (large) positive time~$s_\eps$ such that
for all $s \geq s_\eps$, we have $\nu_K(s) \geq \nu_K(\infty) - \eps$.
Hence, fixing $\eps>0$, we have
\begin{align*}
  {-\int_0^s \nu_K(r) \, dr}
  &\leq {-\int_0^{s_\eps} \nu_K(r) \, dr} {-[\nu_K(\infty)-\eps](s-{s_\eps})}
  \\
  &\leq \int_0^{s_\eps} |\nu_K(r)| \, dr
  + {[\nu_K(\infty)-\eps] s_\eps} {-[\nu_K(\infty)-\eps] s}
\end{align*}
for all $s \geq s_\eps$.
At the same time, assuming $\eps \leq 1/4$, we trivially have
$$
  {-\int_0^s \nu_K(r) \, dr}
  \leq \int_0^{s_\eps} |\nu_K(r)| \, dr
  + {[\nu_K(\infty)-\eps] s_\eps} {-[\nu_K(\infty)-\eps] s}
$$
also for all $s \leq s_\eps$.
Summing up, for every $s \in [0,\infty)$, we have
\begin{equation}\label{instead}
  \|\tilde{u}(s)\|_{w f_s}
  \leq C_\eps \, e^{-[\nu_K(\infty)-\eps]s} \, \|\tilde{u}_0\|_{w f_0}
  \,,
\end{equation}
where
$
  C_\eps := e^{
  \int_0^{s_\eps} |\nu_K(r)| \, dr + [\nu_K(\infty)-\eps] s_\eps
  }
$.

Now we return to the original variables $(x,t)$ via~\eqref{space-times}.
Using~\eqref{preserve} together with the point-wise estimate $1 \leq w$,
and recalling that $f_0=f$ and  $\tilde{u}_0=u_0$,
it follows from~\eqref{instead} that
$$
  \|u(t)\|_{f}
  = \|\tilde{u}(s)\|_{f_s}
  \leq \|\tilde{u}(s)\|_{w f_s}
  \leq C_\eps \, (1+t)^{-[\nu_K(\infty)-\eps]} \, \|u_0\|_{w f}
$$
for every $t \in [0,\infty)$.
Consequently, we conclude with
$$
  \big\|e^{-(H_K-E_1)t}\big\|_{\sii_{wf}(\Omega_0) \to \sii_f(\Omega_0)}
  = \sup_{u_0 \in \sii_{wf}(\Omega_0)\setminus\{0\}}
  \frac{\|u(t)\|_f}{\ \|u_0\|_{wf}}
  \leq C_\eps \, (1+t)^{-[\mu_\theta(\infty)-\eps]}
$$
for every $t \in [0,\infty)$.
Since~$\eps$ can be made arbitrarily small,
this bound implies
\begin{equation}\label{rate-ev}
  \Gamma_K \geq \nu_K(\infty)
  \,.
\end{equation}
%

\subsection{The improved decay rate}\label{Sec.improved.bis}
%
Now we arrive to the main result of this paper.
It follows from Proposition~\ref{Prop.straight} that
$\Gamma_0 = 1/4$ (\ie~$K=0$).
The lower bound $\Gamma_0 \geq 1/4$ alternatively follows
from~\eqref{rate-ev} using~\eqref{untwisted.infinity}.
The following theorem states that the decay rate
is three times better in the presence
of a Hardy-type inequality~\eqref{Hardy}.
\begin{Theorem}\label{t:main}
Assume~\eqref{Ass.basic.alt} and~\eqref{Ass.compact}.
If~\eqref{Hardy} holds, then
$$
  \Gamma_K = 3/4
  \,.
$$
\end{Theorem}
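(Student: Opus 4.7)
The lower bound $\Gamma_K \geq 3/4$ is already in hand: inequality \eqref{rate-ev} gives $\Gamma_K \geq \nu_K(\infty)$, and Corollary~\ref{Corol.strong} identifies $\nu_K(\infty) = 3/4$. It therefore remains to prove the matching upper bound $\Gamma_K \leq 3/4$, which, by the definition \eqref{rate}, reduces to exhibiting a single non-zero $u_0 \in \sii_{wf}(\Omega_0)$ and a constant $c > 0$ such that $\|e^{-(H_K-E_1)t}u_0\|_f \geq c\,(1+t)^{-3/4}$ for all sufficiently large $t$, since such a bound rules out the estimate of \eqref{rate} for every $\Gamma > 3/4$.

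I would choose $u_0(x) := \varphi_0(x_1)\,\mathcal{J}_1(x_2)$ with $\varphi_0 \in C_0^\infty(\Real)$ non-negative and not identically zero. This choice is dictated by Proposition~\ref{Prop.strong} together with the argument in the proof of Corollary~\ref{Corol.strong}: a subsequence of the positive $\sii_{f_s}$-normalized ground-state eigenfunctions $\psi_s$ of $L_s$ converges strongly in $\sii(\Omega_0)$ to $\psi_\infty = \varphi_\infty(y_1)\,\mathcal{J}_1(y_2)$, where $\varphi_\infty(y_1) \propto |y_1|\,e^{-y_1^2/8}$ is the non-negative normalized ground state of $h_D$ (eigenvalue $3/4$). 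With the sign assumption on $\varphi_0$, the limiting pairing $(u_0,\psi_\infty)_f$ is strictly positive.

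Passing to the self-similar variables $\tilde u(s) := U_s u(e^s - 1)$, so that $\|u(t)\|_f = \|\tilde u(s)\|_{f_s}$, I would orthogonally decompose $\tilde u(s) = c_K(s)\,\psi_s + r_K(s)$ in $\sii_{f_s}$. The trivial inequality $\|\tilde u(s)\|_{f_s} \geq |c_K(s)|$ reduces the task to proving $|c_K(s)| \gtrsim e^{-3s/4}$ for large $s$. Starting from \eqref{heat.weak.similar} and using $L_s \psi_s = \nu_K(s)\,\psi_s$, the coefficient $c_K(s)$ satisfies an ODE of the schematic form $c_K'(s) = -\nu_K(s)\,c_K(s) + \mathcal{R}(s)$, where the remainder $\mathcal{R}(s)$ collects contributions from the $s$-variation of both the metric $f_s$ and the eigenfunction $\psi_s$, and is controlled by $\|r_K(s)\|_{f_s}$. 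A Gronwall estimate, using the uniform-in-$s$ spectral gap above the eigenvalues of $L_s$ converging to $3/4$ (whose size tends to $7/4 - 3/4 = 1$, inherited via $h_D$ from \eqref{HO.spec}) to force $\|r_K(s)\|_{f_s}$ to decay exponentially faster than $e^{-3s/4}$, then gives $|c_K(s)| \sim |c_K(0)|\,\exp\!\bigl(-\int_0^s\nu_K(r)\,dr\bigr) \gtrsim e^{-3s/4}$, as required.

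The principal obstacle is the fully non-autonomous character of the self-similar problem: the ambient Hilbert space $\sii_{f_s}(\Omega_0)$, the form domain of $L_s$, and the eigenfunction $\psi_s$ are all $s$-dependent, so standard autonomous-semigroup tools do not apply directly. Three technical ingredients are needed to make the Gronwall argument rigorous: (i)~strong $\sii(\Omega_0)$-convergence $\psi_s \to \psi_\infty$ without passing to subsequences, which follows once one establishes that the non-negative ground state of $h_D$ is unique; (ii)~a uniform-in-$s$ spectral gap of size $1 - o(1)$ above the $3/4$-eigenspace of $L_s$, obtained by applying the strong-resolvent framework of Proposition~\ref{Prop.strong} to higher eigenvalues and comparing with the spectrum \eqref{HO.spec}; and (iii)~a bound on $\|\partial_s \psi_s\|_{f_s}$ via analytic perturbation theory applied to $L_s$, exploiting that its $s$-dependence enters exclusively through $f_s$, which stabilizes to $1$ at an explicit exponential rate by \eqref{consequence.compact}. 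The overall strategy parallels the twisted-waveguide analysis of~\cite{KZ1,KZ2}.
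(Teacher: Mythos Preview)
Your lower bound argument ($\Gamma_K \geq 3/4$ via \eqref{rate-ev} and Corollary~\ref{Corol.strong}) matches the paper exactly.

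For the upper bound the paper takes an entirely different and much shorter route. Instead of working in self-similar variables, it chooses a test function $\varphi \in C_0^\infty(\Omega_0)$ supported in the \emph{flat} region $\Omega_0 \setminus \Omega_R$ (where $f\equiv 1$ by~\eqref{consequence.compact.initial}) and uses domain monotonicity: inserting an extra Dirichlet wall at $\partial\Omega_R$ only decreases the semigroup, so $e^{-tH_K}\varphi(x) \geq \mathbb{E}_x[\varphi(X_t),\,\tau_{\Omega_{0R}}>t]$ on the half-strip $\Omega_{0R}=(R,\infty)\times(-a,a)$. On $\Omega_{0R}$ the geometry is Euclidean, so the kernel factors explicitly as the half-line Dirichlet heat kernel $p_0(t,x_1,y_1)$ times the transverse eigenfunction expansion, and an elementary computation gives $\|e^{-tH_K}\varphi\|_f \geq c_\varphi\,t^{-3/4}e^{-E_1 t}$. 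The whole argument fits in a few lines and uses nothing beyond the explicit flat model.

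Your proposed route---tracking the ground-state projection $c_K(s)$ through the non-autonomous self-similar evolution---is a natural idea, but the obstacles you list are real and not yet overcome by anything in the paper. In particular: the paper establishes only \emph{strong}-resolvent convergence (Proposition~\ref{Prop.strong}), so a uniform spectral gap of size $1-o(1)$ above $\nu_K(s)$ is not available from what has been proved; and without that gap the Gronwall step is circular, since controlling $\mathcal{R}(s)$ requires $\|r_K(s)\|_{f_s}$ to decay strictly faster than $e^{-3s/4}$, which is precisely the refined decay one is trying to establish. Likewise, differentiability of $s\mapsto\psi_s$ is not furnished by the strong-resolvent framework and would require a separate (and non-trivial) argument given the $s$-dependent Hilbert space. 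The paper's comparison argument sidesteps all of this by reducing to an explicitly solvable problem.
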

\begin{proof}
The assertion $\Gamma_K \geq 3/4$ follows from~\eqref{rate-ev}
using Corollary~\ref{Corol.strong}.
In order to prove the $\Gamma_K \leq 3/4$ it is sufficient to show,
that for some suitable function $\varphi \in C_0^{\infty}(\Omega_0)$,
some constant $c_{\varphi} > 0$ and some constant $t_0 \geq 0$
\begin{equation}\label{e:lowerboundwanted}
\forall t \geq t_0, \quad
\bigl\|e^{-tH_K}\varphi\bigr\|_{L^2_f(\Omega_0)}
\geq c_{\varphi}\,t^{-3/4} e^{E_1 t}.
\end{equation}
Due to \eqref{Ass.compact} the support of $f$ is contained in a rectangle
$\Omega_R:= (-R,R)\times (-a,a)$ for $R>0$.
We choose $\varphi \in C_0^{\infty}(\Omega_0)$
such that $\supp(\varphi) \subset \Omega_0\setminus \Omega_R$.
Recall that $\mathbb{E}_x$ (respectively, $\mathbb{P}_x$)
denote the expectation (respectively, probability measure) corresponding to the
Markov process $(X_t)_{t \geq 0}$ associated to the Dirichlet form~$h_k$.
Define the stopping times $\tau_{ \Omega_0}$ and $\tau_{ \Omega_R}$ by
\begin{displaymath}
  \tau_{ \Omega_0}=\inf\lbrace t \geq 0 \mid X_t \in  \Omega_0\rbrace
  \quad\text{  and   }\quad
  \tau_{\Omega_R}=\inf\lbrace t \geq 0 \mid X_t \in \partial \Omega_R\rbrace.
\end{displaymath}
The process $(X_t)_{0 \leq t < \tau_{ {\Omega_0}}}$
is called Brownian motion on $\Omega_0$ killed at the boundary.
For every $x=(x_1,x_2) \in Q_0$ we then conclude
\begin{equation}\label{e:kill_estim}
\begin{split}
e^{-tH_K}\varphi(x) = \mathbb{E}_x\bigl[\varphi(X_t),;\tau_{ \Omega_0}\bigr] \geq \mathbb{E}_x\bigl[\varphi(X_t),\tau_{\Omega_R}\wedge \tau_{ \Omega_0}>t\bigr],
\end{split}
\end{equation}
where $\tau_{ \Omega_R}\wedge \tau_{ \Omega_0}$ denotes the minimum
of the stopping times $\tau_{\Omega_0}$ and $\tau_{\Omega_R}$.
Integration of \eqref{e:kill_estim}
and using $K  \restriction \Omega_0\setminus\Omega_R = 0$
and hence -- by Lemma \ref{Lem.Taylor} --
$f = 1$ in $\Omega_0 \setminus \Omega_R$ yields
\begin{equation}\label{e:lboundonL2}
\begin{split}
\bigl\|e^{-t H_K}\varphi\bigr\|_{L^2_f(\Omega_0)}^2
&\geq \int_{(R, \infty) \times(-a,a)}\bigl|\mathbb{E}_x
\bigl[\varphi(X_t),\tau_{ \Omega_R}\wedge \tau_{\Omega_0}>t\bigr]
\bigr|^2\,dx \\
&= \int_{(R, \infty) \times(-a,a)}\bigl|\mathbb{E}_x
\bigl[\varphi(X_t),\tau_{ \Omega_{0R}}>t\bigr]\bigr|^2\,dx
\end{split}
\end{equation}
where $\Omega_{0R}:=(R,\infty)\times (-a,a)$ and
\begin{displaymath}
\tau_{\Omega_{0R}}=\inf\lbrace t \geq 0\mid X_t \in \partial\Omega_{0R}\rbrace.
\end{displaymath}
Due to $f = 1$ in $\Omega_0 \setminus \Omega_R$ the stochastic process $(X_t)_{\tau_{\Omega_{0R}}> t \geq 0}$ is a (deterministically time changed by the factor 2) Brownian motion killed, when exiting the set $\Omega_{0R}$. Due to independence of the coordinates we have
\begin{equation}\label{e:killingkernel}
\mathbb{E}_x\bigl[\varphi(X_t),\tau_{\Omega_R}\wedge \tau_{ \Omega_0}>t\bigr] = \sum_{n=1}^{\infty}e^{-E_nt}\mathcal{J}_n(x_1)\int_{\Omega_{0,R}}p_0(t,x_1,y_1)\mathcal{J}_n(_2)\varphi(y_1,y_2)\,dy,
\end{equation}
where
\begin{displaymath}
p_0(t,x,y)
:= \frac{1}{\sqrt{4\pi t}}\bigl(e^{-\frac{(x-y)^2}{4t}}-e^{\frac{(x+y)^2}{4t}}\bigr)
\end{displaymath}
is the transition function of a one-dimensional Brownian motion killed
when hitting $0$. Using \eqref{e:lboundonL2} and \eqref{e:killingkernel}
an elementary calculation gives assertion~\eqref{e:lowerboundwanted}.
\end{proof}
Observe that the proof of Theorem \ref{t:main} demonstrates
that the `transient' effect of negative curvature
on the survival properties of a Brownian particle is as strong as
if we kill a particle when entering the curved region.

\subsection{From normwise to pointwise bounds}
%
Theorem \ref{t:main} may be reformulated in terms
of certain pointwise assertions.
\begin{Corollary}\label{c:pointwise}

Assume~\eqref{Ass.basic.alt}, ~\eqref{Ass.compact} as well as the \eqref{Hardy}.
Let $x \in \Omega_0$, $\delta > 0$ and a measurable bounded subset
$B \subset \Omega_0$ be given.
Then there exists a constant $C_{B,\delta,x}>0$ such that
\begin{displaymath}
\mathbb{P}_x\bigl(X_t \in B, \tau_{\Omega_0}>t\bigr)
\leq C_{B,\delta,x}\, e^{-E_1 t}\,t^{-\frac{3}{2}+\delta}
\,.
\end{displaymath}
\end{Corollary}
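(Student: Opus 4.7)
Starting from the probabilistic representation and the continuity of the transition kernel~$k_t$ guaranteed by Proposition~\ref{p:fundprop}, we have
$$
  \mathbb{P}_x\bigl(X_t \in B,\tau_{\Omega_0}>t\bigr)
  = \int_B k_t(x,y)\,f(y)\,dy
  \leq |B|_f \; \sup_{y\in B} k_t(x,y),
$$
where $|B|_f := \int_B f(y)\,dy$. It therefore suffices to establish the pointwise heat-kernel bound
$k_t(x,y) \leq \tilde{C}_{x,y,\delta}\,e^{-E_1 t}\,t^{-3/2+\delta}$
for large $t$, locally uniformly in $y$.

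The strategy is a classical $TT^*$-type symmetrization that upgrades the normwise bound of Theorem~\ref{t:main} to a pointwise one. Fix a small positive time~$s_0$, to be chosen below, and for $t>2s_0$ combine the Chapman--Kolmogorov identity with the self-adjointness of $e^{-rH_K}$ on $\sii_f(\Omega_0)$ to write
$$
  k_t(x,y)
  = \big(k_{s_0}(x,\cdot),\, e^{-(t-2s_0)H_K}k_{s_0}(y,\cdot)\big)_f
  = \big(e^{-(t-2s_0)H_K/2}k_{s_0}(x,\cdot),\; e^{-(t-2s_0)H_K/2}k_{s_0}(y,\cdot)\big)_f.
$$
Cauchy--Schwarz then gives
$$
  k_t(x,y) \leq \prod_{z\in\{x,y\}}
  \big\|e^{-(t-2s_0)H_K/2}\, k_{s_0}(z,\cdot)\big\|_f,
$$
and Theorem~\ref{t:main} in its shifted form
$\|e^{-rH_K}u\|_f \leq C_\delta\,e^{-E_1 r}\,(1+r)^{-3/4+\delta/2}\,\|u\|_{wf}$
applied to each factor produces precisely the desired exponent: the two polynomial factors multiply to $(1+(t-2s_0)/2)^{-3/2+\delta} \leq \mathrm{const}\cdot t^{-3/2+\delta}$ and the two exponential factors combine to $e^{-E_1(t-2s_0)}$, which equals $e^{-E_1 t}$ up to a constant depending only on~$s_0$.

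The only delicate point is to verify that $k_{s_0}(z,\cdot)\in \sii_{wf}(\Omega_0)$ with norm locally bounded in~$z$, so that the two factors on the right above are finite and uniform in $y\in B$. The Gaussian upper bound of Proposition~\ref{p:fundprop} gives
$$
  k_{s_0}(x,y)^2\,w(y)\,f(y) \;\leq\; \frac{C_1^2\,\|f\|_\infty}{s_0^2} \,
  \exp\!\left(-\frac{2|x-y|^2}{C_2 s_0} + \frac{y_1^2}{4}\right),
$$
whose exponent is negative for large $|y_1|$ precisely when $s_0<8/C_2$. Fixing $s_0$ in this range makes $\|k_{s_0}(z,\cdot)\|_{wf}$ finite and, by the local uniformity of the Gaussian bound, locally bounded in~$z$. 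This is the main technical obstacle; it forces $s_0$ to be chosen independently of $t$ but in terms of the (a priori unknown) constant~$C_2$, and means that the pointwise result is really a two-step argument, in which the weighted $L^2$-decay of Theorem~\ref{t:main} is squared through semigroup composition and then converted into a pointwise bound by the short-time kernel estimate. Absorbing $|B|_f$, the two weighted norms $\|k_{s_0}(x,\cdot)\|_{wf}$ and $\sup_{y\in B}\|k_{s_0}(y,\cdot)\|_{wf}$, the constants from Theorem~\ref{t:main}, and the powers of~$s_0$ into $C_{B,\delta,x}$ finishes the proof.
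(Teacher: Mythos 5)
Your proof is correct and takes essentially the same approach as the paper: a semigroup factorization that inserts a short-time kernel (whose membership in $\sii_{wf}(\Omega_0)$ follows from the Gaussian bound of Proposition~\ref{p:fundprop} provided the short time is small enough), followed by Cauchy--Schwarz and the normwise decay of Theorem~\ref{t:main}. The only cosmetic difference is that you symmetrize by inserting the short-time kernel at both endpoints and take a supremum over $B$, whereas the paper inserts it only at $x$ and keeps $\chi_B$ (which lies in $\sii_{wf}$ since $B$ is bounded) as the other Cauchy--Schwarz factor.
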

\begin{proof}
We use that according to Proposition \ref{p:fundprop} the integral kernel $e^{-tH_K}(x,y)$ of $e^{-tH_K}$ satisfies
the following Gaussian upper bound
\begin{displaymath}
e^{-tH_K}(x,y) \leq \frac{c_1}{\sqrt{4\pi t}}\, e^{-\frac{(x-y)^2}{4c_2t}}
\end{displaymath}
for some constants $c_1,c_2>0$.
For fixed $x$ set
$$
  \psi_{x,\varepsilon}(y)
  =\frac{c_1}{\sqrt{4\pi \varepsilon}} \, e^{-\frac{(x-y)^2}{4c_2\varepsilon}},
$$
where $\varepsilon$ is chosen small enough such that
$\psi_{x,\varepsilon} \in L^2_{wf}(\Omega_0)$.
Then for $t>\varepsilon $ we have for some constant $C_{\delta}>0$
\begin{equation*}
\begin{split}
\mathbb{P}_x\bigl(X_t \in B, \tau_{\Omega_0}>t\bigr)
&= e^{-tH_K}\chi_B(x) = e^{-\varepsilon H_K}e^{-(t-\varepsilon)H_K}\chi_B(x) \\
&= \bigl(\psi_{x,\varepsilon},e^{-(t-\varepsilon)H_K}\chi_B\bigr)_{f} \\
&= \bigl(e^{-\frac{t-\varepsilon}{2}H_K}\psi_{x,\varepsilon},
e^{-\frac{t-\varepsilon}{2}H_K}\chi_B \bigr)_{f} \\
&\leq \|\psi_{x,\varepsilon}\|_{wf} \, \|\chi_B\|_{wf}
\left[ C_{\delta} \,
\bigl(\frac{t-\varepsilon}{2}\bigr)^{-(\frac{3}{4}-\delta/2)}
e^{-E_1\frac{t-\varepsilon}{2}}\right]^2,
\end{split}
\end{equation*}
where the last inequality follows using the Cauchy-Schwarz inequality
and Theorem \ref{t:main} have been used.
This implies the assertion of the Corollary.
\end{proof}
\begin{Remark}
In the case of positively curved manifolds
satisfying hypotheses~\eqref{Ass.basic.alt}
and~\eqref{Ass.compact}, the decay rate of
$\mathbb{P}_x\bigl(X_t \in B, \tau_{\Omega_0}>t\bigr)$ is exactly exponential,
whereas in the situation of a flat manifold on has $t^{-1/2}e^{-E_1 t}$.

In terms of Tweedie's $R$-theory (see \cite{T74b} and \cite{T74a})
one can therefore conclude that a Brownian particle
in a positively curved tube satisfying condition~\eqref{Ass.basic.alt}
and~\eqref{vanish}
is $E_1$-positive recurrent,
in a flat manifold the Brownian particle is $E_1$-null recurrent
and in the negatively curved tube satisfying \eqref{Ass.basic.alt}
and~\eqref{Ass.compact} the Brownian motion is $E_1$-transient.
\end{Remark}
Let us finally reformulate our findings in the negatively curved case
in another way using conditional probabilities, again.
\begin{Corollary}
Assume~\eqref{Ass.basic.alt} and~\eqref{Ass.compact}. Let $x \in \Omega_0$, $\delta > 0$ and a measurable bounded subset $B \subset \Omega_0$ be given. Then there exists a constant $\tilde{C}_{B,\delta,x}>0$ such that
\begin{displaymath}
\mathbb{P}_x\bigl(X_t \in B \mid \tau_{\Omega_0}>t\bigr)\leq  \tilde{C}_{b,\delta,x}t^{-1+\delta}
\end{displaymath}
\end{Corollary}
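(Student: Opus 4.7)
The plan is to write the conditional probability as the quotient
$$
  \mathbb{P}_x(X_t \in B \mid \tau_{\Omega_0}>t)
  = \frac{\mathbb{P}_x(X_t \in B,\, \tau_{\Omega_0}>t)}
  {\mathbb{P}_x(\tau_{\Omega_0}>t)}
$$
and to estimate numerator and denominator separately. The numerator is already handled by Corollary~\ref{c:pointwise}, which applied with $\delta/2$ in place of $\delta$ yields an upper bound of order $t^{-3/2+\delta/2}\,e^{-E_1 t}$. Everything therefore reduces to producing the matching lower bound
$$
  \mathbb{P}_x(\tau_{\Omega_0}>t) \;\geq\; c_x\,t^{-1/2}\,e^{-E_1 t}
$$
valid for all sufficiently large $t$ with some positive constant $c_x$.

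To derive this lower bound I plan to exploit the compactness hypothesis \eqref{Ass.compact} together with its key consequence \eqref{consequence.compact.initial}, that the metric is Euclidean outside the rectangle $\Omega_R = (-R,R) \times (-a,a)$. Applying the Markov property at time $1$,
$$
  \mathbb{P}_x(\tau_{\Omega_0}>t) \;\geq\;
  \mathbb{P}_x(X_1 \in A,\,\tau_{\Omega_0}>1)\;\cdot\;
  \inf_{y \in A}\mathbb{P}_y(\tau_{\Omega_0}>t-1),
$$
where $A$ is a bounded open set whose closure lies inside the flat half-strip $\tilde{\Omega} := (R,\infty)\times(-a,a)$. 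The first factor is a strictly positive $x$-dependent constant, owing to the strong Feller property and the Gaussian estimate in Proposition~\ref{p:fundprop}. For the second factor I observe that on $\tilde{\Omega}$ the generator $H_K$ reduces to the flat Dirichlet Laplacian (since $f\equiv 1$ there), so the process killed upon leaving $\tilde{\Omega}$ coincides in law with the two-dimensional Euclidean Brownian motion in $\tilde{\Omega}$ killed at its boundary. Consequently $\mathbb{P}_y(\tau_{\Omega_0}>t-1) \geq \mathbb{P}_y(\tilde\tau>t-1)$, where $\tilde\tau$ is the first exit time from $\tilde{\Omega}$.

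The final ingredient is an explicit calculation in the flat half-strip. A separation-of-variables formula analogous to \eqref{kernel.straight} gives
$$
  \mathbb{P}_y(\tilde\tau>t-1)
  = \sum_{n=1}^{\infty} e^{-E_n(t-1)}\,\mathcal{J}_n(y_2)\int_{-a}^{a}\!\!\mathcal{J}_n(y_2')\,dy_2'\int_R^{\infty}\!\! \tilde p_{t-1}(y_1,y_1')\,dy_1',
$$
where $\tilde p$ is the one-dimensional heat kernel on $(R,\infty)$ with Dirichlet datum at $R$; the standard reflection-principle computation identifies the longitudinal factor with $\mathrm{erf}((y_1-R)/(2\sqrt{t-1})) \sim (y_1-R)/\sqrt{\pi(t-1)}$ for large $t$. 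Retaining only the $n=1$ term yields the desired bound $c_y(t-1)^{-1/2} e^{-E_1(t-1)}$, uniformly for $y$ in the compact closure of $A$. Combining with the previous Markov estimate supplies the lower bound on $\mathbb{P}_x(\tau_{\Omega_0}>t)$, and forming the quotient delivers the inequality $C\,t^{-1+\delta/2} \leq \tilde{C}_{B,\delta,x}\,t^{-1+\delta}$ for $t$ large (small $t$ being absorbed into the constant). The only non-routine point, and what I expect to be the main technical obstacle, is the rigorous identification of the process on $\Omega_0$ restricted to $\tilde{\Omega}\subset\Omega_0\setminus\Omega_R$ with planar Brownian motion in $\tilde{\Omega}$; this is precisely where \eqref{Ass.compact} and \eqref{consequence.compact.initial} enter, and one must appeal carefully to a comparison of the corresponding Dirichlet forms in order to legitimately drop the curvature-induced modifications of $H_K$ outside $\Omega_R$.
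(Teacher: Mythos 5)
Your proposal is correct and follows essentially the same route as the paper: divide by the survival probability $\mathbb{P}_x(\tau_{\Omega_0}>t)$, bound the numerator by Corollary~\ref{c:pointwise}, and obtain a lower bound $\mathbb{P}_x(\tau_{\Omega_0}>t)\gtrsim t^{-1/2}e^{-E_1 t}$ by inserting an extra Dirichlet boundary at $\partial\Omega_R$ and computing explicitly in the flat half-strip, exactly as in the proof of Theorem~\ref{t:main}. Your Markov-property-at-time-one step is a nice way to cover initial points $x\in\Omega_R$ that the paper's terse sketch leaves implicit, but the underlying argument is the same.
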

\begin{proof}
This follows directly from Corollary \ref{c:pointwise}
together with the fact that for a suitable constant $c_{x}$
\begin{displaymath}
\mathbb{P}_x\bigl(\tau_{\Omega_0}>t\bigr) \geq c_x,.e^{-\lambda_Kt}\,t^{-\frac{1}{2}}.
\end{displaymath}
The latter assertion can be proved by adding a Dirichlet boundary as was done in the proof of Theorem \ref{t:main}.
\end{proof}
%

\section{Conclusions}\label{Sec.end}
%
The objective of this paper was to investigate the interplay
between the curvature and the properties of Brownian motion
in the simplest non-trivial case,
when the ambient space is two-dimensional and the motion
in fact quasi-one-dimensional.
More precisely, we were interested in the large time behaviour
of the solution to the heat equation in tubular neighbourhoods
of unbounded geodesics in a two-dimensional Riemannian manifold,
subject to Dirichlet boundary conditions.

Our results are schematically summarized in Table~\ref{table}.
The corresponding precise statements can be found in:
Propositions~\ref{Prop.straight} and~\ref{Prop.critical}
for flat manifolds;
Corollary~\ref{c:cor-pos-curv}
for positively curved manifolds;
and Theorem~\ref{t:main} for negatively curved manifolds.
The moral of the story is that the negative curvature
is `better for travelling', in the sense that the heat semigroup
gains an extra polynomial, geometrically induced decay rate.
The latter is in fact a consequence of the existence
of Hardy-type inequalities in negatively curved manifolds,
which play a central role in our proof. Though the proofs are mainly analytic
some effort has been made in order to connect our findings
with notions and results
available in the probabilistic literature,
\eg\ on Markov chains.

The present paper can be considered as a contribution
to recent works on the consequences
of the existence of Hardy inequality on large-time
behaviour of the heat semigroup
for quantum waveguides
\cite{KZ1,KZ2,GKP,KW}
and magnetic Schr\"odinger operators
\cite{Kovarik_2010,K7}.
More generally, recall that we expect that there is always
an improvement of the decay rate for the heat semigroup
of an operator satisfying a Hardy-type inequality
(\cf~\cite[Conjecture in Sec.~6]{KZ1} and~\cite[Conjecture~1]{FKP}).
The present paper confirms the general conjecture in the particular case
of the Dirichlet Laplace-Beltrami operator in the strip-like surfaces.
As pointed out in the body of the paper,
the Hardy inequality is essentially equivalent to transience properties.
Thus it is reasonable to expect that a combination
of available probabilistic and analytic methods
might be necessary in order to make progress towards a solution
of the above mentioned conjectures.
\medskip \\
\textit{Open Problem:}
One of the characteristic hypotheses of the present paper was that
the curvature~$K$ has compact support.
We expect the same decay rates if this assumption
is replaced by its fast decay at infinity.
However, it is quite possible that a slow decay of curvature
at infinity will improve the decay of the heat semigroup even further.
In particular, can~$\Gamma_K$ be strictly greater than~$3/4$
if~$K$ decays to zero very slowly at infinity?

\subsection*{Acknowledgement}
M.K. would like to thank Laurent Saloff-Coste for clarifying remarks concerning
his work \cite{LSC} and the continuity of transition kernels.
This work has been partially supported by
the Czech Ministry of Education, Youth, and Sports
within the project LC06002 and by the GACR grant No.~P203/11/0701.

%
\addcontentsline{toc}{section}{References}

\begin{thebibliography}{10}

\bibitem{Brezis_FR}
H.~Br{\'e}zis, \emph{{Analyse fonctionnelle: Th\'eorie et applications}},
  Dunod, 2002.

\bibitem{Davies}
E.~B. Davies, \emph{Spectral theory and differential operators}, Camb. Univ
  Press, Cambridge, 1995.

\bibitem{Dav}
E. B. Davies, \emph{$L^p$ spectral independence and $L^1$ analyticity},
J. London Math. Soc. \textbf{52} (1995), 177--184

\bibitem{DDI}
Y.~Dermenjian, M.~Durand, and V.~Iftimie, \emph{Spectral analysis of an
  acoustic multistratified perturbed cylinder}, Commun. in Partial Differential
  Equations \textbf{23} (1998), no.~1{\&}2, 141--169.

\bibitem{Duro-Zuazua_1999}
G.~Duro and E.~Zuazua, \emph{Large time behavior for convection-diffusion
  equations in {$\mathbb{R}^N$} with asymptotically constant diffusion},
  Commun. in Partial Differential Equations \textbf{24} (1999), 1283--1340.

\bibitem{EKK}
T.~Ekholm, H.~Kova{\v{r}}{\'\i}k, and D.~Krej\v{c}i\v{r}\'{\i}k, \emph{A
  {H}ardy inequality in twisted waveguides}, Arch. Ration. Mech. Anal.
  \textbf{188} (2008), 245--264.

\bibitem{Escobedo-Kavian_1987}
M.~Escobedo and O.~Kavian, \emph{Variational problems related to self-similar
  solutions of the heat equation}, Nonlinear Anal.-Theor. \textbf{11} (1987),
  1103--1133.

\bibitem{Escobedo-Zuazua_1991}
M.~Escobedo and E.~Zuazua, \emph{Large time behavior for convection-diffusion
  equations in {$R^N$}}, J. Funct. Anal. \textbf{100} (1991), 119--161.

\bibitem{FKP}
M.~Frass, D.~Krej\v{c}i\v{r}\'{\i}k, and Y.~Pinchover, \emph{On some strong
  ratio limit theorems for heat kernels}, Discrete Contin. Dynam. Systems A
  \textbf{28} (2010), 495--509.

\bibitem{Gray}
A.~Gray, \emph{Tubes}, Addison-Wesley Publishing Company, New York, 1990.

\bibitem{Griffiths}
D.~J. Griffiths, \emph{Introduction to quantum mechanics}, Prentice Hall, Upper
  Saddle River, NJ, 1995.

\bibitem{Grig}
A. Grigoryan, \emph{Analytic and geometric background of recurrence and non-explosion of the Brownian motion on Riemannian manifolds},
Bull. Amer. Math. Soc. (N.S.) \textbf{36} (1999), 135--249.

\bibitem{GSL} A. Grigor'yan and L. Saloff-Coste, \emph{Hitting probabilities for Brownian motion on Riemannian manifolds}, J. Math. Pures Appl. \textbf{81} (2002), no. 2, 115--142.

\bibitem{GKP} G.~Grillo, H.~Kova\v{r}\'ik and Y.~Pinchover,
\emph{Sharp two-sided heat kernel estimates of twisted tubes and applications},
submitted; preprint on arXiv:1105.0842v1 [math.AP] (2011).

\bibitem{Hartman_1964}
P.~Hartman, \emph{Geodesic parallel coordinates in the large}, American Journal
  of Mathematics \textbf{86} (1964), 705--727.

\bibitem{HV} R. Hempel and J. Voigt,
\emph{On the $L_p$-spectrum of Schr\"odinger operators}, J. Math. Anal. Appl. \textbf{121} (1987), 138--159

\bibitem{Hsu} E. P. Hsu and G. Qin, \emph{Volume growth and escape rate of Brownian motion on a complete manifold}, Ann. Probab. \textbf{38} (2010), 1570--1582

\bibitem{Kato}
T.~Kato, \emph{Perturbation theory for linear operators}, Springer-Verlag,
  Berlin, 1966.

\bibitem{Kendall1} W. S. Kendall, \emph{Brownian motion, negative curvature, and harmonic maps}, Stochastic integrals ({P}roc. {S}ympos., {U}niv. {D}urham, {D}urham, 1980) {Lecture Notes in Math.}, {851}, {479--491}, {Springer}, {Berlin}, {1981},

\bibitem{Kendall2} W. S. Kendall, \emph{Stochastic differential geometry: an introduction}. Acta Appl. Math. \textbf{9} (1987), 29--60.

\bibitem{KW} M. Kolb and A. W\"ubker, \emph{Brownian motion in twisted Dirichlet-Neumann tubes}, preprint (2011), submitted

\bibitem{Kovarik_2010}
H.~Kova\v{r}{\'\i}k, \emph{{Heat kernels of two dimensional magnetic
  Schr\"odinger and Pauli operators}},
  Calc. Var. Partial Differential Equations, to appear;
  preprint on arXiv:1007.1851v1 [math-ph] (2010).

\bibitem{Krej1}
D.~Krej\v{c}i\v{r}\'{\i}k, \emph{Quantum strips on surfaces}, J.~Geom. Phys.
  \textbf{45} (2003), no.~1--2, 203--217.

\bibitem{K3}
D.~Krej\v{c}i\v{r}\'{\i}k,
\emph{Hardy inequalities in strips on ruled surfaces}, J. Inequal.
  Appl. \textbf{2006} (2006), Article ID 46409, 10 pages.

\bibitem{K6-with-erratum}
D.~Krej\v{c}i\v{r}\'{\i}k, \emph{Twisting versus bending in quantum
  waveguides}, Analysis on Graphs and its Applications, Cambridge, 2007
  (P.~Exner et~al., ed.), Proc. Sympos. Pure Math., vol.~77, Amer. Math. Soc.,
  Providence, RI, 2008, pp.~617--636. See arXiv:0712.3371v2 [math--ph] (2009)
  for a corrected version.

\bibitem{K7}
D.~Krej\v{c}i\v{r}\'{\i}k, \emph{{The improved decay rate for the heat
  semigroup with local magnetic field in the plane}}, submitted; preprint on
  arXiv:1101.1806 [math.AP] (2011).

\bibitem{KKriz}
D.~Krej\v{c}i\v{r}\'{\i}k and J.~K\v{r}\'{\i}\v{z}, \emph{On the spectrum of
  curved quantum waveguides}, Publ.~RIMS, Kyoto University \textbf{41} (2005),
  no.~3, 757--791.

\bibitem{KZ1}
D.~Krej\v{c}i\v{r}\'{\i}k and E.~Zuazua, \emph{The {H}ardy inequality and the
  heat equation in twisted tubes}, J. Math. Pures Appl. \textbf{94} (2010),
  277--303.

\bibitem{KZ2}
D.~Krej\v{c}i\v{r}\'{\i}k and E.~Zuazua,
\emph{The asymptotic behaviour of the heat equation in a twisted
  {D}irichlet-{N}eumann waveguide}, J. Differential Equations \textbf{250}
  (2011), 2334--2346.

\bibitem{McGO} I. McGillivray and E. M. Ouhabaz, \emph{Some spectral properties of recurrent semigroups}, Arch. Math. \textbf{66} (1996), 233--242

\bibitem{Ne} R. Neel, \emph{The small time asymptotics of the heat kernel at the cut locus}, Comm. Anal. Geom. \textbf{15} (2007), 845--890

\bibitem{N}
J. R. Norris, \emph{Long-Time Behavior of Heat Flow: Global Estimates and exact Asymptotics}, Arch. Rational Mech. Anal.
\textbf{140} (1997), 161--195

\bibitem{Ouh} E.-M. Ouhabaz, \emph{Invariance of closed convex sets and domination criteria for semigroups}, Potential Anal. \textbf{5} (1996), 611--625.

\bibitem{Pinchover_2004}
Y.~Pinchover, \emph{Large time behavior of the heat kernel},
J. Funct. Anal. \textbf{206} (2004), 191--209.

\bibitem{Pinchover_2007}
Y.~Pinchover, \emph{Topics in the theory of positive solutions of second-order
  elliptic and parabolic partial differential equations}, Spectral Theory and
  Mathematical Physics: A Festschrift in Honor of Barry Simon's 60th Birthday
  ({F.~Gesztesy, et al.}, ed.), Proc. Sympos. Pure Math., vol.~76, Amer. Math.
  Soc., Providence, RI, 2007, pp.~329--356.

\bibitem{Pin}  R. G. Pinsky, \emph{Positive harmonic functions and diffusion}, Cambridge Studies in Advanced Mathematics, 45. Cambridge University Press, Cambridge, 1995. xvi+474 pp.

\bibitem{LSC} L. Saloff-Coste,
\emph{Uniformly elliptic operators on Riemannian manifolds},
J. Differential Geometry, \textbf{36} (1992), 417--450

\bibitem{Sim3} B. Simon,
\emph{ Brownian motion, $L^{p}$ properties of Schr\"odinger operators
and the localization of binding},
J. Funct. Anal. \textbf{35} (1980), 215--229.

\bibitem{Sim2} B. Simon, \emph{Large Time Behavior of the $L^p$ Norm of
Schr\"odinger Semigroups}, J. Funct. Anal. \textbf{40} (1981), 66--83

\bibitem{Sim} B. Simon,
\emph{Large time behavior of the heat kernel: on a theorem of Chavel and Karp},
Proc. Amer. Math. Soc. \textbf{118} (1993), 513--514.

\bibitem{Spivak4}
M.~Spivak, \emph{A comprehensive introduction to differential geometry},
  vol.~IV, Publish or Perish, Boston, Mass., 1975.

\bibitem{StI}
K. T. Sturm, \emph{Analysis on local Dirichlet Spaces -- I. Recurrence, conservativeness and $L^p$-Liouville}, J. reine angew. Math. \textbf{456} (1994), 173--196

\bibitem{StII}
K. T. Sturm, \emph{Analysis on local Dirichlet Spaces -- II. Upper Gaussian Estimats for the Fundamential Solutions of Parabolic Equations}, Osaka J. Math \textbf{32} (1995), 275--312

\bibitem{TT} P. Tuominen and R. L. Tweedie, \emph{Exponential Decay and Ergodicity of General Markov
Processes and Their Discrete Skeletons}, Adv. in Appl. Probab. \textbf{11} (1979), 784--803

\bibitem{T74b} R. L. Tweedie, $R$-theory for Markov chains on a general state space. II. $R$-subinvariant measures for $R$-transient chains. \textit{Ann. Probability} \textbf{2} (1974), 865--878.

\bibitem{T74a} R. L. Tweedie, $R$-theory for Markov chains on a general state space. I. Solidarity properties and $R$-recurrent chains. \textit{Ann. Probability} \textbf{2} (1974), 840--864.

\bibitem{Weid} J. Weidmann, \emph{Lineare Operatoren im Hilbertraum: Teil 1 Grundlagen}, Teubner, Stuttgart, 2000
\end{thebibliography}
%
\providecommand{\bysame}{\leavevmode\hbox to3em{\hrulefill}\thinspace}
\providecommand{\MR}{\relax\ifhmode\unskip\space\fi MR }
\providecommand{\MRhref}[2]{%
  \href{http://www.ams.org/mathscinet-getitem?mr=#1}{#2}
}
\providecommand{\href}[2]{#2}

\end{document}